%------------------------------------------------------------------------------
% Beginning of journal.tex
%------------------------------------------------------------------------------
%
% AMS-LaTeX version 2 sample file for journals, based on amsart.cls.
%
%        ***     DO NOT USE THIS FILE AS A STARTER.      ***
%        ***  USE THE JOURNAL-SPECIFIC *.TEMPLATE FILE.  ***
%
% Replace amsart by the documentclass for the target journal, e.g., tran-l.
%
\documentclass{amsart}
\newtheorem{mtheorem}{Main Theorem}[section]
\newtheorem{theorem}{Theorem}[section]
\newtheorem{lemma}[theorem]{Lemma}
\newtheorem{cor}[theorem]{Corollary}
\theoremstyle{definition}
\newtheorem{definition}[theorem]{Definition}
\newtheorem{pro}[theorem]{Proposition}

\theoremstyle{remark}
\newtheorem{remark}[theorem]{Remark}

\numberwithin{equation}{section}

%    Absolute value notation

%    Blank box placeholder for figures (to avoid requiring any
%    particular graphics capabilities for printing this document).

\begin{document}

\title[AN ANCIENT SOLUTION OF THE RICCI FLOW IN DIMENSION 4]{AN ANCIENT SOLUTION OF THE RICCI FLOW IN DIMENSION 4 CONVERGING TO THE EUCLIDEAN SCHWARZSCHILD METRIC}

%    Information for first author
\author{Ryosuke Takahashi}
%    Address of record for the research reported here
\address{Department of Mathematics, Harvard University, Cambridge, Massachusetts 02138}
%    Current address
\email{tryo@math.harvard.edu}
%    \thanks will become a 1st page footnote.

%    Information for second author

%    General info

\date{}

\keywords{Euclidean Schwarzschild metrics, Ricci flows, ancient solutions.}

\begin{abstract}
In this paper, we prove the existence of an ancient solution to the Ricci flow whose limit at $t=-\infty$ is the Euclidean Schwarzschild metric.
\end{abstract}

\maketitle

\section{Introduction}

 In this paper, we consider the Euclidean Schwarzschild metric which is defined on $M=[0, 4\pi]\times (1,\infty)\times \mathbf{S}^2$:
\[
g_0=(1-r^{-1})dt^2+(1-r^{-1})^{-1}dr^2+r^2d\Omega^2.
\]
This metric is obtained from the Schwarzschild metric [10] by changing the time coordinate into $\tau=-it$ and taking black-hole mass to be $\frac{1}{2}$. We also have $t$ is periodic with period $4\pi$. Therefore, this manifold is diffeomorphic to $\mathbf{S}^1\times \mathbb{R}^1\times \mathbf{S}^2$ and its metric is asymptotic to the flat metric on $\mathbf{S}^1\times\mathbb{R}^3$ as $r\rightarrow \infty$, see [4], [7].\\

Since $g_0$ is a Ricci flat metric (cf. section 2), it is a solution of the vacuum Einstein equation $R_{ij}-\frac{1}{2}g_{ij}R=0$ and $g_0$ can be regarded as a critial point of the Einstein-Hilbert action 
\[
S(g)=-\frac{1}{2\kappa}\int R\sqrt{g}dtdrd\Omega,
\]
where $R$ is the scalar curvature of $g$ and $\kappa$ is a constant. This metric is unstable, i.e. the bilinear form
\[
B[h,k]=\frac{\partial}{\partial s}\frac{\partial}{\partial t}S(g_0+sh+tk)|_{s=t=0}
\]
which is defined on $C_{0}^{\infty}(Sym^2(T^*M))\times C_{0}^{\infty}(Sym^2(T^*M))$ is not positive semi-definite. Here $C_{0}^{\infty}(Sym^2(T^*M))$ is the set of symmetric $C^{\infty}$ 2-tensors $|h|$ such that $|h|\rightarrow 0$ uniformly as $r\rightarrow 1$ and $r\rightarrow \infty$. This means there exists $h$ such that $B[h,h]<0$.\\

 In [4], Allen argued the instability of $g_0$ by showing that there exist a negative eigenvalue $\lambda$ and a corresponding eigenvector $h \in C_{0}^{\infty}(Sym^2(T^*M))$ which satisfy
\begin{align}\label{ee}
\frac{\partial}{\partial \delta}(2Ric(g_0+\delta h))|_{\delta=0}=\lambda h.
\end{align}
This implies that $B[h,h]<0$ by computing second variation of $S$ directly. However, [4] doesn't provide a rigorous proof to show the existence of $h$, so we shall give a proof in this paper.\\

The existence of $h$ tells us how to evolve the Euclidean Schwarzschild metric to an ancient solution of Ricci flow. Roughly speaking, by equation (\ref{ee}), we have $-2Ric(g_0+\delta h)= -\lambda \delta h + o(\delta)$. So if we take $\delta=e^{-\lambda t}$, we will have
\[
\frac{\partial}{\partial t}(g_0+\delta h)=-\lambda\delta h = -2Ric(g_0+\delta h)+o(\delta).
\]
This means that $(g_0+\delta h)$ gives us the first order approximation of a Ricci flow. \\

The main purpose of this paper is to find an ancient solution of Ricci flow $g$ with $g(x,-\infty)=g_0$ and $g(x,t)$ will flow along the direction given by $h$ when $t$ tends to $-\infty$.

\begin{mtheorem}
There exist $N\in \mathbb{R}$ and an ancient solution $g$ satisfying
\begin{align*}
\frac{\partial}{\partial t}g(x,t)&=-2Ric(x,t)\mbox{ for all } x\in M \mbox{ and }t\in (-\infty,N],\\
 g(x,t)&\rightarrow g_0(x)\mbox{ uniformly as }t\rightarrow-\infty.
\end{align*}
\end{mtheorem}

\noindent {\bf Acknowledgement:} The author wants to thank professor Chang-Shou Lin and professor Chiun-Chuan Chen for their encouragement during past four years. He also wants to thank his colleagues: Chen-Yu Chi, Jie Zhou, Yi-Li and Yu-Shen Lin for their help on many aspects. Finally, the author wants to thank his advisor Clifford Taubes, who gives him so much help.\\ 

\section{Basic Computations:}
Let $(M,g_0)$ be the Euclidean Schwarzschild metric. In this section, we compute its Christoffel symbols and curvature tensors. We have:
\[
\Gamma^1_{00}=-\frac{1}{2}(1-r^{-1})\frac{1}{r^2}=\frac{-(1-r^{-1})}{2r^2}, \mbox{ }\Gamma^{0}_{01}=\frac{1}{2}(1-r^{-1})^{-1}\frac{1}{r^2}=\frac{1}{2(1-r^{-1})r^2}
\]
\[
\Gamma^1_{11}=\frac{-1}{2(1-r^{-1})r^2}, \mbox{ }\Gamma^2_{12}=\frac{1}{r}, \mbox{ }\Gamma^3_{13}=\frac{1}{r}
\]
\[
\Gamma^1_{22}=\frac{-1}{2}(1-r^{-1})(2r)=-(1-r^{-1})r, \mbox{ }\Gamma^3_{23}=\cot\theta
\]
\[
\Gamma^1_{33}=-(1-r^{-1})r\sin^2\theta, \mbox{ }\Gamma^2_{33}=-\sin \theta\cos \theta
\]
and
\[
R_{0101}=r^{-3}, \mbox{ }R_{0202}=-\frac{1}{2}(1-r^{-1})r^{-1}
\]
\[
R_{0303}=-\frac{1}{2}(1-r^{-1})r^{-1}\sin^2\theta, \mbox{ }R_{1212}=-\frac{1}{2(1-r^{-1})r}
\]
\[
R_{1313}=-\frac{1}{2(1-r^{-1})r}\sin^2\theta, \mbox{ }R_{2323}=r\sin^2\theta
\]
and
\[
R_{ij}=0\mbox{ }\mbox{ }\mbox{ }\forall i,j=0,1,2,3.
\]

It is well known that this metric has a bounded Riemannian curvature, that is
\[
|\sum_{\alpha,\beta,\gamma,\delta,\lambda,\mu,\nu,\xi}g^{\alpha\beta}g^{\gamma\delta}g^{\lambda\mu}g^{\nu\xi}R_{\alpha\gamma\lambda\nu}R_{\beta\delta\mu\xi}|=|Rm|^2<C
\]
Here we have: $\sum_{\alpha,\beta,\gamma,\delta,\lambda,\mu,\nu,\xi}g^{\alpha\beta}g^{\gamma\delta}g^{\lambda\mu}g^{\nu\xi}R_{\alpha\gamma\lambda\nu}R_{\beta\delta\mu\xi}= \sum_{i,j}g^{ii}g^{ii}g^{jj}g^{jj}R_{ijij}R_{ijij}$ (We use summation convention in all following paragraphs). Actually, we have a stronger conclusion: Observing that $|R_{ijij}|=Cr^{-3}g_{ii}g_{jj}$, so all sectional curvatures satisfy the inequality $|K_{ij}|\leq r^{-3}$. Moreover, if $h= h_{00} dt^2+h_{11}dr^2+ h_{22} d\theta^2 +h_{33}d\varphi^2$ is a 2-tensor, we have
\begin{align}\label{ie1}
R^{ijij}h_{ii}h_{jj} \leq \frac{1}{2}[r^{-3}(g^{ii})^2h_{ii}^2+r^{-3}(g^{jj})^2h_{jj}^2]\leq r^{-3}|h|^2.
\end{align}

\begin{remark} Sometime we change our coordinate by taking $p^2=(1-r^{-1})$, $p\in(0,1)$. In the new coordinate
\[
g_0=p^2dt^2+\frac{1}{4}\frac{1}{(1-p^2)^2}dp^2+\frac{1}{(1-p^2)^2}d\Omega^2.
\]  
This coordinate has several advantages. First of all, $g_{\alpha\beta}$ are finite near $p=0$ locus. Secondly, we have our $M$ is a punctured disk $\mathbf{S}^1\times (0,1)$ times a $\mathbf{S}^2$ now (That is, $r=1$ locus will be a point). To avoid confusion, we call this coordinate the $p$-coordinate.
\end{remark}

\section{Eigenvectors with Negative Eigenvalues: }
In this section, we prove the existence of solution of a modified equation (equation (\ref{mee})). 

\subsection{Ricci-de Turck flows and modified eigenvectors: } Recall that we want to prove the existence of $h$ which satisfies
\[
\frac{\partial}{\partial \delta}(-2Ric(g_0+\delta h))|_{\delta=0}=-\lambda h.
\]
The left hand side of this equation can be expressed as
\begin{align}\label{ee1}
(\Delta_Lh)_{kl}+\nabla_{k}\nabla_{l}H+\nabla_{k}(\zeta h)_l+\nabla_{l}(\zeta h)_k.
\end{align}
where $H$ is the trace of $h$, $(\zeta h)_k$ is defined by $-g^{ij}\nabla_ih_{jk}$ and the $\mathbf{Lichnerowicz}$  $\mathbf{Laplacian}$ $\Delta_L$ is defined by
\[
(\Delta_Lh)_{jk}=\Delta h_{jk}+2R^{r\mbox{ }p}_{\mbox{ }j\mbox{ }k}h_{rp}-g^{pq}R_{jp}h_{qk}-g^{pq}R_{kp}h_{jq},
\]
see [3]. So our equation becomes
\begin{align}\label{ee2}
(\Delta_Lh)_{kl}+\nabla_{k}\nabla_{l}H+\nabla_{k}(\zeta h)_l+\nabla_{l}(\zeta h)_k=-\lambda h_{kl}.
\end{align}

Ideally,  we want to use variational approach to obtain a solution $h$. This means we need to find a functional
\[
\mathbf{\hat{a}}:C_{0}^{\infty}(Sym^2(T^*M))\rightarrow \mathbb{R}
\]
such that the 
minimizer of $\mathbf{\hat{a}}$ over $C_{0}^{\infty}(Sym^2(T^*M))\cap\{\|h\|=1\}$ will satisfy the Euler-Lagrange equation $(\Delta_Lh)_{kl}+\nabla_{k}\nabla_{l}H+\nabla_{k}(\zeta h)_l+\nabla_{l}(\zeta h)_k=-\lambda h_{kl}$. Unfortunately, we don't know how to find this $\mathbf{\hat{a}}$.\\

However, if we consider a simpler equation
\begin{align}\label{mee}
(\Delta_Lh)_{kl}=-\lambda h_{kl},
\end{align}
we know how to define a functional $\mathbf{a}$ whose Euler-Lagrange equation is (\ref{mee}) (the formula of $\mathbf{a}$ will appear in the next subsection). Moreover, equations (\ref{mee}) and (\ref{ee2}) are related. The relation between these two equations can be explained by de Turck's argument [5]. In following paragraphs, we show how to connect these two equation by de Turck's method.\\

Here we follow notations of section 1. We claim the following statement: suppose that we have an ancient solution of Ricci flow $\hat{g}(x,t)$, which is defined on $M\times (-\infty, N]$ for some $N\in \mathbb{R}$, such that after changing the time variable by taking $t=\frac{1}{-\lambda}\log(\delta)$ we have $\hat{g}(x,\delta)=g_0+\delta h+o(\delta)$. Then there exist a family of diffeomorphisms $\varphi_\delta: M \rightarrow M$, $\varphi_{0}=id$ such that if we write $g=\varphi_\delta^*(\hat{g})=g_0+\delta \bar{h} +o(\delta)$, $\bar{h}$ will satisfy equation (\ref{mee}). Conversely, if we have a $\bar{h}$ satisfying (\ref{mee}) and a $g=\varphi_\delta^*(\hat{g})=g_0+\delta \bar{h} +o(\delta)$ which is a Ricci-de Turck flow, then we can find a $h$ satisfying (\ref{ee2}) and a Ricci flow $\hat{g}$ by constructing a family of diffeomorphisms. \\

 For this family of diffeomorphisms $\varphi_\delta: M \rightarrow M$, we define $y(x,\delta)=\varphi_\delta(x)$. The Ricci flow equation implies
\[
\frac{\partial}{\partial t}g_{ij}=-2R_{ij}+\nabla_iV_j+\nabla_jV_i,
\]
where $V_i=\frac{\partial y^\alpha}{\partial t}\frac{\partial x^k}{\partial y^\alpha}g_{ik}$.\\

Following the idea of de Turck, we need to find a $y$ which is governed by
\begin{align}\label{ode}
&\frac{\partial y^\alpha}{\partial t}=\frac{\partial y^\alpha}{\partial x^k}g^{jl}(\Gamma^k_{jl}-\dot{\Gamma}^k_{jl})\\
&\lim_{t\rightarrow -\infty}y^\alpha(x, t)=x^\alpha
\end{align}
where $\dot{\Gamma}^k_{jl}$ are the Christoffel symbols of $g_0$. Since the difference between two connections is a tensor, we have $A^k_{jl}=(\Gamma^k_{jl}-\dot{\Gamma}^k_{jl})$ is a (2,1)-tensor. So the equation (\ref{ode}) is coordinate free. Equation (\ref{ode}) is not a standard initial value problem of first order PDEs, but we can still prove that the solution exists in our case. We will show the existence of solution in section 6.2.\\

Now we suppose that $y$ exists for $t\in(-\infty, N]$, then $V_i=g_{ik}g^{nl}(\Gamma^k_{nl}-\dot{\Gamma}^k_{nl})$. Therefore, we have
\[
\frac{\partial}{\partial \delta}(\frac{\partial}{\partial t}g_{ij})|_{\delta=0}=\frac{\partial}{\partial \delta}(\frac{\partial}{\partial \delta}g_{ij}\lambda\delta)|_{\delta=0}=-\lambda\frac{\partial}{\partial t}g_{ij}|_{\delta=0}
\]
which will equal
\[
\frac{\partial}{\partial \delta}(-2R_{ij}+\nabla_iV_j+\nabla_jV_i)|_{\delta=0}.
\]
Notice that
\[
\frac{\partial}{\partial \delta}V_i|_{\delta=0}=g_{ik}g^{nl}(\frac{\partial}{\partial \delta}\Gamma^k_{nl})|_{\delta=0}
\]
Here we can get
\[
\frac{\partial}{\partial \delta}\Gamma^k_{nl}|_{\delta=0}=\frac{1}{2}g^{kp}(\nabla_n\bar{h}_{pl}+\nabla_l\bar{h}_{np}-\nabla_p\bar{h}_{nl})|_{\delta=0}.
\]
So
\[
g_{ik}g^{nl}(\frac{\partial}{\partial \delta}\Gamma^k_{nl})|_{\delta=0}=\frac{1}{2}g^{nl}(\nabla_n\bar{h}_{il}+\nabla_l\bar{h}_{ni}-\nabla_i\bar{h}_{nl})|_{\delta=0}= \frac{1}{2}[-2(\zeta \bar{h})_i-\nabla_i\bar{H}]
\]
which implies
\[
\nabla_j(\frac{\partial}{\partial\delta}V_i)|_{\delta=0}= -\nabla_j(\zeta \bar{h})_i -\frac{1}{2}\nabla_j\nabla_i \bar{H}.
\]
Similarly, we have
\[
\nabla_i(\frac{\partial}{\partial\delta}V_j)|_{\delta=0}= -\nabla_i(\zeta \bar{h})_j -\frac{1}{2}\nabla_i\nabla_j \bar{H}.
\]
These show that we can use de Turck's method to eliminate $\nabla_{k}\nabla_{l}\bar{H}+\nabla_{k}(\zeta \bar{h})_l+\nabla_{l}(\zeta \bar{h})_k$. Therefore, $\bar{h}$ is a eigenvector of the Linchnerowicz Laplacian
\[
(\Delta_L\bar{h})_{kl}=-\lambda \bar{h}_{kl}.
\]

We will only focus on equation (\ref{mee}) in this paper. For convenience, we replace $\bar{h}$ by $h$ in following paragraphs.\\

\subsection{Existence of modified eigenvector:}
In this section, we prove the existence of $h$ in (\ref{mee}). First of all, since $g_0$ is a Ricci flat metric, we can simplify $(\Delta_Lh)$ as follows
\[
(\Delta_Lh)_{kl}=\Delta h_{kl}+2R^{r\mbox{ }p}_{\mbox{ }k\mbox{ }l}h_{rp}.
\] 
Now, if we define a functional $\mathbf{a}$ by
\begin{align}\label{funl}
\mathbf{a}(h)=\int_M |\nabla h|^2-2R^{\alpha\mu\beta\nu}h_{\alpha\beta}h_{\mu\nu},
\end{align}
equation (\ref{mee}) will be the variational equation for (\ref{funl}). Our goal is finding the minimizer of this functional. Here we set up some notations.\\

\noindent {\bf Notations 1 :} In this article, the integration uses the following simplified notation
\[
\int_M f=\int_M f \sqrt{g_0} dt\wedge dr\wedge d\Omega.
\]
Here $f$ is a function. Also, sometime we use $\dot{g}$ to denote $g_0$ because the lower 0 may be cumbersome when we have some other indices.\\

\begin{definition} We use $C^{\infty}(Sym^2(T^*M))$ and $C^{\infty}_c(Sym^2(T^*M))$ to denote the space of symmetric smooth 2-tensors and the space of compactly supported symmetric smooth 2-tensors. Let $|h|=|\dot{g}^{ij}\dot{g}^{kj}h_{ik}h_{jl}|^{\frac{1}{2}}$ for any $h\in C^{\infty}(Sym^2(T^*M))$. Also, we define $|\nabla h|=|\dot{g}^{ij}\dot{g}^{kl}\dot{g}^{pq}\nabla_ph_{ik}\nabla_qh_{jl}|^{\frac{1}{2}}$. We will also use the following weighted norms
$\|h\|_{\mathbb{H}^1}=(\int_M|\nabla h|^2)^{\frac{1}{2}}+(\int_Mr^{-2}|h|^2)^{\frac{1}{2}}$ and 
$\|h\|_{\mathbb{H}^{0}}=(\int_Mr^{-2}|h|^2)^{\frac{1}{2}}$. 
\end{definition}

Here we choose the function space. First we notice that $M\simeq \mathbb{R}\times \mathbf{S}^2$. We let $\mathbf{SO}(3)$ be the rotation group acting on $\mathbf{S}^2$ part. We can only choose those 2-tensors which are invariant under this action. That means we choose all $h$ such that $\mathbf{g}^*(h)=h$ for all $\mathbf{g}\in \mathbf{SO}(3)$. We call a tensor is radially symmetric if it satisfies this property. Moreover, we have $\mathbf{a}(\mathbf{g}^*(h))=\mathbf{a}(h)$ for all $\mathbf{g}\in \mathbf{SO}(3)$. Now, because our metric $g_0$ is radially symmetric, by [9], we can see that the critical point(minimizer) will be radially symmetric, too. 

\begin{definition}
We define space $\mathbb{H}^1$ as
\[
\mathbb{H}^1=\mbox{closure of }\{ h\in C^{\infty}_c(Sym^2(T^*M)|\mbox{ } h \mbox{ is radially symmetric and } \|h\|_{\mathbb{H}^1}<\infty \}.
\]
 We also write the $L^2$-norm of $h$ as $\|h\|_{2}=(\int_M|h|^2)^{\frac{1}{2}}$.
\end{definition} 

Before we start our proof of the existence, we need some lemmas. We make some remarks here. First of all, we consider $\mathbb{H}^1$-norm under $p$-coordinate. In this case, the $\mathbb{H}^1$-norm controls the integral $\int_M|\nabla h|^2$. Assuming that $h$ is radially symmetric (and recall that $\sqrt{g_0}=O(p)$ near $p=0$), the boundedness of $\int_M|\nabla h|^2$ near $p=0$ implies the boundedness of $\int_0^{\frac{1}{2}}(\frac{\partial}{\partial p}|h|)^2pdp$. Since there is a weighted function $p$ in our integral, we cannot apply Sobolev embedding directly and get $|h|\in C^{0,\frac{1}{2}}$.\\

However, if we have a sequence of radially symmetric 2-tensors 
\[
h^{(z)}\in  C^{\infty}_c(Sym^2(T^*M)),  z\in \mathbb{N}
\]
which are uniformly $\mathbb{H}^1$-bounded, then $(\int_0^{\frac{1}{2}}(\frac{\partial}{\partial p}|h^{(z)}|)^2\sqrt{g_0}dp)^{\frac{1}{2}} + (\int_0^{\frac{1}{2}}|h^{(z)}|^2\sqrt{g_0}dp)^{\frac{1}{2}}$ are uniformly bounded.  So we can find a subsequence of $\{h^{(z)}\}$ and $h\in \mathbb{H}^1$ such that $ (\int_0^{\frac{1}{2}}(|h^{(z)}|-|h|)^2\sqrt{g_0}dp)^{\frac{1}{2}}\rightarrow 0$.  Therefore we have the following lemma. Notice that $r=\frac{4}{3}$ when $p=\frac{1}{2}$

\begin{lemma}
Let $\{h^{(z)}\in C^{\infty}_c(Sym^2(T^*M))$, $z\in \mathbb{N}\}$ be a sequence of radially symmetric 2-tensors satisfying $\|h^{(z)}\|_{\mathbb{H}^1}\leq C$. We also define $M_{\frac{4}{3}}=M\cap\{r\leq \frac{4}{3}\}$. Then we can find  $h\in \mathbb{H}^1$ such that
\begin{align}
\int_{M_{\frac{4}{3}}}(|h^{(z)}|-|h|)^2\rightarrow 0
\end{align}
as $z\rightarrow \infty$.
\end{lemma}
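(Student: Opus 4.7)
The plan is to recognize $M_{4/3}$ as the interior of a compact smooth four-manifold with boundary on which $g_0$ extends to a smooth Riemannian metric, and then reduce the lemma to the classical Rellich--Kondrachov compactness theorem. Working in the $p$-coordinate, the region $M_{4/3}$ corresponds to $\{p \leq 1/2\}$. Setting $s=r-1$, $\rho=2\sqrt{s}$, $\theta=t/2$, the near-bolt form of $g_0$ becomes $d\rho^2 + \rho^2\,d\theta^2 + d\Omega^2$; this is a smooth metric on $\mathbb{R}^2\times \mathbf{S}^2$ in polar coordinates precisely because $t$ has period $4\pi$, so that $\theta$ has period $2\pi$. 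Hence the closure $\overline{M}_{4/3}$ is a compact smooth four-manifold diffeomorphic to $D^2\times \mathbf{S}^2$, with boundary $\mathbf{S}^1\times \mathbf{S}^2$ at $\{p=1/2\}$, on which $g_0$ extends smoothly across the bolt $\{p=0\}$.

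On this compactification the weight $r^{-2}$ lies in $[9/16,1]$, and both $|h|^2=\dot{g}^{ij}\dot{g}^{kl}h_{ik}h_{jl}$ and $|\nabla h|^2$ are coordinate invariant; hence the restriction of $\|\cdot\|_{\mathbb{H}^1}$ to $M_{4/3}$ is equivalent to the standard Sobolev norm on $\overline{M}_{4/3}$ for sections of $Sym^2(T^*\overline{M}_{4/3})$. Each $h^{(z)}$ is supported in $\{p \geq p_z\}$ for some $p_z>0$, so it extends by zero to a smooth section on $\overline{M}_{4/3}$; the assumption $\|h^{(z)}\|_{\mathbb{H}^1}\leq C$ then gives a uniform $H^1(\overline{M}_{4/3})$ bound. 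I would then invoke Rellich--Kondrachov for sections of a smooth vector bundle on a compact Riemannian manifold with boundary to extract a subsequence $h^{(z_k)}$ converging strongly in $L^2(\overline{M}_{4/3}; Sym^2)$ to some $\tilde{h}$.

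To produce a bona fide limit in $\mathbb{H}^1$, I would use weak sequential compactness of the Hilbert space $\mathbb{H}^1$ to pass to a further subsequence with $h^{(z_k)} \rightharpoonup h$ weakly in $\mathbb{H}^1$; radial symmetry (i.e.\ $\mathbf{SO}(3)$-invariance) passes to the weak limit, so $h\in \mathbb{H}^1$, and uniqueness of weak limits identifies $h|_{M_{4/3}}=\tilde{h}$ in $L^2(M_{4/3})$. The pointwise inequality $\bigl||h^{(z_k)}|-|h|\bigr|\leq |h^{(z_k)}-h|$ would then give
\begin{equation*}
\int_{M_{4/3}} \bigl(|h^{(z_k)}|-|h|\bigr)^2 \leq \int_{M_{4/3}} |h^{(z_k)}-h|^2 \to 0.
\end{equation*}
The hard part will be the step of recognizing that despite the apparent degeneracy $\sqrt{g_0}=O(p)$ at the bolt --- exactly the issue that obstructs the direct Sobolev embedding flagged in the remark above --- this factor is nothing other than the polar-coordinate Jacobian on the smooth two-disk $\{(t,p)\}$. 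Once one passes to the smooth compactification, the four-dimensional Rellich--Kondrachov theorem applies with no weighted-Sobolev subtleties at all.
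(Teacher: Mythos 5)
Your proof is correct and follows essentially the same line of thought as the paper's (informal) justification preceding Lemma~3.3, but it supplies the precise mechanism that the paper only gestures at. The paper observes that for a radially symmetric sequence the $\mathbb{H}^1$ bound yields $\int_0^{1/2}(\partial_p|h^{(z)}|)^2\,p\,dp + \int_0^{1/2}|h^{(z)}|^2\,p\,dp \leq C$, notes that the weight $p$ blocks a naive one--dimensional Sobolev embedding, and then simply asserts the existence of a strongly $L^2(p\,dp)$--convergent subsequence together with an identification of its limit with $|h|$ for some $h\in\mathbb{H}^1$. You instead recognize the weight $\sqrt{g_0}=O(p)$ as the polar Jacobian of a smooth two--disk fibered over $\mathbf{S}^2$, so that $\overline{M}_{4/3}\cong D^2\times \mathbf{S}^2$ is a smooth compact manifold with boundary across which $g_0$ extends, and on it the $\mathbb{H}^1$ bound becomes a genuine $H^1$ bound (since $r^{-2}\in[9/16,1]$ there); Rellich--Kondrachov then applies without weights. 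Your separate use of weak compactness in $\mathbb{H}^1$ to produce the limiting tensor $h$, followed by uniqueness of weak limits to identify $h|_{M_{4/3}}$ with the strong $L^2$ limit, and the reverse triangle inequality $\bigl||h^{(z_k)}|-|h|\bigr|\leq |h^{(z_k)}-h|$, are exactly the steps the paper leaves implicit. In short: same underlying compactness phenomenon, but your compactification makes transparent why the weighted problem is in fact the unweighted one; the only thing you invoke without verification is the (standard) smoothness of the Euclidean Schwarzschild metric across the bolt, which is acceptable to take as known. Note also that both you and the paper actually produce the conclusion only along a subsequence; the lemma as literally stated for the full sequence cannot be true, so a ``subsequence'' qualifier is tacit on both sides.
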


Next, we prove a Hardy type inequality.
\begin{lemma}
Let $h \in C^{\infty}_c(Sym^2(T^*M))$ be a radially symmetric 2-tensor. Then we have
\begin{align}\label{hie}
\int_M  |\nabla h|^2 \geq \int_M \frac{|h|^2}{r^2}
\end{align}
\end{lemma}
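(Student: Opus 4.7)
The plan is to reduce the tensor inequality to a scalar one via Kato's inequality, and then prove the scalar version through a radial reduction plus a weighted integration by parts. Kato's inequality states that $|\nabla h|^2 \geq |\nabla |h||^2$ pointwise wherever $|h| > 0$. Setting $u = |h|$, which is Lipschitz, non-negative, radially symmetric, and compactly supported in $M$, it suffices to prove the scalar Hardy inequality $\int_M |\nabla u|^2 \geq \int_M u^2 / r^2$.

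Next, I would use the radial symmetry to reduce to one dimension. Since $\sqrt{g_0} = r^2 \sin\theta$, integrating over $t \in [0, 4\pi]$ and over $\mathbf{S}^2$ turns both sides into one-dimensional integrals:
\begin{equation*}
\int_M |\nabla u|^2 \, dV = 16\pi^2 \int_1^\infty r(r-1)\, u'(r)^2 \, dr, \qquad \int_M \frac{u^2}{r^2} \, dV = 16\pi^2 \int_1^\infty u(r)^2 \, dr.
\end{equation*}
The task then becomes proving $\int_1^\infty r(r-1)\,(u')^2 \, dr \geq \int_1^\infty u^2 \, dr$ for $u$ smooth with compact support in $r$ (note that $u$ need not vanish at $r = 1$, since $r = 1$ is a smooth interior point of $M$ in the $p$-coordinate). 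My approach to the 1D inequality is to complete the square: write
\begin{equation*}
0 \leq \int_1^\infty \bigl( \sqrt{r(r-1)}\, u' + \phi(r)\, u \bigr)^2 \, dr,
\end{equation*}
expand, and integrate the cross term by parts (using that $\sqrt{r(r-1)}\,\phi(r) \to 0$ at $r = 1$ and at $\infty$) to obtain $\int r(r-1)(u')^2 \, dr \geq \int \bigl[ (\sqrt{r(r-1)}\,\phi)' - \phi^2 \bigr] u^2 \, dr$; then choose $\phi$ so that the coefficient of $u^2$ is at least $1$.

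The hardest step will be finding a suitable weight $\phi$. The simplest Cauchy–Schwarz applied with $\phi(r) = r - 1$ produces only the weaker inequality with constant $1/4$, which is the sharp Hardy constant on the asymptotic $\mathbf{S}^1 \times \mathbb{R}^3$ end. To recover the stated constant $1$, I would expect to exploit the tensor structure that Kato's inequality discards: for a general radially symmetric $h$, the angular Christoffel symbols $\Gamma^2_{12} = \Gamma^3_{13} = 1/r$, together with $\Gamma^1_{22}$ and $\Gamma^1_{33}$, contribute positive terms to $|\nabla h|^2$ which, combined with the radial derivatives, should upgrade the estimate. Concretely, I would first write out $|\nabla h|^2$ for a radial diagonal $h$ in the Schwarzschild frame, identify the positive angular contributions, and integrate them by parts together with the radial Cauchy–Schwarz estimate to close the gap from $1/4$ up to $1$.
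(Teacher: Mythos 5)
Your observation that a pure scalar reduction can deliver at most the constant $1/4$ is correct, but the fallback you propose cannot rescue the constant $1$. The obstruction is that there is a radially symmetric $2$-tensor which behaves exactly like a scalar: take $h = u\,g_0$ for a compactly supported radial function $u$. Since $\nabla g_0 = 0$, one has $\nabla_p h_{ij} = (\partial_p u)(g_0)_{ij}$, whence $|h| = 2|u|$ and $|\nabla h| = 2|\nabla u|$, so the claimed inequality for this $h$ is precisely the scalar inequality $\int_M |\nabla u|^2 \geq \int_M u^2/r^2$, i.e.\ the one-dimensional statement $\int_1^\infty r(r-1)(u')^2\,dr \geq \int_1^\infty u^2\,dr$. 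Taking $u(r)=r^{-1/2}$ on $[R,R^2]$ with tame cutoffs drives the ratio of the two sides to $1/4$ as $R\to\infty$, so the lemma with the constant $1$ is in fact false; there is no angular-Christoffel positivity to recover for this particular $h$, and correspondingly no choice of $\phi$ in your completed square can make $(\sqrt{r(r-1)}\,\phi)' - \phi^2 \geq 1$ hold throughout.

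The paper's own proof does not escape this: the $p$-coordinate metric written there, $g_{pp} = \tfrac{1}{4}(1-p^2)^{-2}$, is not what $p^2 = 1-r^{-1}$ gives (the correct coefficient is $g_{pp} = 4(1-p^2)^{-4}$, with volume element $\tfrac{2p}{(1-p^2)^4}\,dt\,dp\,d\Omega$ rather than $\tfrac{p}{2(1-p^2)^3}\,dt\,dp\,d\Omega$), and with the corrected metric the integration by parts and pointwise estimate only produce the constant $1/4$. Fortunately the $1/4$ version is what your completed square actually proves --- take $\phi = \tfrac{1}{2}\sqrt{(r-1)/r}$, so $\sqrt{r(r-1)}\,\phi = \tfrac{1}{2}(r-1)$ and $(\sqrt{r(r-1)}\,\phi)' - \phi^2 = \tfrac{1}{4} + \tfrac{1}{4r} \geq \tfrac{1}{4}$ --- and $1/4$ is all that is used later: in the nonvanishing step of Theorem 3.5 one just replaces the positivity region $\{r\leq 2\}$ for $2r^{-3}-r^{-2}$ by $\{r\leq 8\}$ for $2r^{-3}-\tfrac{1}{4}r^{-2}$, and the argument goes through unchanged.
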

\begin{proof}First, we change our coordinate by taking $p^2=(1-r^{-1})$, $p\in (0,1)$. We have
\[
g_0=p^2dt^2+\frac{1}{4}\frac{1}{(1-p^2)^2}dp^2+\frac{1}{(1-p^2)^2}d\Omega^2
\]
So we have $dVol=w_2 \frac{p}{2(1-p^2)^3}dt\wedge dp\wedge d\Omega$ where $w_2$ is the area of $S^2$.\\

Therefore $r^{-1}=(1-p^2)$. We have
\begin{align*}
\int_M \frac{|h|^2}{r^2}=\int_M |h|^2(1-p^2)^2&=\int_M |h|^2 \frac{p}{2(1-p^2)}dtdpd\Omega\\
&=\int_M |h|^2 \partial_p (-\frac{1}{4}\log(1-p^2))dtdpd\Omega\\
&=\int_M\langle  \nabla_1 h, h \rangle \frac{1}{2}\log(1-p^2)dtdpd\Omega\\
&\leq \int_M |\nabla h||h| \frac{1}{4}\frac{|\log(1-p^2)|}{(1-p^2)}dtdpd\Omega.
\end{align*}
Now we notice that
\[
\frac{1}{4}|\log(1-p^2)|\leq \frac{p}{2(1-p^2)}
\]
when $p\in (0,1)$. So we have
\begin{align*}
&\int_M |\nabla h||h| \frac{|\log(1-p^2)|}{4(1-p^2)}dtdpd\Omega\\
\leq &(\int_M |\nabla h|^2)^{\frac{1}{2}}(\int_M | h|^2 (1-p^2)^2)^{\frac{1}{2}}.
\end{align*}
This completes our proof.
\end{proof}

Now, we can start to prove the solution of equation (\ref{mee}) exists by finding a minimizer.

\begin{theorem}\ \\
$\mathbf{a}$. The functional $\mathbf{a}$ defined by (\ref{funl}) has a minimizer on the set\\ 
$\mbox{ }\mbox{ }\mbox{ }$\ \ \ $\mathbb{H}^1\cap \{ \|h\|_{2}=1 \}$.\\
$\mathbf{b}$. We can write the minimizer as\\
$\mbox{ }\mbox{ }$\ \ \ $h=h_0dt^2+h_1dr^2+h_2d\Omega^2$, where $h_0, h_1, h_2$ are function depending on $r$\\
$\mbox{ }\mbox{ }$\ \ \ only.
\end{theorem}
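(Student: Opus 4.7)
The plan is to apply the direct method in the calculus of variations for part (a), and then refine the minimizer via symmetry reduction for part (b). First I bound $\mathbf{a}$ below on the constraint set: by inequality (\ref{ie1}) and $r \geq 1$ on $M$, $|R^{\alpha\mu\beta\nu}h_{\alpha\beta}h_{\mu\nu}| \leq r^{-3}|h|^2 \leq |h|^2$, so $\mathbf{a}(h) \geq -2\|h\|_{2}^{2} = -2$ on $\{\|h\|_{2}=1\}$. For a minimizing sequence $\{h^{(z)}\}$, the lower bound forces uniform boundedness of $\int_M|\nabla h^{(z)}|^2$, and Lemma 3.3 then gives uniform $\mathbb{H}^1$-control. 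By reflexivity I extract $h^{(z)} \rightharpoonup h$ weakly in $\mathbb{H}^1$.

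Next I pass to the limit. The Dirichlet term is handled by weak lower semicontinuity. For the curvature term I split $M = M_R \cup (M\setminus M_R)$: on the tail, $|R^{\alpha\mu\beta\nu}h_{\alpha\beta}h_{\mu\nu}|\leq r^{-3}|h|^2 \leq R^{-1}r^{-2}|h|^2$, so the contribution is bounded by $CR^{-1}\|h^{(z)}\|_{\mathbb{H}^1}^2$, uniformly small as $R\to\infty$; on $M_R$, Lemma 3.2 gives the required compactness near $r=1$ while classical Rellich-Kondrachov handles $\{4/3 \leq r \leq R\}$, together yielding strong enough convergence to pass the curvature integral to the limit. Combining the two estimates gives $\mathbf{a}(h) \leq \liminf_z \mathbf{a}(h^{(z)}) = \inf \mathbf{a}$.

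The main obstacle is verifying $\|h\|_{2}=1$: because $\mathbb{H}^1$ controls only the weighted norm $\|r^{-1}h\|_{2}$ and $M$ is non-compact, a minimizing sequence could in principle spread its mass out to $r=\infty$, where the curvature term vanishes and the functional is asymptotically nonnegative. I would first exhibit a compactly supported trial tensor $h_* \in \mathbb{H}^1$ with $\|h_*\|_{2}=1$ and $\mathbf{a}(h_*) < 0$, so that $\inf \mathbf{a} < 0$. Then for large $z$ the curvature integral $\int R^{\alpha\mu\beta\nu}h^{(z)}_{\alpha\beta}h^{(z)}_{\mu\nu}$ is bounded away from zero, and the tail estimate above localizes this mass to some $M_R$; combined with the pointwise bound on the curvature tensor, this forces $\int_{M_R}|h^{(z)}|^2 \geq c_1 > 0$, so strong $L^2$-convergence on $M_R$ implies $h \neq 0$. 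A scaling argument then upgrades this to $\|h\|_{2}=1$: any weak limit with $\|h\|_{2} = c \in (0,1)$ would satisfy $\mathbf{a}(h/c) = \mathbf{a}(h)/c^2 < \mathbf{a}(h) \leq \inf \mathbf{a}$, a contradiction.

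For part (b) I would exploit three symmetries of $(M,g_0)$ that also preserve $\mathbf{a}$: the $\mathbf{SO}(3)$-invariance already encoded in $\mathbb{H}^1$, the $t$-translation symmetry, and the involution $\tau: t \mapsto -t$. The first forces any element of $\mathbb{H}^1$ into the form $h_{00}(t,r)dt^2 + 2h_{01}(t,r)dt\,dr + h_{11}(t,r)dr^2 + h_2(t,r)d\Omega^2$. Fourier decomposition in the periodic variable $t$ diagonalizes $\mathbf{a}$, and each nonzero mode contributes a strictly positive $\int|\partial_t h|^2$ term to the gradient energy under the constraint $\|h\|_{2}=1$; hence the minimum is attained at the zero mode, so all coefficients depend on $r$ alone. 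Finally, $\tau$ acts as $+1$ on $dt^2, dr^2, d\Omega^2$ and as $-1$ on $dt\,dr$, splitting the minimizer space into $\tau$-even and $\tau$-odd subspaces; restricting to the $\tau$-even subspace eliminates $h_{01}$, yielding the asserted form $h = h_0(r)dt^2 + h_1(r)dr^2 + h_2(r)d\Omega^2$.
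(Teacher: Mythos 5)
Your proof of part (a) follows the paper's direct method closely (lower bound from (\ref{ie1}), bounded minimizing sequence, weak limit, tail cut-off plus local compactness to pass the curvature term to the limit), but you diverge on the key step of showing $h\neq 0$. The paper invokes the Hardy inequality of Lemma 3.4 to obtain $\int_M(2r^{-3}-r^{-2})|h^{(z)}|^2\geq\varepsilon$ and localizes mass to $\{r\leq 2\}$; you instead observe that $\mathbf{a}(h^{(z)})<-\varepsilon$ forces $\int_M R^{\alpha\mu\beta\nu}h^{(z)}_{\alpha\beta}h^{(z)}_{\mu\nu}>\varepsilon/2$, cut the tail with $|R^{\alpha\mu\beta\nu}h_{\alpha\beta}h_{\mu\nu}|\leq r^{-3}|h|^2\leq R^{-1}r^{-2}|h|^2$ on $\{r>R\}$, and then use the pointwise bound on the curvature tensor on $M_R$ to conclude $\int_{M_R}|h^{(z)}|^2\geq c_1>0$. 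This reaches the same conclusion without Lemma 3.4 at all, which is a genuine simplification. Your scaling step to upgrade $0<\|h\|_2\leq 1$ to $\|h\|_2=1$ is the same rescaling the paper uses, phrased as a contradiction rather than as a renormalization of the limit.

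For part (b) the routes are genuinely different. The paper notes that the curvature term sees only $\mathrm{diag}(h)$, asserts $\mathbf{a}(h)\geq\mathbf{a}(\mathrm{diag}(h))$ together with $\|h\|_2\geq\|\mathrm{diag}(h)\|_2$, and then minimizes over $t$-slices to kill the $t$-dependence. You instead run a symmetry reduction: $\mathbf{SO}(3)$-invariance forces the form $h_{00}dt^2+2h_{01}dt\,dr+h_{11}dr^2+h_2d\Omega^2$, Fourier expansion in the periodic $t$-variable kills the nonzero modes because they contribute a nonnegative $\int|\partial_t h|^2$ to the gradient energy while leaving the curvature term unchanged, and the isometry $\tau: t\mapsto -t$ then eliminates $h_{01}$. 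This is cleaner conceptually, since it identifies the minimizer with the fixed subspace of the full isometry group of $g_0$, whereas the paper's inequality $\int|\nabla h|^2\geq\int|\nabla\,\mathrm{diag}(h)|^2$ is stated without justification and in fact is not entirely obvious because $\nabla$ mixes diagonal and off-diagonal components through the Christoffel symbols. One caveat: your symmetry argument needs the quadratic form $\mathbf{a}$ and the $L^2$-norm to decompose orthogonally across the even/odd and Fourier sectors; this does hold here because the relevant symmetries are isometries of $g_0$, but it should be stated, and the order matters (perform the Fourier reduction before the $\tau$-reflection, since before Fourier the $\tau$-even sector merely makes $h_{01}$ odd in $t$ rather than zero).
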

Before we prove this theorem, we prove the following lemma.
\begin{lemma}
There is a $\hat{h} \in \mathbb{H}^1$ such that $\mathbf{a}(\hat{h})<0$.
\end{lemma}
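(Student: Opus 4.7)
The plan is to build an explicit compactly supported, radially symmetric, diagonal test 2-tensor $\hat h$ and to show by direct computation that $\mathbf{a}(\hat h)<0$. The strategy is to exploit the mixed signs of the sectional curvatures listed in Section 2 so that a judicious sign choice for the diagonal entries of $\hat h$ makes the quadratic form $R^{\alpha\mu\beta\nu}\hat h_{\alpha\beta}\hat h_{\mu\nu}$ pointwise positive and large over a region, while keeping the Dirichlet term under control.

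Concretely, I would work with the ansatz
\[
\hat h = \phi(r)\bigl(a_0(r)\,dt^2 + a_1(r)\,dr^2 + a_2(r)\,d\Omega^2\bigr),
\]
where $\phi\in C_c^{\infty}((1,\infty))$ is a cutoff and the profiles $a_0,a_1,a_2$ are to be chosen (allowed to be constants as a first attempt). Because both $g_0$ and $\hat h$ are diagonal, the curvature contraction collapses to
\[
R^{\alpha\mu\beta\nu}\hat h_{\alpha\beta}\hat h_{\mu\nu}=\sum_{i\neq j}(g^{ii})^2(g^{jj})^2 R_{ijij}\,\hat h_{ii}\hat h_{jj},
\]
which by the Section~2 formulas is an explicit rational function of $r$. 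Inspecting the signs of the $R_{ijij}$ (positive for $(i,j)=(0,1)$ and $(2,3)$, negative for the remaining pairs) suggests choosing $a_0,a_1$ of one sign and $a_2$ of the opposite sign, so that every one of the six summands in the quadratic form contributes with the same (positive) sign on the support.

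Next I would compute the gradient term. Since the $a_i$ are (essentially) constants, $|\nabla\hat h|^2$ splits into a $|\phi'(r)|^2$-part, arising from the $r$-derivative of $\phi$, and a $|\phi(r)|^2$-part whose pointwise size is governed by the Christoffel symbols of Section~2 acting on $(a_0,a_1,a_2)$. Using a one-parameter family of cutoffs $\phi_\lambda(r)=\phi_0(\lambda^{-1}(r-r_0))$ supported in a shell of length of order $\lambda$, the $|\phi'_\lambda|^2$-contribution is of order $\lambda^{-1}$ while the curvature integral grows like $\lambda$. Only the $\phi^2$-part of $|\nabla\hat h|^2$ therefore competes with the curvature term, and the problem reduces to a purely algebraic comparison, on the support, between the pointwise quantities
\[
|\nabla k|^2 \qquad\text{and}\qquad 2R^{\alpha\mu\beta\nu}k_{\alpha\beta}k_{\mu\nu},\qquad k:=a_0\,dt^2+a_1\,dr^2+a_2\,d\Omega^2.
\]

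The main obstacle is this last pointwise comparison. The Dirichlet integrand $|\nabla k|^2$ contains irreducible Christoffel contributions of roughly the same order as the curvature integrand, so negativity is not automatic; it is essentially the instability statement of Gross--Perry--Yaffe / Allen \textup{[4]} for the Euclidean Schwarzschild background. I would therefore (i) tune $(a_0,a_1,a_2)$, allowing slow $r$-dependence if needed, so that the pointwise integrand $|\nabla k|^2 - 2R^{\alpha\mu\beta\nu}k_{\alpha\beta}k_{\mu\nu}$ is strictly negative on a nonempty shell $(r_0,r_1)\subset(1,\infty)$, using the explicit formulas of Section~2; and then (ii) multiply by $\phi_\lambda$ and send $\lambda\to\infty$. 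The resulting $\hat h_\lambda$ is smooth and compactly supported, hence lies in $\mathbb{H}^1$, and for $\lambda$ sufficiently large satisfies $\mathbf{a}(\hat h_\lambda)<0$, as required.
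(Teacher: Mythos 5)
Your ansatz and sign choice are exactly the paper's: it takes $\hat h=\eta(r)\bar h$ with $\bar h_{ii}=+\dot g_{ii}$ for $i=0,1$ and $\bar h_{ii}=-\dot g_{ii}$ for $i=2,3$, so that every one of the six sectional-curvature contributions $R^{ijij}\bar h_{ii}\bar h_{jj}$ is positive, and the only surviving Christoffel contribution $\int\eta^2|\nabla\bar h|^2$ comes from $\nabla_2\bar h_{12}$ and $\nabla_3\bar h_{13}$. Your step (i) also works: with this $\bar h$ the pointwise weighted integrand reduces (after pulling out the $4\pi w_2$ angular factor) to $16(1-r^{-1})-32r^{-1}=16-48r^{-1}$, which is indeed strictly negative on a nonempty shell, namely $r\in(1,3)$.

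The gap is in step (ii). The claim that the $|\phi_\lambda'|^2$-term is $O(\lambda^{-1})$ while ``the curvature integral grows like $\lambda$'' is false for this geometry, and the failure is structural, not a matter of constants. The region where $16-48r^{-1}<0$ is the bounded shell $1<r<3$; beyond $r=3$ the integrand turns \emph{positive}. So dilating the cutoff to a shell of length $\sim\lambda$ puts most of its mass in the region where the comparison goes the wrong way, and the bulk ``curvature'' contribution saturates at a finite number rather than growing. You cannot win by scaling: the cutoff cost $\int(\phi')^2(1-r^{-1})\,dr$ is a fixed-size quantity competing against a fixed-size bulk negativity, and the lemma reduces to a genuine quantitative inequality that has to be checked. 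The paper does exactly this, and the margin is thin. Two features of its cutoff are essential and are not generated by your scaling argument: (a) the inner ramp is placed right at $r=1$, where the factor $(1-r^{-1})$ in the Dirichlet cost vanishes, so the ramp cost $4n^2\int_1^{1+1/n}(1-r^{-1})\,dr\le 2$ is bounded \emph{uniformly in the slope $n$}; and (b) the outer end is not a compactly supported truncation but an exponential tail $\eta=e^{2\sqrt2/3}e^{-2r/3}$ (so $\hat h\in\mathbb H^1$ but $\hat h\notin C^\infty_c$), whose combined cost the paper evaluates numerically to about $0.57$, against a plateau contribution near $-2.7$; the total is roughly $-0.1$. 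A naive compactly supported cutoff with an outer ramp at $r_1\in(1,3)$ pays about $4(1-r_1^{-1})/\delta'$ for a ramp of width $\delta'$, and making that small forces the ramp far past $r=3$ where the integrand is positive. So some version of the paper's careful bookkeeping is unavoidable, and your proposal as written stops short of it.
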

\begin{proof} First of all, we define $\bar{h}\in C^{\infty}(Sym^2(T^*M))$ as follows
\begin{align*}
\bar{h}_{ii}&=\dot{g}_{ii}\mbox{  }\mbox{  }\mbox{   if } i=0,1\\
\mbox{ }&=-\dot{g}_{ii}\mbox{ if }i=2,3
\end{align*}
We can compute the covarient derivatives of $\bar{h}$ and get
\begin{equation}\label{cd}
\begin{array}{ll}
\nabla_2 \bar{h}_{12}&=\nabla_2 \bar{h}_{21}=2r;\\
\nabla_3 \bar{h}_{13}&=\nabla_3 \bar{h}_{13}=2r\sin^2\theta;\\
\nabla_k \bar{h}_{ij}&=0 \ \ \mbox{ otherwise}.
\end{array}
\end{equation}
Now we define $\hat{h}=\eta(r)\bar{h}$, where $\eta$ is a piecewise smooth, radially symmetric function defined as follows
\begin{align*}
\eta(r)&=n(r-1) \mbox{ for } r\in [1,1+\frac{1}{n});\\
&=1 \mbox{ for } r\in [1+\frac{1}{n},\sqrt{2});\\
&=e^{\frac{2\sqrt{2}}{3}}e^{-\frac{2}{3}r} \mbox{ for } r\in [\sqrt{2},\infty)
\end{align*}
where $n>0$ will be determined later. Then we have
\begin{align*}
\mathbf{a}(\hat{h})&=\int_M |\nabla \hat{h}|^2-2\int_M R^{ijij}\hat{h}_{ii}\hat{h}_{jj}\\
&=\int_M (\eta')^2(g_0)^{11}|\bar{h}|^2+\int_M \eta^2|\nabla \bar{h}|^2-2\int_M \eta^2R^{ijij}\bar{h}_{ii}\bar{h}_{jj}\\
&=4\pi w_2[16\int_1^{\infty}\eta^2(1-r^{-1})dr+4\int_1^{\infty} (\eta')^2(1-r^{-1})dr-16\int_1^{\infty}\eta^2r^{-1}dr]\\
&=4\pi w_2[16\int_1^{2}\eta^2 dr+4\int_1^{2} (\eta')^2(1-r^{-1})dr-32\int_1^{2}\eta^2r^{-1}dr]
\end{align*}
where $w_2$ is the area of $\mathbf{S}^2$.\\

 Let $I_1=[1,1+\frac{1}{n})$, $I_2=[1+\frac{1}{n},\sqrt{2})$ and $I_3=[\sqrt{2},\infty)$. We can rewrite integrals on the right hand side of the previous equation as
\begin{align*}
&16\int_1^{2}\eta^2 dr+4\int_1^{2} (\eta')^2(1-r^{-1})dr-32\int_1^{2}\eta^2r^{-1}dr\\
=&\int_{I_1}+\int_{I_2}+\int_{I_3}16\eta^2 dr+\int_{I_1}+\int_{I_3}4(\eta')^2(1-r^{-1})dr-\int_{I_1}-\int_{I_2}-\int_{I_3}32\eta^2r^{-1}dr\\
:=& J_1+J_2+J_3+J_4+J_5+J_6+J_7+J_8.
\end{align*}

We prove that $ J_1+J_2+J_3+J_4+J_5+J_6+J_7+J_8 < 0$. First, because $r^{-1}\leq\frac{1}{2}$ for all $r\in[1,\sqrt{2}]$, we have
\begin{align}\label{ne1}
J_1+J_6\leq 0.
\end{align}

Next, since $\eta(r)=1$ for all $r\in I_2$, we have
\[
J_2=\int_{I_2}16 dr= 16(\sqrt{2}-1-\frac{1}{n}).
\]
Now we consider $J_7$. Again, because $\eta(r)=1$ on $I_2$, we have
\[
J_7=-\int_{I_2}32r^{-1}dr=-32(\log(\sqrt{2})-\log(1+\frac{1}{n}))\leq -\frac{32}{(\sqrt{2})}(\sqrt{2}-1-\frac{1}{n}).
\]
We consider $J_4$. Since $\eta'(r)=n$ for all $r\in I_1$ and $1-r^{-1}\leq r-1$, we have
\[
J_4=4n^2\int_{I_1}(1-r^{-1})dr\leq 4n^2\int_{I_1}(r-1)dr\leq 2.
\]
Therefore we have
\begin{align*}
J_2+J_4+J_7&\leq 16(\sqrt{2}-1-\frac{1}{n})+2-\frac{32}{(\sqrt{2})}(\sqrt{2}-1-\frac{1}{n})\\
&=(16-\frac{32}{(\sqrt{2})})(\sqrt{2}-1-\frac{1}{n})+2
\end{align*}
$(16-\frac{32}{(\sqrt{2})})(\sqrt{2}-1)\approx -2.7451...$. If we take $n$ sufficient large, we will have
\begin{align}\label{ne2}
J_2+J_4+J_7\leq -0.7
\end{align}

Finally, we prove $J_3+J_5+J_8 < 0.7$. Since
\begin{align*}
&J_3+J_5+J_8\\
=&e^{\frac{2\sqrt{2}}{3}}[16\int_{\sqrt{2}}^{\infty} e^{-\frac{4}{3}r}dr+4\int_{\sqrt{2}}^{\infty}(\frac{2}{3})^2e^{-\frac{4}{3}r}(1-r^{-1})dr-32\int_{\sqrt{2}}^{\infty} e^{-\frac{4}{3}r}r^{-1}dr]\\
=&e^{\frac{4\sqrt{2}}{3}}[16(\frac{1}{9}+1)\int_{\sqrt{2}}^{\infty} e^{-\frac{4}{3}r}dr-16(\frac{1}{9}+2)\int_{\sqrt{2}}^{\infty} e^{-\frac{4}{3}r}r^{-1}dr]
\end{align*}
$\int_{\sqrt{2}}^{\infty} e^{-\frac{4}{3}r}r^{-1}dr\approx 0.05734...$ and $e^{\frac{4\sqrt{2}}{3}}\approx 6.5903...$. Therefore we can compute
\begin{align}\label{ne3}
J_3+J_5+J_8\leq 0.5694...
\end{align}

By (\ref{ne1}),(\ref{ne2}) and (\ref{ne3}). we have
\[
J_1+J_2+...+J_8 < -0.1.
\]
This implies $\mathbf{a}(\hat{h})<0$.
\end{proof}

\begin{proof}{(of Theorem 3.5)} Part $\mathbf{a}$: By (\ref{ie1}), we have $\mathbf{a}(h)\geq -2\|h\|_2= -2$ for all $h \in \mathbb{H}^1\cap \{ \|h\|_{2}=1 \}$. So we can find a minimizing sequence $\{h^{(z)}\}_{z \in \mathbb{N}}$:
\[
\lim_{n \rightarrow \infty} \mathbf{a}(h^{(z)})=\min_{h \in \mathbb{H}^1\cap \{ \|h\|_{2}=1 \}} \mathbf{a}(h).
\]
By lemma 3.6, we can assume that $ \mathbf{a}(h^{(z)})<0$ for all $z$. Recall that $|R_{ijij}|\leq r^{-3}\dot{g}_{ii}\dot{g}_{jj}$. We then have
\begin{align*}
0>\mathbf{a}(h^{(z)})&\geq \int_M |\nabla h^{(z)}|^2-r^{-3}\dot{g}^{kk}\dot{g}^{ll}h^{(z)}_{kk}h^{(z)}_{ll}\\
&\geq \int_M|\nabla h^{(z)}|^2 -\int_M r^{-3}(|\dot{g}^{kk}\dot{g}^{kk}h^{(z)}_{kk}h^{(z)}_{kk}|^2+|\dot{g}^{ll}\dot{g}^{ll}h^{(z)}_{ll}h^{(z)}_{ll}|^2)\\
&\geq \int_M|\nabla h^{(z)}|^2 -2\int_M r^{-3}|h^{(z)}|^2 
\end{align*}
That is
\begin{align}\label{wiq}
\|\nabla h^{(z)}\|_2 < 2 \|h^{(z)}\|_{\mathbb{H}^0},
\end{align}
Here $\|h^{(z)}\|_{\mathbb{H}^0}\leq \|h^{(z)}\|_{2}= 1$ for all $z$. By (\ref{wiq}), we get $\|h^{(z)}\|_{\mathbb{H}^1}<3$ so $\|h^{(z)}\|_{\mathbb{H}^1}$ is bounded. Therefore there is a weak limit
\[
h^{(z)}\rightharpoonup h
\]
in $\mathbb{H}^1$ satisfying $\liminf_{z \rightarrow \infty}\|h^{(z)}\|_{\mathbb{H}^1}\geq\|h\|_{\mathbb{H}^1}$. Also, for any compact set $\Omega \subset M $, the boundedness of  $\|h^{(z)}\|_{\mathbb{H}^1;\Omega}$ implies that there is a strongly convergent subsequence in $L^2(\Omega)$. Combining with lemma 3.3, we have
\[
\int_{M_R} |h^{(z)}|^2 \rightarrow \int_{M_R} |h|^2
\]
and
\begin{align}
\int_{M_R} R^{klkl}h^{(z)}_{kk}h^{(z)}_{ll} \rightarrow \int_{M_R} R^{klkl}h_{kk}h_{ll}
\end{align}
for all $R>1$ as $z$ tends to infinity. Also, we can easily get $\|h\|_{2}\leq 1$.\\

Now fix $\varepsilon>0$, using the fact $\liminf_{z \rightarrow \infty}\|h^{(z)}\|_{\mathbb{H}^1}\geq\|h\|_{\mathbb{H}^1}$, we have
\[
\int_M|\nabla h^{(z)}|^2 +\int_Mr^{-2}| h^{(z)}|^2 +\varepsilon \geq \int_M|\nabla h|^2+\int_Mr^{-2}| h|^2
\]
for all sufficient large $z$. Since $\|h\|_2\leq \|h^{(z)}\|_2=1$, we can choose $M_R= M\cap\{r\leq R\}$ with $R$ sufficient large such that $\int_{M-M_R}r^{-2}| h^{(z)}|^2 \leq R^{-2}\|h^{(z)}\|^2_{2}\leq \varepsilon$ and $\int_{M-M_R}r^{-2}| h|^2 \leq R^{-2}\|h\|^2_{2}\leq \varepsilon$ for all $i$. So
\[
\int_M|\nabla h^{(z)}|^2 +\int_{M_R} r^{-2}| h^{(z)}|^2 +3\varepsilon \geq \int_M|\nabla h|^2+\int_{M_R}r^{-2}| h|^2
\]
This implies
\[
\liminf_{z\rightarrow \infty}\int_M|\nabla h^{(z)}|^2 +3\varepsilon \geq \int_M|\nabla h|^2.
\]
Because $\varepsilon$ is arbitrary, $\liminf_{z\rightarrow \infty}\int_M|\nabla h^{(z)}|^2 \geq \int_M|\nabla h|^2$. By (3.14), we get $\mathbf{a}(h)\leq \lim_{z\rightarrow \infty}\mathbf{a}(h^{(z)})$.\\

Next, we will show that $h$ is not zero. Here we use the Hardy type inequality (\ref{hie}). We can assume that there exists $\varepsilon>0$ such that
\[
-\varepsilon > \mathbf{a}(h^{(z)})\geq \int_M |\nabla h^{(z)}|^2 -2\int_M r^{-3}|h^{(z)}|^2
\]
for all  sufficient large $z$. By (\ref{hie}),
\[
 \int_M |\nabla h^{(z)}|^2 -2\int_M r^{-3}|h^{(z)}|^2\geq \int_M r^{-2}|h^{(z)}|^2-2\int_M r^{-3}|h^{(z)}|^2
\]
This implies that
\[
\int_M (2r^{-3}-r^{-2})|h^{(z)}|^2 \geq \varepsilon
\]
Consider the positive part of left hand side, i.e. $\{r\leq 2\}\cap M=S$. We have
\[
3\int_S |h^{(z)}|^2 \geq \varepsilon
\] 
for large $z$. Taking limit we get $\|h\|_2 > \frac{\varepsilon}{3}$.
Therefore if we set $\bar{h}=\frac{h}{\|h\|_2}$, we will have $\mathbf{a}(\bar{h})\leq \mathbf{a}(h)\leq \mathbf{a}(h^{(z)})$. To make our notation simple, we replace the notation $\bar{h}$ by $h$, which is the minimizer we found.\\

Part $\mathbf{b}$: First of all, notice that we can write $h= h_{ij}dx^i\otimes dx^{j}$, with $x^0=t$, $x^1=r$, $x^2=\theta$, $x^3=\varphi$. According to our computations of the curvature in section 2, the negative term of $\mathbf{a}(h)$ is contributed only by the diagonal part of $h$ (We have only one coordinate chart, so $diag(h)$ makes no confusion). Therefore we have
\[
0>\mathbf{a}(h) \geq \mathbf{a}(diag(h));
\]
\[
\|h\|_2 \geq \|diag(h)\|_2.
\]
So $h$ is a minimizer iff $h=diag(h)$. Next, we prove that $h_{ii}$, $\forall i$, are independent of $t$. This is also easy to see. We prove this by contradiction. If not, we can choose $t_0\in [0,4\pi]$ such that it minimizes
\[
S(\tau)=\int_{M\cap \{t=\tau\}} |\nabla h|^2-2R^{\alpha\mu\beta\nu}h_{\alpha\beta}h_{\mu\nu} rdrd\Omega. 
\]
We define $h^*=h(t_0)$. Because there $h$ doesn't change along the $t$ direction, we have $|\nabla h^*|\leq |\nabla h|$. So
\[
\mathbf{a}(h^*)\leq \mathbf{a}(h).
\] 
"$=$" iff $h_{ii}$ are independent of $t$.\\
Combining these two facts and $h$ is radially symmetric, we prove part $\mathbf{b}$.
\end{proof}

\section{Short-Time Existence}
\subsection{Estimate of $|h|$ and $|\nabla h|$}
In the previous section, we prove that there is $h\in \mathbb{H}^1$ such that $\Delta_Lh=-\lambda h$ and $h$ is radially symmetric. Using the Sobolev inequalities, we will prove that $h\in C^{0,\frac{1}{2}}(\Omega)$ for any compact subset $\Omega\subset M$. Using ideas introduced by Schauder, we will then prove further regularity property of $h$ locally (Since $\Delta_Lh=-\lambda h$ is a elliptic PDE and $h\in C^{0,\frac{1}{2}}(\Omega)$, we have $h\in C^{2,\frac{1}{2}}(\Omega)$. Repeating this process, we will prove that $h\in C^{\infty}(\Omega)$). Notice that, even though $h$ is obtained by taking a weak limit of a sequence of compactly supported 2-tensors, it doesn't indicate that $|h|$ will vanish as $r\rightarrow 1$ and $r\rightarrow \infty$. We will show in this section that such is indeed the case. \\

Here we start with estimates of $h$. We generalize our arguments a little bit such that we can apply these arguments in next section.

\begin{definition} Let $\|k\|_{W^{1,2}}=\|k\|_2+\|\nabla k\|_2$ and $\|k\|_{W^{2,2}}=\|k\|_2+\|\nabla k\|_2+\|\nabla\nabla k\|_2$ for all $k \in C^{\infty}_0(Sym^2(T^*M))$. Here we define
\[ 
|\nabla\nabla k|^2=\dot{g}^{\alpha\beta}\dot{g}^{\gamma\delta}\dot{g}^{\lambda\mu}\dot{g}^{\nu\xi}\nabla_\alpha\nabla_\gamma k_{\lambda\nu} \nabla_\beta\nabla_\delta k_{\mu\xi}
\]
and $\|\nabla\nabla k\|_2=(\int_M|\nabla\nabla k|^2)^{\frac{1}{2}}$.
\end{definition}

\begin{pro} Let $k \in C^{\infty}_0(Sym^2(T^*M))$ be a radially symmetric 2-tensor. Suppose $|k|$, $|\nabla k|$ and $|\nabla \nabla k|$ are $L^2$ functions. Then
\begin{align}
|k|\leq C \| k\|_{W^{1,2}},\\
|\nabla k|\leq C \| k\|_{W^{2,2}}
\end{align}
 for some universal constant $C$ (When we use the term "universal constant", it means a constant that depends only on $g_0$).
\end{pro}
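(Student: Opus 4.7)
The plan is to exploit the SO(3)-invariance of $k$ to reduce Proposition 4.2 to a weighted one-dimensional Sobolev embedding of the type $W^{1,2}\hookrightarrow L^\infty$. Since $k$ is radially symmetric, the nonnegative scalars $u:=|k|$ and $v:=|\nabla k|$ are SO(3)-invariant and hence descend to functions of $(t,p)$ alone, where $p$ is the coordinate of Remark 2.1 in which the Schwarzschild tip $r=1$ becomes the smooth interior point $p=0$ of the cigar $\{(t,p)\}$. In the $t$-invariant setting (Theorem 3.5(b)), $u$ and $v$ are in fact functions of $p$ alone; the general SO(3)-invariant case reduces to this by a Fourier decomposition on the $t$-circle, since the $|\partial_tk|^2$ contribution to $|\nabla k|^2$ is already absorbed into $\|\nabla k\|_2^2$.

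The first ingredient is Kato's inequality $|\nabla u|_{g_0}\le|\nabla k|_{g_0}$, valid away from $\{u=0\}$ and extended to all of $M$ by approximating $u$ with $(u^2+\varepsilon)^{1/2}$. This converts $\|\nabla k\|_2<\infty$ into a weighted one-dimensional bound
\[
\int_0^1 u'(p)^2\,w(p)\,dp\ <\ \infty,
\]
where $w(p)$ is the smooth positive weight coming from $\sqrt{g_0}\cdot g_0^{pp}$, and similarly $\int_0^1 u^2\,\tilde w\,dp<\infty$. A fundamental-theorem-of-calculus identity
\[
u(p_0)^2\ =\ -\int_{p_0}^{1}\partial_p\bigl(u(p)^2\chi(p)\bigr)\,dp
\]
for a suitable cutoff $\chi$, followed by Cauchy--Schwarz against these weights, then produces the pointwise bound $\sup_p u(p)\le C\|k\|_{W^{1,2}}$, once the two endpoint degeneracies have been controlled.

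The principal obstacle is the Schwarzschild tip $p=0$, where $w(p)\sim p$ degenerates exactly like the 2D radial Lebesgue measure and a purely scalar weighted $W^{1,2}$ embedding fails (for example $\log\log(1/p)$-type profiles satisfy the integrability without being bounded). Here one must use the tensor structure of $k$: the Christoffel symbols computed in Section 2 (e.g.\ $\Gamma^1_{22}=-(1-r^{-1})r$ and $\Gamma^2_{12}=1/r$) force zeroth-order contributions such as $-k_2/r+(r-1)k_1$ into $\nabla_\theta k_{12}$, so that $|\nabla k|^2$ strictly dominates $|\nabla u|^2$ and carries a Hardy-type lower bound $\gtrsim|k|^2/r^2$ on top of Lemma 3.4. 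This additional control rules out the $\log\log$ pathology at $p=0$ and yields the universal constant. The opposite end $p\to1$ is easier: Lemma 3.4 produces $L^2$-decay of $u$ at infinity, and the standard 1D inequality $W^{1,2}\hookrightarrow L^\infty$ on a half-line closes the bound.

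The second estimate $|\nabla k|\le C\|k\|_{W^{2,2}}$ is obtained by running the identical scheme one derivative higher on $v$ in place of $u$: the iterated Kato inequality $|\nabla v|\le|\nabla\nabla k|$ converts $\|\nabla\nabla k\|_2<\infty$ into the analogous weighted one-dimensional $W^{1,2}$ bound on $v$, and the same endpoint analysis, with the same role played by the Christoffel zeroth-order terms at the tip, concludes the proof.
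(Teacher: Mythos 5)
Your route is genuinely different from the paper's. The paper applies the divergence theorem to the vector field $Y=|k|(\dot g^{11})^{1/2}\partial_p$ over $M_x=\{p<x\}$ and reads $|k|(x)$ off the boundary term $\int_{\partial M_x}i_Y\mathrm{Vol}$, whereas you reduce to a one-dimensional weighted Sobolev embedding via Kato's inequality $|\nabla|k||\le|\nabla k|$. You are also right to single out the tip $p=0$ as the crux: there the induced measure on the $(t,p)$-quotient is $\sim p\,dp\,dt$, so one is facing the borderline two-dimensional $W^{1,2}\hookrightarrow L^\infty$ statement, which is false for scalars.

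The mechanism you propose to rescue the estimate at $p=0$ does not survive inspection. You argue that the tensor nature of $k$ injects zeroth-order Christoffel contributions into $|\nabla k|^2$ that strictly dominate $|\nabla|k||^2$ and produce a Hardy-type lower bound. This fails on the conformal line $k=f\,g_0$: since $g_0$ is covariantly constant, $\nabla k=df\otimes g_0$, hence $|k|=2|f|$ and $|\nabla k|=2|\nabla f|$ with no zeroth-order term at all; the very combination you quote, $-k_{\theta\theta}/r+(r-1)k_{rr}$ inside $\nabla_\theta k_{r\theta}$, cancels identically when $k_{ii}=f\,g_{ii}$, because that cancellation \emph{is} the statement $\nabla g_0=0$. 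On this subspace the problem collapses to the scalar two-dimensional radial case, and there a Moser--Trudinger bubble $f_\epsilon$ (adjusted to vanish at $p=0$ at negligible extra $\dot H^1$ cost, since a point has zero $2$-capacity) keeps $\|f_\epsilon\,g_0\|_{W^{1,2}}$ bounded while $\sup|f_\epsilon\,g_0|\sim\sqrt{\log(1/\epsilon)}\to\infty$. So the route you sketch cannot deliver a universal $C$ in $|k|\le C\|k\|_{W^{1,2}}$, and the gap is a real obstruction, not a technicality to be tightened. A smaller but related confusion: a gain of the form $|k|^2/r^2$ is no improvement at the tip, where $r\to1$ and $r^{-2}\to1$; to beat the Moser bubble one would need a weight like $p^{-2}$ or $p^{-2}\log^{-2}(1/p)$ in the $p$-variable, and the conformal direction shows no such gain comes for free from the tensor structure. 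Any correct proof of the stated inequality would have to use some input beyond radial symmetry and the raw $W^{1,2}$ norm to exclude this conformal mode near the bolt.
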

\begin{proof} We use $p$-coordinate as we did in lemma 3.5 and use the divergence theorem. We define $Y=|k|(\dot{g}^{11})^{\frac{1}{2}}\partial_1$. Then if we set $M_x=M\cap \{p < x\}$, $x\leq \frac{1}{2}$ we have 
\begin{align}\label{div}
\int_{M_x} div(Y) =\int_{\partial M_x} i_YVol,
\end{align}
where
\begin{align*}
div(Y)&= \nabla_1(Y^1)= (\nabla_1 |k|)(\dot{g}^{11})^{\frac{1}{2}}+ |k|^2 \nabla_1((\dot{g}^{11})^{\frac{1}{2}}\partial_1)^1\\
&\leq \frac{1}{2}|k|^{-\frac{1}{2}}\langle \nabla_1k,k\rangle (\dot{g}^{11})^{\frac{1}{2}} \leq \frac{1}{2}|\nabla k|
\end{align*}
So the left hand side of (\ref{div}) is smaller than
\[
\frac{1}{2}\int_{M_x}|\nabla k| \leq \frac{1}{2}(\int_{M_x}|\nabla k|^2)^{\frac{1}{2}}(\int_{M_x}1)^{\frac{1}{2}}\leq C \|\nabla k\|_{2}x.
\]
Now consider the right hand side, we have
\begin{align*}
|\int_{\partial M_x} i_YVol|&= |\int_{\{p=0\}}|k|\frac{p}{(1-p^2)^2}dtd\Omega-\int_{\{p=x\}}|k|\frac{p}{(1-p^2)^2}dtd\Omega|\\&= 4\pi\omega_2\frac{x}{(1-x^2)^2}|k|(x)\geq C 4\pi\omega_2 x|k|(x)
\end{align*}
where $w_2$ is the area of $S^2$. So (4.1) is valid when $p\leq \frac{1}{2}$.\\

When $p\geq \frac{1}{2}$, we consider $Y=|k|^2\dot{g}^{11}\partial_1$ and $M^x=M\cap\{p>x\}$. Use the similar argument, we have
\[
\int_{M^x} div (Y) \leq \int_{M^x}|\nabla k|^2+|k|^2.
\]

Here we claim that $\liminf_{p\rightarrow 1}\frac{|k|^2}{(1-p^2)^2}=0$. If not, we will have
$|k|^2\geq c(1-p^2)^2$ for some positive $c$. That means $\int_M |k|^2 = \int_M (1-p^2)^2=\infty$, which is a contradiction.\\

Therefore we have
\[
\int_{\partial M^x} i_YVol= -C\frac{|k|^2}{(1-x^2)^2}.
\]
So we get (4.1).
\end{proof}

To prove the second inequality, We use the similar argument as about by replacing $k$ by $\nabla k$.\\

\begin{remark} Using the $p$-coordinate, since $\dot{g}_{00}=p^2$, we have $h''_{00}$ is bounded near $p=0$. (That is, near $r=1$.)
\end{remark}

We can apply this proposition to our $h$.
\begin{cor}
Let $h$ be the eigenvector provided by theorem 3.5. Then there exist $C_1$, $C_2(\lambda),C_3(\lambda)>0$ such that
\begin{align}
|h|\leq C_1,\\
|\nabla h|\leq C_2,\\
\ \ |\nabla\nabla h|,\ |\nabla^{(3)} h|\leq C_3.
\end{align}
where $C_2$, $C_3$ depends on $\lambda$ and $C_1$ is universal.
\end{cor}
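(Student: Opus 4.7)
The plan is to combine Proposition 4.2 with an elliptic $L^2$-bootstrap for the equation $\Delta_L h = -\lambda h$. Two ingredients are required: (i) smoothness of $h$ together with $L^2$ bounds on $\nabla^j h$ for $j=1,2,3,4$, and (ii) an extension of Proposition 4.2 to higher-rank tensors, so that it can be applied to $\nabla h$ and $\nabla^2 h$ as well as to $h$.

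For (i), I would first invoke standard elliptic regularity: $\Delta_L$ is a uniformly elliptic operator with smooth coefficients on the interior of $M$, so the weak $\mathbb{H}^1$-eigenfunction from Theorem 3.5 is automatically in $C^{\infty}_{\mathrm{loc}}(M)$. Two $L^2$ bounds are already in hand: $\|h\|_2=1$ by normalization and $\|\nabla h\|_2\le 2$, the latter obtained by combining (\ref{wiq}) with $\|h^{(z)}\|_{\mathbb{H}^0}\le\|h^{(z)}\|_2=1$ (since $r\ge 1$) and passing to the weak limit via lower semi-continuity. Since $g_0$ is Ricci flat, the equation reads $\Delta h_{kl}=-\lambda h_{kl}-2R^{r\ p}_{\ k\ l}h_{rp}$, and the pointwise bound $|R_{ijij}|\le C r^{-3}\dot g_{ii}\dot g_{jj}$ from Section 2 yields $\|\Delta h\|_2\le |\lambda|+2C$. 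A standard Bochner-type $L^2$ identity then gives $\|\nabla^2 h\|_2\le C(\lambda)$; differentiating the equation once and then twice, and absorbing the commutators $[\nabla,\Delta]$ into bounded curvature factors (using the globally bounded $|\mathrm{Rm}|$, $|\nabla\mathrm{Rm}|$ and $|\nabla^2\mathrm{Rm}|$ which follow by direct differentiation of the explicit formulas of Section 2), gives $\|\nabla^3 h\|_2,\|\nabla^4 h\|_2\le C(\lambda)$.

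For (ii), observe that the divergence-theorem proof of Proposition 4.2 uses only the pointwise scalar norm $|k|$ and its radial derivative: the rank of $k$ enters nowhere, except through the assumption of radial symmetry which is what causes the boundary integral in (\ref{div}) to reduce to a multiple of the pointwise value $|k|(x)$. The proof therefore extends verbatim to any SO(3)-invariant tensor of arbitrary rank. Because $h$ is radially symmetric and SO(3) acts by isometries of $g_0$, each $\nabla^j h$ is SO(3)-invariant, so $|\nabla^j h|$ is a function of $p$ alone. Applying Proposition 4.2 to $h$ gives $|h|\le C\|h\|_{W^{1,2}}\le 3C=:C_1$, and this constant is universal since the input bound $\|h\|_{W^{1,2}}\le 3$ is independent of $\lambda$. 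The second inequality of Proposition 4.2 applied to $h$ gives $|\nabla h|\le C\|h\|_{W^{2,2}}\le C_2(\lambda)$; applying the extended proposition to $\nabla h$ and to $\nabla^2 h$ gives $|\nabla^2 h|\le C\|\nabla h\|_{W^{2,2}}\le C_3(\lambda)$ and $|\nabla^3 h|\le C\|\nabla^2 h\|_{W^{2,2}}\le C_3(\lambda)$.

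The main technical obstacle is that the integration-by-parts arguments behind both the Bochner bootstrap in (i) and Proposition 4.2 itself are carried out on the non-compact manifold $M$, so one must verify the absence of boundary contributions at the two ends $r\to 1$ and $r\to\infty$. This is exactly the role of the $p$-coordinate of Remark 2.1 and of the vanishing-at-infinity argument in the proof of Proposition 4.2 (the claim $\liminf_{p\to 1}|k|^2/(1-p^2)^2=0$). I would handle it by first carrying out each bootstrap step on the compactly supported approximants from the minimizing sequence and then passing to the limit as in Theorem 3.5, which is clean because the relevant commutator and curvature terms are bounded globally on $M$.
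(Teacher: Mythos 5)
Your argument is correct and reaches the same conclusion, but it goes by a different route than the paper's. The paper's proof exploits the radial symmetry much more directly at the pointwise level: since $h$ depends only on $r$, the equation $\Delta_L h=-\lambda h$ is effectively an ODE in $r$, so $|\nabla\nabla h|$ is controlled \emph{pointwise} by $|\Delta_L h|$ together with $|h|,|\nabla h|$ (and likewise $|\nabla^{(3)}h|$ by differentiating the equation once and using $|\nabla h|,|\nabla\nabla h|$). Concretely, the paper first applies Proposition 4.2 as stated to $h$ (using $\|h\|_{W^{1,2}}\le$ universal constant) to get $|h|\le C_1$, then uses the pointwise ODE control to deduce $\|\nabla\nabla h\|_2\le C(\lambda)$, applies the second inequality of Proposition 4.2 to get $|\nabla h|\le C_2(\lambda)$, and finally reads off $|\nabla\nabla h|$ and $|\nabla^{(3)}h|$ pointwise from the equation and its first derivative together with the bounds already obtained. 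This only needs $L^2$ control of $h$ up to two derivatives and uses Proposition 4.2 only for rank-$2$ tensors. Your version replaces the pointwise ODE step by a Bochner-type $L^2$ bootstrap, which buys generality (it does not lean on radial symmetry at that step) but costs more: you need $L^2$ bounds on $\nabla^{(j)}h$ for $j$ up to $4$, you must extend Proposition 4.2 to higher-rank SO(3)-invariant tensors (an extension the paper does state, but only after this corollary, for Proposition 4.5), and you must justify two extra integrations by parts on the non-compact $M$. You correctly flag the integration-by-parts issue and propose to handle it by approximation from the minimizing sequence, which is reasonable. Two small remarks: the cleanest way to get the universal $\mathbb{H}^1$ bound after the renormalization at the end of Theorem 3.5 is to argue directly from $\mathbf{a}(h)<0$ and $\|h\|_2=1$ for the normalized minimizer (giving $\|\nabla h\|_2^2<2$), rather than trying to track the constants through (\ref{wiq}) and the weak limit; and when you invoke the extended Proposition 4.2 at $\nabla^{(2)}h$ you are implicitly relying on the boundary-term analysis near $p=0,1$ going through for the higher covariant derivatives, which in the paper is only established (via Propositions 4.5 and 4.6) after this corollary.
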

\begin{proof} We apply proposition 4.2. Notice that $\|h\|_{2}=1$ and $\|\nabla h\|_2\leq 1$, so we get the first inequality. Moreover, since equation (\ref{mee}) tell us that $|\nabla\nabla h|$ can be controlled by $|h|$ (Here we use the fact that $h$ is radially symmetric), so $\|\nabla\nabla h\|_2\leq C_2(\lambda)$. We can get the second inequality.\\

Since $h$ is radially symmetric, $|\nabla\nabla h|$ can be controlled by $(\Delta_Lh)$. Similarly, $|\nabla^{(3)} h|$ can be controlled by $\nabla(\Delta_Lh)$. Now apply (4.4) and (4.5) in equation $(\Delta_Lh)=-\lambda h$ and $\nabla(\Delta_Lh)=-\lambda\nabla h$. We get (4.6). 
\end{proof}

\subsection{Vanishing of $h$ at infinity}
We can improve our estimate near $p=0$ and $p=1$. First of all, because $h$ is radially symmetric, the equation $\nabla^{(l)}(\Delta_Lh)=-\lambda \nabla^{(l)}h$ tells us that $|\nabla^{(l+2)}h|$ is $L^2$ bounded if $|\nabla^{(l)}h|$ is $L^2$ bounded. So by induction, we have $|\nabla^{(l)}h|$ is $L^2$ bounded for all $l$ which implies $\|k\|_{W^{l+1,2}}\leq C_{l,\lambda}$ for some constant $C_{l,\lambda}$ depending on $l$ and $\lambda$. Notice that, if we replace $Y$ by $|\nabla^{(l)}h|(\dot{g}^{11})^{\frac{1}{2}}\partial_1$ in equation (\ref{div}) and follow the argument of proposition 4.2, we will get
\[
|\nabla^{(l)}k|\leq C \|k\|_{W^{l+1,2}}.
\]
Now, because $\|k\|_{W^{l+1,2}}\leq C_{l,\lambda}$, corollary 4.3 can be generalized such that
\[
|\nabla^{(l)}h|\leq C_{l,\lambda}
\]
for some constant $C_{l,\lambda}$ depending on $l$ and $\lambda$.
\begin{pro} Let $h$ be the eigenvector provided by theorem 3.5, we have
\begin{align}
|\nabla^{(l)}h|(p)&\leq C_{l,\lambda} p.
\end{align}
for $p\leq \frac{1}{2}$ and a constant $C_{l,\lambda}$ depending only on $l$ and $\lambda$. Therefore, $|\nabla^{(l)}h|\rightarrow 0$ as $p\rightarrow 0$.
\end{pro}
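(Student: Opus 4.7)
The plan is to iterate the divergence-theorem argument from the proof of Proposition 4.2, now equipped with the uniform pointwise derivative bounds $|\nabla^{(l)}h|\le C_{l,\lambda}$ for every $l$ that were just established in the paragraph preceding the proposition. These pointwise bounds are sharper than the $L^2$-bounds used in Proposition 4.2 and let us estimate the interior integral by an $L^\infty$-norm times volume instead of by a Cauchy--Schwarz product. Since $\sqrt{g_0}=O(p)$ in the $p$-coordinate, the volume $\mathrm{vol}(M_x)$ shrinks like $x^2$ as $x\to 0$, and after cancelling one factor of $x$ coming from the boundary integral we recover the linear decay $|\nabla^{(l)}h|(p)\le C_{l,\lambda}\,p$ asserted in (4.7).

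Concretely, fix $l\ge 0$, set $M_x=M\cap\{p<x\}$ with $x\le 1/2$, and apply the divergence identity to the radial vector field $Y=|\nabla^{(l)}h|\,(\dot g^{11})^{1/2}\partial_1$. Exactly the same computation as in Proposition 4.2, with $k$ replaced by $\nabla^{(l)}h$, gives the pointwise bound
\[
\mathrm{div}(Y)\ \le\ \tfrac12\,|\nabla^{(l+1)}h|\ \le\ \tfrac12\,C_{l+1,\lambda},
\]
using the uniform bound on the next derivative. Since $\mathrm{vol}(M_x)=4\pi w_2\int_0^x \frac{p\,dp}{2(1-p^2)^3}\le Cx^2$ for $x\le 1/2$, this yields
\[
\int_{M_x}\mathrm{div}(Y)\ \le\ C_{l,\lambda}\,x^2.
\]

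On the boundary $\partial M_x=\{p=0\}\cup\{p=x\}$ the contribution from $p=0$ vanishes because of the factor of $p$ in the volume form, while at $p=x$ the radial symmetry of $h$ and its independence of $t$ (Theorem 3.5\,$\mathbf b$) make $|\nabla^{(l)}h|$ a function of $p$ alone, so
\[
\Bigl|\int_{\partial M_x} i_Y\,\mathrm{Vol}\Bigr|\ =\ 4\pi w_2\,\frac{x}{(1-x^2)^2}\,|\nabla^{(l)}h|(x)\ \ge\ 4\pi w_2\,x\,|\nabla^{(l)}h|(x).
\]
Equating the two sides of Stokes' theorem and dividing by $x$ gives $|\nabla^{(l)}h|(x)\le C_{l,\lambda}\,x$ for every $x\le 1/2$, which is (4.7). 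In particular $|\nabla^{(l)}h|\to 0$ as $p\to 0$.

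The one genuine subtlety, present already in Proposition 4.2 but not addressed there, is that $|\nabla^{(l)}h|$ may fail to be differentiable on the zero set of $\nabla^{(l)}h$. This should be handled in the standard way, either by restricting attention to the open set where $\nabla^{(l)}h\ne 0$ (the inequality being trivial at points where the left-hand side vanishes) or by working throughout with the regularized quantity $\sqrt{|\nabla^{(l)}h|^2+\varepsilon}$ and sending $\varepsilon\downarrow 0$ at the end. Beyond this, everything is a matter of matching powers of $x$; in particular no induction on $l$ is needed, since the pointwise bound on $|\nabla^{(l+1)}h|$ used at each step is available for all $l$ simultaneously from the preceding paragraph.
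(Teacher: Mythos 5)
Your argument is essentially the same as the paper's: replace $Y$ in the divergence identity~(4.3) by $|\nabla^{(l)}h|(\dot g^{11})^{1/2}\partial_1$, bound the interior integral by $\sup|\nabla^{(l+1)}h|\cdot\mathrm{vol}(M_x)=O(x^2)$ using the pointwise bounds established just before the proposition, read off the boundary term $\asymp x\,|\nabla^{(l)}h|(x)$, and divide by $x$. The only addition is your remark about regularizing $|\nabla^{(l)}h|$ at its zero set; this is a genuine technical point that the paper's Proposition~4.2 leaves implicit, but it does not change the substance of the argument.
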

\begin{proof}We use equation (\ref{div}) in proposition 4.2 again. Since
\[
|\nabla h|(p)\leq C\|h\|_{W^{2,2}},
\]
we have
\[
\frac{1}{2}\int_{M_x}|\nabla h| \leq \frac{1}{2}\|h\|_{W^{2,2}}\int_{M_x}1\leq C\|h\|_{W^{2,2}} p^2
\]
by taking $x=(1-p^2)^{-1}$ for $p<\frac{1}{2}$. This gives us the estimate on the left hand side of (\ref{div}).\\

Since we already know that the right hand side of (4.3) can be expressed as $p(\frac{1}{1-p^2})|h|(p)$, we have
\[
|h|(p)\leq C\|h\|_{W^{2,2}} p. 
\]

To get the estimate of $|\nabla^{(l)}h|$ for $l>1$, we replace $Y$ by $|\nabla^{(l)}h|(\dot{g}^{11})^{\frac{1}{2}}\partial_1$ in (\ref{div}) and follow the the same argument. Then we can get this result.
\end{proof}

We finish this subsection by giving a similar estimate of $|h|$ near $p=1$.
\begin{pro}
Let $h$ be the eigenvector provided by theorem 3.5, we have
\[
|\nabla^{(l)}h|(p)\leq C_{l,\lambda}(1-p^2)
\]
for $p\geq \frac{1}{2}$ and a constant $C_{l,\lambda}$ depending only on $l$ and $\lambda$. Therefore we have $|\nabla^{(l)}h|\rightarrow 0$ as $p\rightarrow 1$.
\end{pro}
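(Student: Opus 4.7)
The strategy parallels the $p \geq \frac{1}{2}$ half of Proposition 4.2 and the vanishing argument of Proposition 4.4, but now applied on the exterior region $M^x = M \cap \{p > x\}$ with $x \in [\frac{1}{2},1)$ and with a vector field tuned to supply the full weight $(1-x^2)^{-2}$ at the boundary. The naive transcription $Y = |h|^2 \dot{g}^{11} \partial_1$ read in $p$-coordinates only yields a factor $(1-x^2)^{-1}$ and is therefore too weak for the $(1-p^2)$-decay we want. Instead I would take
\[
Y = (1-p^2)|h|^2 \partial_1
\]
(equivalently $Y = |h|^2 \dot{g}^{11}\partial_1$ read in $r$-coordinates, which restores the direct analogy with Proposition 4.2), and apply the divergence theorem on the annulus $\{x<p<x_i\}$ for a sequence $x_i\to 1$ to be chosen below.

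Using $\sqrt{g_0}=\frac{p\sin\theta}{2(1-p^2)^3}$ a direct computation gives $\sqrt{g_0}\,Y^1 = \frac{p\sin\theta |h|^2}{2(1-p^2)^2}$, so the boundary integrand on $\{p=x\}$ is a positive multiple of $\frac{x|h|^2(x)}{(1-x^2)^2}$, while differentiating yields
\[
\mathrm{div}(Y) = \frac{(1-p^2)|h|^2}{p} + 2(1-p^2)|h|\partial_1|h| + 4p|h|^2.
\]
Coupling this with the Cauchy-Schwarz inequality $2(1-p^2)|\partial_1 |h|| \leq |\nabla h|$, which follows from $(\dot{g}^{11})^{1/2} = 2(1-p^2)$ and the diagonal form of $g_0$, gives the pointwise bound $|\mathrm{div}(Y)| \leq C(|h|^2 + |\nabla h|^2)$ throughout $\{p\geq \frac{1}{2}\}$. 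Hence $\int_{M^x}|\mathrm{div}(Y)|\,dVol \leq C(\|h\|_2^2 + \|\nabla h\|_2^2) \leq C$, uniformly in $x$. For the boundary at $p\to 1$ I reuse the $\liminf$ argument of Proposition 4.2: if $\liminf_{p\to 1} |h|^2/(1-p^2)^2 > 0$ then $\int_M |h|^2\,dVol$ would diverge, contradicting $\|h\|_2 = 1$; so one can extract $x_i\to 1$ along which $|h|^2(x_i)/(1-x_i^2)^2 \to 0$ and the outer boundary term vanishes. Stokes' theorem then gives $\frac{x|h|^2(x)}{(1-x^2)^2} \leq C$, and dividing by $x \geq \frac{1}{2}$ produces the sought bound $|h|(p) \leq C_\lambda(1-p^2)$.

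For general $l\geq 1$ the same argument applies with $|h|$ replaced by $|\nabla^{(l)}h|$ in $Y$. The required global bound $\|\nabla^{(l)}h\|_{W^{1,2}} \leq C_{l,\lambda}$ is already available from the bootstrapping discussion preceding the proposition (iterated use of $\Delta_L \nabla^{(l)}h = -\lambda \nabla^{(l)}h$ together with radial symmetry), and the $\liminf$ argument carries over once $\nabla^{(l)}h \in L^2$. The main subtlety is precisely the choice of weight in $Y$: without the extra factor of $(1-p^2)$ the argument only yields the weaker bound $|h|\leq C(1-p^2)^{1/2}$; inserting this factor is what simultaneously keeps $\mathrm{div}(Y)$ dominated by $|h|^2 + |\nabla h|^2$ and produces the sharp $(1-x^2)^{-2}$ weight at the boundary.
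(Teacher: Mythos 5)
Your proposal follows the paper's own route: apply the divergence theorem on $M^x=M\cap\{p>x\}$ with a vector field of the form $Y\sim |\nabla^{(l)}h|^2\,\dot g^{11}\partial_1$, bound the divergence pointwise by $|\nabla^{(l)}h|^2+|\nabla^{(l+1)}h|^2$, and dispose of the outer boundary contribution via the $\liminf$ argument already used in Proposition~4.2. The one genuine improvement is that you have spotted and resolved a real ambiguity in the paper's shorthand: as the paper writes it, $Y=|k|^2\dot g^{11}\partial_1$ is coordinate-dependent, and if interpreted naively in $p$-coordinates the boundary integrand at $\{p=x\}$ carries only a factor $(1-x^2)^{-1}$ rather than the needed $(1-x^2)^{-2}$; your choice $Y=(1-p^2)|h|^2\partial_1$ (which is, up to a bounded prefactor near $p=1$, the $r$-coordinate reading of the paper's $Y$) restores the correct weight, and your explicit computation of $\operatorname{div}(Y)$ together with the Kato-type inequality $2(1-p^2)\bigl|\partial_1|h|\bigr|\le|\nabla h|$ verifies the pointwise domination $|\operatorname{div}(Y)|\le C(|h|^2+|\nabla h|^2)$ that the paper simply asserts. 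In short: same method, but you have made the proof of Proposition~4.2's $p\ge\frac12$ step -- on which Proposition~4.6 leans -- actually check out.
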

\begin{proof}
We can prove this estimate by using the argument of proposition 4.2. Recall that: when $x \geq \frac{1}{2}$, we use divergence theorem by taking $Y=|k|^2\dot{g}^{11}\partial_1$ and $M^x=M\cap\{p>x\}$. We will have
\[
\frac{|k|^2}{(1-x^2)^2}\leq |\int_{M^x} div (Y)| \leq \int_{M^x}|\nabla k|^2+|k|^2.
\]
So we have $|k|(p)\leq C\|k\|_{W^{1,2}}(1-p^2)$.\\

For $l>1$, we replace $Y$ by $|\nabla^{(l)}k|^2\dot{g}^{11}\partial_1$ and follow the the same argument.
\end{proof}

\subsection{Short-time existence}
Now we can prove the short-time existence theorem. Our proof is based on work of Shi [1]. Here we quote two theorems given by Shi.
\begin{theorem}
Let $(M^n,\bar{g})$ be a complete noncompact manifold with $|Rm(\bar{g})|\leq k_0<\infty$, then there exists a constant $T(n,k_0)>0$ such that the Ricci flow equation
\begin{align*}
&\frac{\partial}{\partial t}g_{ij}(x,t)=-2R_{ij}(x,t) \mbox{  }x\in M;\\
&g_{ij}(x,0)=\bar{g}(x) \mbox{ }x\in M
\end{align*}
has a smooth solution on $0\leq t\leq T(n,k_0)$.
\end{theorem}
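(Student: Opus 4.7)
The plan is to follow Shi's original strategy, which rests on the DeTurck trick. Because the Ricci flow $\partial_t g = -2\mathrm{Ric}(g)$ is only weakly parabolic, owing to its diffeomorphism invariance, I would first replace it by the Ricci-DeTurck flow
\[
\frac{\partial}{\partial t}g_{ij} = -2R_{ij} + \nabla_i V_j + \nabla_j V_i,
\]
where $V^k = g^{jl}(\Gamma^k_{jl} - \bar{\Gamma}^k_{jl})$ and $\bar{\Gamma}$ denotes the Christoffel symbols of the fixed initial metric $\bar{g}$. As already used in Section~3.1, this modified system is strictly parabolic quasilinear in $g$; a symbol computation shows its principal part is $g^{kl}\nabla_k\nabla_l g_{ij}$.

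To construct a solution on the noncompact manifold $M$, I would exhaust $M$ by a nested family of relatively compact domains $\Omega_1 \subset \Omega_2 \subset \cdots$ with $\bigcup_n \Omega_n = M$, and solve the Ricci-DeTurck flow on each $\Omega_n \times [0, T_n]$ with initial data $\bar{g}$ and Dirichlet condition $g = \bar{g}$ on $\partial \Omega_n$. Short-time existence on each $\Omega_n$ follows from standard parabolic Schauder theory together with a fixed-point argument. The heart of the matter is then to derive a priori estimates that are uniform in $n$: first a $C^0$-estimate confining $g(\cdot,t)$ to a fixed neighborhood of $\bar{g}$ on a time interval $[0,T]$ with $T = T(n, k_0)$; then Shi's interior derivative estimates of the form
\[
|\nabla^{(m)} Rm|(x,t) \leq \frac{C(n,m,k_0)}{t^{m/2}}, \qquad t \in (0,T],
\]
obtained by applying the maximum principle to carefully weighted polynomial combinations of $|\nabla^{(j)} Rm|^2$, with cutoff functions chosen so the estimates are independent of the artificial boundary $\partial\Omega_n$. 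With these bounds in hand, Arzel\`a--Ascoli extracts a smooth limiting Ricci-DeTurck solution on all of $M \times [0,T]$.

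Finally, to recover an honest Ricci flow, I would solve the DeTurck ODE
\[
\frac{\partial}{\partial t}\varphi_t(x) = -V(\varphi_t(x),t), \qquad \varphi_0 = \mathrm{id},
\]
and set $\hat{g}(t) = \varphi_t^{*}\, g(t)$; the DeTurck computation recorded in Section~3.1 then shows that $\hat g$ satisfies $\partial_t \hat g = -2\mathrm{Ric}(\hat g)$. The main obstacle is the derivation of Shi's interior derivative estimates in a form uniform over the exhaustion and independent of the artificial boundary data. This requires a delicate inductive bootstrap built on cleverly constructed auxiliary quantities such as $A|Rm|^2 + t|\nabla Rm|^2$ and their higher-order analogues, and it constitutes the technical core of the theorem; once these estimates are secured, the remaining steps are comparatively routine parabolic theory.
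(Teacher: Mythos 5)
The paper does not prove this statement; it is quoted verbatim from Shi's paper (reference [1]) as background, with no proof given. Your outline is an accurate summary of Shi's original argument: apply the DeTurck trick to obtain a strictly parabolic quasilinear system, exhaust $M$ by compact domains with Dirichlet boundary data $\bar g$, derive a priori $C^0$ and higher-order interior derivative estimates of the form $|\nabla^{(m)} Rm|\le C_m t^{-m/2}$ uniform in the exhaustion, pass to a limit, and then pull back by the family of DeTurck diffeomorphisms to recover a genuine Ricci flow — so the proposal is correct and is essentially the same approach as the cited source.
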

We have a similar result for Ricci-de Turck flows which is also given by Shi [1].
\begin{theorem}
Let $(M^n,\bar{g})$ be a complete noncompact manifold with $|Rm(\bar{g})|\leq k_0<\infty$, then there exists a constant $T(n,k_0)>0$ such that the Ricci-de Turck flow equation
\begin{align*}
&\frac{\partial}{\partial t}g_{ij}(x,t)=-2R_{ij}(x,t)+\nabla_iV_j+\nabla_jV_i\mbox{  }x\in M;\\
&g_{ij}(x,0)=\bar{g}(x) \mbox{ }x\in M
\end{align*}
where $V_i=g_{ik}g^{pl}(\Gamma^k_{pl}-\bar{\Gamma}^k_{pl})$, has a smooth solution on $0\leq t\leq T(n,k_0)$.
\end{theorem}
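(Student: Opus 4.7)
The plan is to exploit the fact that, unlike the Ricci flow, the Ricci-de Turck modification is strictly parabolic, so that one can run the standard quasilinear parabolic machinery on an exhaustion by bounded domains and then pass to the limit using a priori estimates controlled by $k_0$.

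First I would verify strict parabolicity. Write the right-hand side $-2R_{ij} + \nabla_i V_j + \nabla_j V_i$ in local coordinates in terms of $g$ and its derivatives with $\bar g$ as the fixed background metric. A direct calculation (using $V^k = g^{pl}(\Gamma^k_{pl} - \bar{\Gamma}^k_{pl})$) shows that the terms responsible for the degeneracy of the Ricci flow's symbol cancel, and the leading-order part of the evolution becomes $g^{ab}\bar\nabla_a\bar\nabla_b g_{ij}$ plus lower-order polynomial expressions in $g$, $g^{-1}$, and their first derivatives together with curvature terms of $\bar g$. The principal symbol is then $|\xi|_g^2 \cdot \mathrm{Id}$ on $\mathrm{Sym}^2(T^*M)$, i.e.\ strictly parabolic as long as $g$ remains uniformly comparable to $\bar g$.

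Next I would set up an exhaustion $\Omega_1 \Subset \Omega_2 \Subset \cdots \Subset M$ by smooth relatively compact domains, and on each $\Omega_k$ solve the Dirichlet initial-boundary value problem
\begin{equation*}
\partial_t g^{(k)} = -2\mathrm{Ric}(g^{(k)}) + \mathcal{L}_{V(g^{(k)})} g^{(k)}, \quad g^{(k)}|_{t=0} = \bar g, \quad g^{(k)}|_{\partial\Omega_k} = \bar g,
\end{equation*}
using the Ladyzhenskaya-Solonnikov-Ural'tseva theory for quasilinear strictly parabolic systems with smooth initial/boundary data. This yields, for each $k$, a smooth solution on a (possibly $k$-dependent) time interval.

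The core step is then to derive uniform-in-$k$ a priori estimates on a fixed time interval $[0,T]$ with $T = T(n,k_0)$. I would first use a maximum-principle argument applied to the tensor $h^{(k)} = g^{(k)} - \bar g$, deducing from the structure of the Ricci-de Turck equation and the bound $|\mathrm{Rm}(\bar g)| \le k_0$ that $|h^{(k)}|_{\bar g}$ stays small on $[0,T]$ provided $T$ is chosen small depending only on $n$ and $k_0$; this guarantees uniform equivalence of $g^{(k)}$ and $\bar g$, hence uniform parabolicity. With this $C^0$ control in hand, I would then run Shi's Bernstein-type argument on the quantities $|\bar\nabla^m g^{(k)}|^2$, which yields, on any compact subset $K \subset M$ and for $t \in [\tau, T]$ with $\tau > 0$, uniform bounds on all covariant derivatives of $g^{(k)}$ in terms of $n$, $k_0$, $\mathrm{dist}(K,\partial\Omega_k)$, and $\tau$. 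Finally, Arzel\`a-Ascoli extracts a smooth subsequential limit $g = \lim g^{(k)}$ on $M \times [0,T]$ solving the Ricci-de Turck equation with $g(\cdot,0) = \bar g$, and smoothness down to $t=0$ follows from standard parabolic regularization.

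The main obstacle is clearly the uniform a priori estimate step: verifying that the $C^0$ bound on $h^{(k)}$ can be pushed through a maximum principle that applies on a noncompact manifold with boundary (one must either work with a cutoff function and control its contribution via $k_0$, or use the fact that $h^{(k)}$ vanishes on $\partial\Omega_k$ to apply a parabolic comparison directly), and then feeding that bound into Shi's iterative derivative estimates without losing uniformity as $\Omega_k \nearrow M$. Everything else -- strict parabolicity, existence on bounded domains, and the Arzel\`a-Ascoli extraction -- is essentially formal once those estimates are in place.
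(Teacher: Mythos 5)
Your outline matches what the paper does here, which is simply to cite the result from Shi~[1] (see the paper's Remark~4.10, which explicitly recalls that Shi solves Dirichlet problems on an exhaustion $D_i\subset M$, $\cup D_i=M$, and passes to the limit). Your proposal reconstructs Shi's strategy accurately: strict parabolicity of the de~Turck-modified flow in harmonic-map-type gauge, existence on bounded domains via Ladyzhenskaya--Solonnikov--Ural'tseva, a uniform $C^0$ bound on $g-\bar g$ via maximum principle on a time interval depending only on $n$ and $k_0$, Bernstein-type interior derivative estimates uniform in the exhaustion parameter, and Arzel\`a--Ascoli to extract the global solution. This is the same route as the cited source, so no further comparison is needed.
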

Using this result, we get
\begin{theorem}
Let $(M,g_0)$ be the Euclidean Schwarzschild metric, $h$ be the eigenvector (2-tensor) obtained from theorem 3.5 and $\varepsilon>0$. The Ricci-de Turck equation
\begin{align*}
\frac{\partial}{\partial t}g_{ij}&=-2R_{ij}+\nabla_i V_j^{(\varepsilon)}+\nabla_j V_i^{(\varepsilon)}\\
g(x,0)&=(g_0+\varepsilon h)
\end{align*}
where $V_i^{(\varepsilon)}=g_{ik}g^{pl}(\Gamma^k_{pl}-\Gamma_{pl}^{k(\varepsilon)})$, has a solution for short-time when $\varepsilon$ is small enough.
\end{theorem}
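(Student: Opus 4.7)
The plan is to apply Shi's short-time existence result (Theorem 4.8) with initial metric $\bar{g}_\varepsilon := g_0+\varepsilon h$ and background Christoffel symbols $\Gamma^{k(\varepsilon)}_{pl}$ equal to the Christoffels of $\bar{g}_\varepsilon$ itself. What must be verified is that, for all sufficiently small $\varepsilon>0$, $(M,\bar{g}_\varepsilon)$ is a smooth, complete Riemannian manifold with $|Rm(\bar{g}_\varepsilon)|$ bounded by a constant \emph{independent} of $\varepsilon$; granted this, Theorem 4.8 produces the desired Ricci-de Turck flow on a time interval $[0,T]$ whose length depends only on that curvature bound. Uniformity in $\varepsilon$ is important because the proof of the Main Theorem will require taking $\varepsilon\to 0$ through a sequence of such flows.

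\textbf{Smoothness, positivity, and completeness of $\bar{g}_\varepsilon$.} The eigentensor $h$ is smooth on $M$ by elliptic regularity applied to $\Delta_L h=-\lambda h$, and Propositions 4.4--4.5 show that $|\nabla^{(l)}h|\to 0$ as $p\to 0$ (with rate $p$) and as $p\to 1$ (with rate $1-p^2$), so in any smooth chart adapted to the bolt $\{r=1\}$ the perturbation $\varepsilon h$ extends smoothly and the asymptotic structure at infinity is preserved. By Corollary 4.3, $|h|_{g_0}\leq C_1$ globally, so choosing $\varepsilon_0<1/(2C_1)$ yields for every $\varepsilon\in(0,\varepsilon_0]$
\[
\tfrac{1}{2}g_0\ \leq\ \bar{g}_\varepsilon\ \leq\ \tfrac{3}{2}g_0,
\]
whence $\bar{g}_\varepsilon$ is positive definite and quasi-isometric to $g_0$; since $(M,g_0)$ is complete, so is $(M,\bar{g}_\varepsilon)$.

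\textbf{Uniform curvature bound.} Expanding the Riemann tensor schematically around $g_0$,
\[
Rm(\bar{g}_\varepsilon)=Rm(g_0)+\varepsilon\,P_1(g_0^{-1},h,\nabla h,\nabla^2 h)+\varepsilon^2\,P_2(g_0^{-1},\bar{g}_\varepsilon^{-1},h,\nabla h,\nabla^2 h),
\]
where $\nabla$ is the Levi-Civita connection of $g_0$ and $P_1,P_2$ are tensor-valued polynomial expressions in their arguments. Section 2 bounds $|Rm(g_0)|$; Corollary 4.3 bounds $|h|$, $|\nabla h|$, $|\nabla^2 h|$ in the $g_0$-norm by constants depending only on $\lambda$; and the quasi-isometry from the previous step bounds $\bar{g}_\varepsilon^{-1}$ in terms of $g_0^{-1}$. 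Combining these gives $|Rm(\bar{g}_\varepsilon)|_{\bar{g}_\varepsilon}\leq k_0$ with $k_0=k_0(\lambda,g_0)$ independent of $\varepsilon\in(0,\varepsilon_0]$. Theorem 4.8 then supplies a smooth Ricci-de Turck flow starting from $\bar{g}_\varepsilon$ on $[0,T(4,k_0)]$ with $T$ independent of $\varepsilon$, which completes the proof.

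\textbf{Main obstacle.} The only delicate point is making the curvature bound uniform all the way up to the bolt $p=0$, where the coordinate expressions for $\Gamma$ and $Rm$ in $(t,r)$ involve formally singular factors like $(1-r^{-1})^{-1}$. The rate-$p$ decay of $|\nabla^{(l)}h|$ from Proposition 4.4 is precisely what is needed to absorb these apparent singularities and reduce the estimate to a routine smooth bound in a polar chart at the bolt; Proposition 4.5 plays the analogous role at the asymptotically flat end $p=1$. Once these two ends are handled, interior bounds on compact subsets are immediate from Corollary 4.3 and the explicit Christoffel formulas of Section 2.
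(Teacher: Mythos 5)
Your approach matches the paper's one-line proof: invoke Shi's Ricci--de Turck short-time existence theorem (Theorem 4.8) with $\bar g=g_0+\varepsilon h$, after observing that the pointwise bounds on $|\nabla^{(m)}h|$ for $m\le 3$ (Corollary 4.4 in the paper) yield completeness and a curvature bound $|Rm(\bar g)|\le k_0$ uniform in small $\varepsilon$. The quasi-isometry step and the Taylor expansion of $Rm(\bar g_\varepsilon)$ that you spell out are precisely the content the paper asserts without proof in Remark 4.12 and Proposition 4.13, so you have filled in details rather than taken a different route.
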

\begin{proof} Since we have the boundedness of $|\nabla^{(m)}h|$ for $m=0,1,2,3$ by corollary 4.4, we can prove this theorem by applying Shi's result directly.
\end{proof}

\begin{remark}
We can prove that $g$ is diagonal and radially symmetric by following Shi's argument carefully. In [1], Shi proves the existence of solutions by solving the Dirichlet problems on each $D_i\subset M$, $i\in \mathbb{N}$, where $D_i$ are compact subsets and $\cup D_i = M$. We use $p$-coordinate in our case, we choose a sequences such that $ s_i\uparrow 1$ as $i\rightarrow \infty$. Now if we define $D_i=M\cap \{p \leq s_i\}$, $g$ will be a limit of a sequence of 2-tensors $g_i$ with each $g_i$ satisfies Ricci-de Turck equation on $D_i$ and $|g_i -(g_0+\varepsilon h)|$ vanished on the boundary of $D_i$. Since the initial metric is diagonal and radially symmetric and $D_i$ are radially symmetric, we have $g_i$ are diagonal and radially symmetric. This implies $g$ is diagonal and radially symmetric.
\end{remark}
\begin{remark}
By choosing $D_{i}$ as above, since $g$ is the limit of a sequence $\{g_i\}$ where $|g_i -(g_0+\varepsilon h)|$ vanished on the boundary of $D_i$, we can prove that our solution $g$ actually satisfies $|g-(g_0+\varepsilon h)|\rightarrow 0$ as $r\rightarrow 1$. In the next section, we will prove that  $|g-(g_0+\varepsilon h)|\rightarrow 0$ as $r\rightarrow \infty$ by using the comparison theorem.
\end{remark}
\begin{remark}
 If we compute the curvature $R_{ijij}(g_0+\varepsilon h)$ directly, we will have $|R_{ijij}(g_0+\varepsilon h)-R_{ijij}(g_0)|\leq C \varepsilon$.\\  
\end{remark}
We have the following proposition:
\begin{pro}
Let $k \in C^{\infty}_0(Sym^2(T^*M))$ be a radially symmetric 2-tensor which satisfies
\begin{align}\label{cod1}
|\nabla^{(m)}  k|  \leq \varepsilon\mbox{ for }m=0,1,2,3;
\end{align}
for some $\varepsilon>0$. Then there is universal constants $C>0$ and $N>0$ such that
\[
|R_{ijij}(g_0+k)-R_{ijij}(g_0)|\leq C\varepsilon
\]
provided $\varepsilon<N$.
\end{pro}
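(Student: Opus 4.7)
\smallskip
\noindent\textbf{Proof plan.} The approach is to express the difference of the two Riemann tensors as an invariant tensor quantity built from the connection difference, bound it using the pointwise hypotheses on $|\nabla^{(m)}k|$, and then pass to the coordinate components $R_{ijij}$.

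First, I will set $\tilde g=g_0+k$. Since $|k|_{g_0}\le\varepsilon$, a choice $N>0$ small enough ensures that for all $\varepsilon<N$, $\tilde g$ is a Riemannian metric and the Neumann series
\[
\tilde g^{-1}=g_0^{-1}-g_0^{-1}k\,g_0^{-1}+\cdots
\]
converges with $|\tilde g^{-1}-g_0^{-1}|_{g_0}\le C\varepsilon$. The Levi-Civita connections of $\tilde g$ and $g_0$ then differ by the tensor
\[
A^l_{ij}=\tfrac{1}{2}\tilde g^{lm}\bigl(\nabla_i k_{jm}+\nabla_j k_{im}-\nabla_m k_{ij}\bigr),
\]
with $\nabla$ the $g_0$-Levi-Civita connection, and it satisfies $|A|_{g_0}\le C|\nabla k|\le C\varepsilon$. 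Differentiating and using $\nabla(\tilde g^{-1})=-\tilde g^{-1}(\nabla k)\tilde g^{-1}$ (valid because $\nabla g_0=0$), one gets
\[
|\nabla A|_{g_0}\le C\bigl(|\nabla^2 k|+|\nabla k|^2\bigr)\le C\varepsilon.
\]

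Next, I will apply the standard identity relating the Riemann tensors of two Levi-Civita connections,
\[
R^l_{\,ijk}(\tilde g)-R^l_{\,ijk}(g_0)=\nabla_i A^l_{jk}-\nabla_j A^l_{ik}+A^l_{im}A^m_{jk}-A^l_{jm}A^m_{ik},
\]
which with the previous bounds yields the invariant estimate $|R(\tilde g)-R(g_0)|_{g_0}\le C(|\nabla A|_{g_0}+|A|_{g_0}^2)\le C\varepsilon$. To recover the bound on the fully-covariant coordinate components I will lower an index, writing
\[
R_{\ell ijk}(\tilde g)-R_{\ell ijk}(g_0)=k_{\ell m}R^m_{\,ijk}(\tilde g)+(g_0)_{\ell m}\bigl[R^m_{\,ijk}(\tilde g)-R^m_{\,ijk}(g_0)\bigr].
\]
The second bracket is $\le C\varepsilon$ by the previous step, and the first is bounded by $|k|\cdot|R(\tilde g)|\le\varepsilon\bigl(|R(g_0)|+C\varepsilon\bigr)\le C\varepsilon$, using the uniform bound $|Rm(g_0)|\le C$ recorded in Section 2.

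The main obstacle is confirming that all constants involved can be taken universal, i.e.\ depending only on $g_0$. This reduces to uniform pointwise bounds on $|Rm(g_0)|$ (already provided in Section 2) and on $|\nabla Rm(g_0)|$; the latter follows from the explicit formulas of Section 2 together with the smoothness of $g_0$ near $r=1$ in the $p$-coordinate of Remark 2.1 and the asymptotically flat geometry as $r\to\infty$. Note that the hypothesis $|\nabla^{(3)}k|\le\varepsilon$ is not needed for this particular proposition; it is included for consistency with the a priori bounds of Corollary 4.4 and Propositions 4.5--4.6, to which it is applied in later sections.
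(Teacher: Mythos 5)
The paper does not actually supply a proof of this proposition: it appears with no proof attached, immediately after Remark 4.12, which merely asserts ``if we compute the curvature $R_{ijij}(g_0+\varepsilon h)$ directly, we will have $|R_{ijij}(g_0+\varepsilon h)-R_{ijij}(g_0)|\le C\varepsilon$.'' Your argument via the connection-difference tensor $A^l_{ij}$ and the standard formula
\[
R^l_{\,ijk}(\tilde g)-R^l_{\,ijk}(g_0)=\nabla_iA^l_{jk}-\nabla_jA^l_{ik}+A^l_{im}A^m_{jk}-A^l_{jm}A^m_{ik}
\]
is the natural way to make that ``direct computation'' rigorous, and your pointwise bounds $|A|_{g_0}\le C\varepsilon$, $|\nabla A|_{g_0}\le C(|\nabla^2 k|+|\nabla k|^2)\le C\varepsilon$ are correct (the Neumann-series control of $\tilde g^{-1}$ is also needed and you handle it properly). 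Your observation that the hypothesis on $\nabla^{(3)}k$ is not used in this particular estimate is also accurate: the curvature difference involves at most one covariant derivative of $A$, hence two of $k$.

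One caveat, which is really a defect in the statement rather than in your proof: what you establish is the invariant bound $|Rm(g_0+k)-Rm(g_0)|_{g_0}\le C\varepsilon$, after lowering an index (your decomposition $R_{\ell ijk}(\tilde g)-R_{\ell ijk}(g_0)=k_{\ell m}R^m_{\,ijk}(\tilde g)+(g_0)_{\ell m}[R^m_{\,ijk}(\tilde g)-R^m_{\,ijk}(g_0)]$ is fine when all terms are measured in the $g_0$-norm). The proposition as literally written asks for a uniform bound on the raw coordinate components $R_{ijij}$, which is not implied by the invariant bound: the two differ by the factors $\dot g_{ii}\dot g_{jj}$, and these are unbounded on $M$ (indeed $R_{2323}(g_0)=r\sin^2\theta$ is itself unbounded, so the literal claim cannot hold). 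The invariant version you prove is the one the paper actually uses later (Theorem 5.18 invokes the proposition to bound $|Rm(g_0+\delta h)-Rm(g_0)|$ by $C\delta$), so your proof matches the intended content; it would be worth flagging that the statement should read $|Rm(g_0+k)-Rm(g_0)|\le C\varepsilon$ rather than a bound on individual coordinate entries.
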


\section{Long-Time Existence:}
To prove the long-time existence, first we notice that if the initial curvature $|Rm(\bar{g})|\leq k_0$, then there is a Ricci-de Turck flow $g$ defined on $M\times [0,T]$ where $T$ depends only on $k_0$. Now we check whether $|Rm(g(T))|\leq k_0$ or not. If $|Rm(T)|\leq k_0$, we will prove that $g$ can be extended to $2T$. Then we check the validity of $|Rm(g(2T))|\leq k_0$. If $|Rm(g(2T))|\leq k_0$, we will prove that $g$ can be extended to $3T$. Continue this process, we can prove that the time interval can be extended to be large as desired if we have some control of $|Rm(g)|(t)$.\\

 We start with several definitions.

\subsection{Estimate cones:} In this subsection, we define a special set called the estimate cone. We can see in the following sections that the Ricci-de Turck flow with initial value in an estimate cone will stay in it. Moreover, the structure of this cone will help us to find a convergent subsequence of Ricci-de Turck flows in the next section.\\

\begin{definition}
Let $E$ be a Banach space and $h\in E$. A cone along $h$ with opening $M$ is the set 
\[
C_{h,M}=\{\varphi\in E | \inf_{\delta \in \mathbb{R}^+\cup \{0\}}\frac{\|\varphi-\delta h\|}{\delta}\leq M \}
\]
We also denote the translation of this cone $C_{h,M}(f)= C_{h,M}+f$ and call $h$ the axis of this cone.
\end{definition}
Now we define $E$. Let $k\in C^{\infty}_0(Sym^2T^*M)$ such that $k$ is radially symmetric and diagonal. We consider the estimate cone under the following norm:
\[
\|k\|_{W^{1,2}}=\|k\|_2+\|\nabla k\|_2
\]
Then we define our function space as
\begin{definition}
We define $\mathbb{H}=\mbox{closure of }\{k\in C^{\infty}_0(Sym^2(T^*M))| \|k\|_{W^{1,2}} <\infty \mbox{ and }k \mbox{ is symmetric and diagonal}\}$
\end{definition}
Also, we change our time variable.
\begin{definition}
We define $\delta(t)$ to be $e^{-\lambda t}$ for all $t\in \mathbb{R}$.
\end{definition}

Now we set our axis $h$ to be the eigenvector which is given by theorem 3.5 and change our time variable by $\delta=e^{-\lambda t}$.
 We are interested in those $\varphi \in C_{h,M}(g_0)$.\\

\subsection{Shi's estimates}
Based on theorem 2.5 in Shi's paper and his argument, we have the following theorems.
\begin{theorem} Suppose the conditions of theorem 4.8 are fulfilled. Then for any $\rho>0$, there is a $T(\rho,n,k_0)>0$ such that
\[
|g(x,t)-\bar{g}(x)| \leq \rho
\]
for $x\in M$, $0\leq t\leq T(\rho,n,k_0)$.
\end{theorem}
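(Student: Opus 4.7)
The strategy is elementary once the conclusion of Theorem 4.8 is in hand: integrate the flow equation in time and use Shi's a priori curvature bound on the existence interval.

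First, invoke Theorem 4.8 to obtain a smooth Ricci flow $g(\cdot,t)$ on $M\times[0,T(n,k_0)]$ with $g(x,0)=\bar g(x)$. The same machinery in [1] that produces Theorem 4.8 also delivers a uniform curvature bound
\[
\sup_{M\times[0,T(n,k_0)]}|Rm(g)(x,t)| \le C(n,k_0),
\]
and in particular $|Ric(g)(x,t)|\le C'(n,k_0)$ on the whole existence interval, with $C'$ depending only on $n$ and $k_0$.

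Second, integrate the Ricci flow equation along the time direction. For any fixed $x\in M$ and any $\tau\le T(n,k_0)$, the fundamental theorem of calculus applied to each component $g_{ij}(x,t)-\bar g_{ij}(x)$ gives
\[
g_{ij}(x,\tau)-\bar g_{ij}(x) \;=\; -2\int_0^\tau R_{ij}(x,s)\,ds,
\]
so that, measuring the tensor norm with respect to $\bar g$,
\[
|g(x,\tau)-\bar g(x)| \;\le\; 2\int_0^\tau |Ric(g)(x,s)|\,ds \;\le\; 2C'(n,k_0)\,\tau.
\]

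Finally, set $T(\rho,n,k_0)=\min\{T(n,k_0),\,\rho/(2C'(n,k_0))\}$; then $|g(x,t)-\bar g(x)|\le\rho$ for every $x\in M$ and every $t\in[0,T(\rho,n,k_0)]$, which is the desired inequality. The only real content is the uniform bound on $|Rm(g)|$, which is already encoded in Shi's argument for Theorem 4.8; so there is no new obstacle. The minor bookkeeping point is to note that $C'$ depends only on $n$ and $k_0$ (not on $\rho$), so that the choice of $T(\rho,n,k_0)$ depends on the advertised parameters alone. An analogous argument, with the extra term $\nabla_iV_j+\nabla_jV_i$ controlled by Shi's derivative-of-metric estimates, would cover the Ricci-de Turck setting of Theorem 4.9 if needed later.
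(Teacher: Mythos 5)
The paper does not actually prove Theorem 5.4 --- it simply states that the result is ``based on Theorem 2.5 in Shi's paper and his argument'' [1], so there is no internal proof to compare against. Your outline captures the essential mechanism: Shi's short-time existence comes packaged with a uniform curvature bound $\sup_{M\times[0,T(n,k_0)]}|Rm(g)|\le C(n,k_0)$, and integrating $\partial_tg=-2Ric$ in time turns that bound into closeness of $g(t)$ to $\bar g$.

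There is, however, a circularity in the integration step as written. After integrating componentwise you claim $|g(x,\tau)-\bar g(x)|_{\bar g}\le 2\int_0^\tau |Ric(g(s))(x)|\,ds$ and invoke the curvature bound. But Shi's bound controls $|Ric(g(s))|_{g(s)}$, the norm in the evolving metric, whereas your integrand must be measured in $\bar g$. Converting one to the other requires knowing that $g(s)$ and $\bar g$ are uniformly equivalent --- which is essentially what you are trying to prove. The standard fix is a continuity/Gronwall argument: let $T^*\le T(n,k_0)$ be the largest time such that $\tfrac12\bar g\le g(s)\le 2\bar g$ for all $s\in[0,T^*]$; on that interval the two norms are comparable, so the integration yields $|g(t)-\bar g|_{\bar g}\le C''(n,k_0)t$, and this estimate, for $t$ small, is incompatible with $T^*$ being small --- hence $T^*\ge T(\rho,n,k_0)$ once $T(\rho,n,k_0)$ is chosen appropriately. (Shi himself controls $\operatorname{tr}_{\bar g}g$ and $\operatorname{tr}_{g}\bar g$ via the maximum principle, a cleaner packaging of the same idea.) With that continuity step added, your argument is complete and is precisely the one the paper is implicitly citing.
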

Shi also gave a proof of the derivative estimates. See [1].
\begin{theorem}
Let $g$ be the solution of equation in theorem 4.9. and $|Rm(\bar{g})|\leq k_0$. Then there is a $T(n,k_0)>0$ depends only on $n$ and $k_0$ such that
\[
\sup_{(x,t)\in M\times [0,T(n,k_0)]}|\bar{\nabla} g| \leq c(n,k_0),
\]
and
\[
\sup_{(x,t)\in M\times[0,T(n,k_0)]}|\frac{\partial}{\partial t}g| \leq c(n,k_0),
\]
where $c(n,k_0)$ depends only on $n$ and $k_0$.
\end{theorem}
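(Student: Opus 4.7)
The plan is to follow Shi's strategy for interior derivative estimates of the Ricci--de Turck flow, adapted to our complete noncompact setting. Since Theorem 5.1 already gives $|g-\bar g|\le \rho$ on a short time interval, one knows that $g^{ij}$ and $\bar g^{ij}$ are uniformly equivalent, so the Ricci--de Turck equation is a strictly parabolic system for $g_{ij}$ of the schematic form
\[
\frac{\partial}{\partial t}g_{ij}=g^{pq}\bar\nabla_p\bar\nabla_q g_{ij}+B_{ij}(g^{-1},\bar g^{-1},\bar\nabla g,Rm(\bar g)),
\]
where $B_{ij}$ is polynomial in its arguments, quadratic in $\bar\nabla g$, and linear in the background curvature. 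From this I would derive the scalar evolution inequality for $\Phi:=|\bar\nabla g|^2$ (all norms and covariant derivatives with respect to $\bar g$):
\[
\frac{\partial}{\partial t}\Phi-g^{pq}\bar\nabla_p\bar\nabla_q\Phi\le -2|\bar\nabla\bar\nabla g|^2+C(n,k_0)\bigl(1+\Phi\bigr)\Phi+C(n,k_0),
\]
using the bounds on $|Rm(\bar g)|$ and $|\bar\nabla Rm(\bar g)|$ which in our case are controlled by the Euclidean Schwarzschild geometry.

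Second, following Shi I would introduce an auxiliary function of the form
\[
F:=\bigl(A-|g-\bar g|^2\bigr)\Phi,
\]
with $A=A(n,k_0)$ chosen large enough that $A/2\le A-|g-\bar g|^2\le A$ on the time interval supplied by Theorem 5.1. Computing $(\partial_t-g^{pq}\bar\nabla_p\bar\nabla_q)F$ and combining the evolution of $\Phi$ with the elementary evolution of $|g-\bar g|^2$, the good negative term $-2|\bar\nabla\bar\nabla g|^2$ absorbs the cross term arising from $2g^{pq}\bar\nabla_p(A-|g-\bar g|^2)\bar\nabla_q\Phi$ via Cauchy--Schwarz, leaving an inequality of the form
\[
\frac{\partial}{\partial t}F-g^{pq}\bar\nabla_p\bar\nabla_q F\le C(n,k_0)\bigl(1+F\bigr).
\]

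Third, because $M$ is noncompact I apply the maximum principle through a cutoff: fix a smooth radial cutoff $\eta_R$ supported in $\{p\le 1-1/R\}$ and form $F_R=\eta_R F+ e^{Kt}/R$; run the maximum principle on $F_R$ on the compact sets on which Shi constructs $g$ (see Remark 4.11), let $R\to\infty$, and use $F(0)\le A\Phi(0)\le C(n,k_0)$ (since the initial data $g_0+\varepsilon h$ has $|\bar\nabla g(0)|$ controlled via Corollary 4.4) to deduce $\sup_{M\times[0,T]}F\le C(n,k_0)$, hence the desired bound on $|\bar\nabla g|$. The time derivative estimate is then immediate: from the Ricci--de Turck equation itself,
\[
\Bigl|\frac{\partial}{\partial t}g\Bigr|\le 2|Ric(g)|+2|\bar\nabla V|\le C(n,k_0)\bigl(1+|\bar\nabla\bar\nabla g|+|\bar\nabla g|^2\bigr),
\]
and one obtains $|\bar\nabla\bar\nabla g|$ bounds by repeating the $\Phi$-argument with $\Phi$ replaced by $|\bar\nabla\bar\nabla g|^2$ and using the already-established bound on $|\bar\nabla g|$.

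The main obstacle is the bookkeeping in the evolution inequality for $F$: one has to verify that the ``bad'' terms of the form $(\bar\nabla g)^{\ast 4}$, $(\bar\nabla g)^{\ast 2}\cdot Rm(\bar g)$, and $(\bar\nabla g)\cdot(\bar\nabla\bar\nabla g)\cdot (g-\bar g)$ can all be controlled by the good term $-|\bar\nabla\bar\nabla g|^2$ together with a small multiple of $F$ itself, provided $A$ and the time interval are chosen so that $|g-\bar g|$ is sufficiently small. This absorption relies crucially on Theorem 5.1, and is precisely the step where Shi's argument in [1] is invoked; the adaptation to our setting is routine once one has the uniform bounds on $|Rm(\bar g)|$ and its covariant derivatives from Section 2.
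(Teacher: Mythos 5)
The paper does not prove this theorem; it simply invokes Shi's derivative estimates, writing ``Shi also gave a proof of the derivative estimates. See [1].'' So there is no in-paper argument to compare against. Your reconstruction correctly identifies the right tool (an auxiliary function of the form $\psi(|g-\bar g|^2)\,|\bar\nabla g|^2$ and a maximum principle with cutoff), which is indeed Shi's strategy, and your use of the short-time $|g-\bar g|\le\rho$ bound to get uniform ellipticity is correct.

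However, the sign in your auxiliary function is wrong, and this invalidates the key intermediate claim. You take $F=(A-|g-\bar g|^2)\Phi$ and assert that, after Cauchy--Schwarz, the good term $-2|\bar\nabla\bar\nabla g|^2$ leaves you with $(\partial_t-g^{pq}\bar\nabla_p\bar\nabla_q)F\le C(1+F)$. Write $u=|g-\bar g|^2$. One computes $(\partial_t-g^{pq}\bar\nabla_p\bar\nabla_q)u=2\langle(\partial_t-g^{pq}\bar\nabla_p\bar\nabla_q)(g-\bar g),g-\bar g\rangle-2g^{pq}\langle\bar\nabla_p(g-\bar g),\bar\nabla_q(g-\bar g)\rangle\le -c\Phi+C\rho$. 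In your $F$ this enters with a minus sign: $-\Phi\,(\partial_t-\Delta)u$, and since a \emph{lower} bound for $(\partial_t-\Delta)u$ is $\ge -C\Phi-C$, the best you can say is $-\Phi\,(\partial_t-\Delta)u\le C\Phi^2+C\Phi$, which is a bad term. Meanwhile $(\partial_t-\Delta)\Phi\le -|\bar\nabla\bar\nabla g|^2+C\Phi^2+C$ already contributes $+C\Phi^2$, and Cauchy--Schwarz on the cross term $2\nabla u\cdot\nabla\Phi$ costs yet another $+C|g-\bar g|^2\Phi^2$ after absorbing $|\bar\nabla\bar\nabla g|^2$. Nothing cancels the $\Phi^2$ contributions, so what one actually obtains is a Riccati inequality $(\partial_t-\Delta)F\le CF^2+C$, not the linear one you wrote. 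To get the linear inequality you must use $F=(A+|g-\bar g|^2)\Phi$: then $+\Phi\,(\partial_t-\Delta)u\le -c\Phi^2+C\Phi$ supplies the essential $-c\Phi^2$ that kills the other quadratic terms once $\rho$ and $A$ are chosen appropriately. The quadratic inequality your sign produces does still yield a short-time bound via ODE comparison, so the overall conclusion survives, but your comparison function $e^{Kt}/R$ and the asserted linear evolution inequality are tied to the wrong sign and would need to be reworked accordingly.
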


\subsection{Evolution equations}
Let $C_1$ be the constant given in corollary 4.4 and $\rho=\min\{\frac{1}{4},\frac{1}{8C_1}\}$. We fix a $\varepsilon <\frac{\rho}{-\lambda}$ where $\lambda$ is the eigenvalue given by theorem 3.5. Let $g$ be the solution of the Ricci-de Turck equation which is given by theorem 4.9 and then shift our time by replacing $t$ by $t_0+t$. So we have $g$ is a Ricci-de Turck flow defined on $M\times [t_0,t_0+T]$.\\

We start with the Ricci-de Turck equation. First of all, the Ricci-de Turck equation is given by
\begin{align*}
\frac{\partial}{\partial t}g_{ij}&=-2R_{ij}+\nabla_iV_j+\nabla_jV_i\\
&= g^{\alpha\beta}\bar{\nabla}_\alpha\bar{\nabla}_\beta g_{ij}-g^{\alpha\beta}g_{ip}\bar{g}^{pq}\bar{R}_{j\alpha q\beta}-g^{\alpha\beta}g_{jp}\bar{g}^{pq}\bar{R}_{i\alpha q\beta}\\
&\mbox{ }\mbox{ }+\frac{1}{2}g^{\alpha\beta}g^{pq}(\bar{\nabla}_ig_{p\alpha}\bar{\nabla}_jg_{q\beta}+2\bar{\nabla}_\alpha g_{jp}\bar{\nabla}_qg_{i\beta}-2\bar{\nabla}_\alpha g_{jp}\bar{\nabla}_\beta g_{iq}\\
&\mbox{ }\mbox{ } \mbox{ }\mbox{ }\mbox{ }\mbox{ }\mbox{ }\mbox{ }\mbox{ }\mbox{ }\mbox{ }\mbox{ } \mbox{ }\mbox{ }\mbox{ }\mbox{ }\mbox{ }\mbox{ }\mbox{ }\mbox{ }\mbox{ }\mbox{ } \mbox{ }\mbox{ }\mbox{ }\mbox{ }\mbox{ }\mbox{ }\mbox{ }\mbox{ } \mbox{ } \mbox{ }\mbox{ }-2\bar{\nabla}_jg_{p\alpha}\bar{\nabla}_\beta g_{iq}-2\bar{\nabla}_ig_{p\alpha}\bar{\nabla}_\beta g_{jp})
\end{align*}
with initial data $\bar{g}=g_0+\varepsilon h$. Here we use $\nabla$, $\bar{\nabla}$ and $\dot{\nabla}$ to denote the covariant derivatives with respect to $g_{ij}$, $\bar{g}_{ij}$ and $(g_0)_{ij}$. Since the metric $g_{ij}$ is diagonal, we have
\begin{align*}
\frac{1}{2}&g^{\alpha\beta}g^{pq}(\bar{\nabla}_ig_{p\alpha}\bar{\nabla}_jg_{q\beta}+2\bar{\nabla}_\alpha g_{jp}\bar{\nabla}_qg_{i\beta}-2\bar{\nabla}_\alpha g_{jp}\bar{\nabla}_\beta g_{iq}\\
&\mbox{ }\mbox{ } \mbox{ }\mbox{ }\mbox{ }\mbox{ }\mbox{ }\mbox{ }\mbox{ }\mbox{ }\mbox{ }\mbox{ } \mbox{ }\mbox{ }\mbox{ }\mbox{ }\mbox{ }\mbox{ }\mbox{ }\mbox{ }\mbox{ }\mbox{ } \mbox{ }\mbox{ }\mbox{ }\mbox{ }\mbox{ }\mbox{ }\mbox{ }\mbox{ } \mbox{ }\mbox{ }-2\bar{\nabla}_jg_{p\alpha}\bar{\nabla}_\beta g_{iq}-2\bar{\nabla}_ig_{p\alpha}\bar{\nabla}_\beta g_{jp})
\end{align*}
is equal to 0 if $i=j\neq 1$ and is equal to $\frac{1}{2}g^{kk}g^{kk}(\bar{\nabla}_1g_{kk}\bar{\nabla}_1g_{kk})$ when $i=j=1$.\\

Now we set $v(x,t)=g(x,t)-\bar{g}(x)$ and $w(x,t)=v(x,t)-(\delta(t)-\varepsilon)h(x) $, where $h$ is the eigenvector provided in theorem 3.5. We want to derive a evolution equation for $w$.\\

First, we consider the equation for $v$. Since $\bar{\nabla}\bar{g}\equiv0$, we have
\begin{align}
\frac{\partial}{\partial t} v_{ii}=g^{\alpha\alpha}\bar{\nabla}_\alpha\bar{\nabla}_\alpha v_{ii}+ A_{ii}
\end{align}
where $A_{ii}$ can be writen as
\[
A_{ii}= -2 g^{\alpha\alpha}g_{ii}\bar{g}^{ii}\bar{R}_{i\alpha i\alpha}
\]
if $i\neq 1$ and
\[
A_{11}=-2 g^{\alpha\alpha}g_{11}\bar{g}^{11}\bar{R}_{1\alpha 1\alpha}+\frac{1}{2}g^{kk}g^{kk}(\bar{\nabla}_1v_{kk}\bar{\nabla}_1v_{kk})
\]
if $i=1$.\\

For $(\delta(t)-\varepsilon)h$, we have
\begin{align*}
\frac{\partial}{\partial t}(\delta(t)-\varepsilon))h_{kk}&=-\lambda \delta(t)h_{kk}=(\delta(t)-\varepsilon)(\Delta_Lh)_{kk}-\lambda\varepsilon h_{kk}\\
&=\dot{g}^{\alpha\beta}\dot{\nabla}_\alpha\dot{\nabla}_\beta (\delta(t)-\varepsilon)h+2(\delta(t)-\varepsilon)\dot{R}^{i\mbox{ }i\mbox{ }}_{\mbox{ }k\mbox{ }k}h_{ii}-\lambda\varepsilon h_{kk}.
\end{align*}
Therefore, we get

\begin{align}
\frac{\partial}{\partial t}w_{kk}=g^{\alpha\alpha}\bar{\nabla}_\alpha\bar{\nabla}_\alpha w_{kk}&+(g^{\alpha\alpha}\bar{\nabla}_\alpha\bar{\nabla}_\alpha-\dot{g}^{\alpha\alpha}\dot{\nabla}_\alpha\dot{\nabla}_\alpha)(\delta(t)-\varepsilon) h_{kk}\\ 
&+(A_{kk}-2(\delta(t)-\varepsilon)\dot{R}^{i\mbox{ }i\mbox{ }}_{\mbox{ }k\mbox{ }k}h_{ii})+\lambda \varepsilon h_{kk}\nonumber.
\end{align}

\subsection{Vanishing of $|g-\dot{g}|$ at infinity} In this subsection, we prove that $|v|=|g-\bar{g}| \rightarrow 0$ as $r\rightarrow \infty$. We start with equation (5.1).
\[
\frac{\partial}{\partial t} v_{ii}=g^{\alpha\alpha}\bar{\nabla}_\alpha\bar{\nabla}_\alpha v_{ii}+ A_{ii}.\\
\]

Using the facts that $|Rm(\dot{g})|\leq Cr^{-3}$ and proposition 4.6, we have $|Rm(\bar{g})|\rightarrow 0$ as $r \rightarrow \infty$. Now, by Theorem 5.4, we have $|v|$, $|\bar{\nabla}v|$ are bounded. Therefore, $v$ is a solution of the following parabolic equation
\begin{align*}
P(v)=\frac{\partial}{\partial t}v_{ii}-g^{\alpha\alpha}\bar{\nabla}_\alpha\bar{\nabla}_\alpha v_{ii}&- \frac{1}{2}\delta_{i1}g^{kk}g^{kk}\bar{\nabla}_1v_{kk}\bar{\nabla}_1v_{kk}\\
&+2 g^{\alpha\alpha}g_{ii}\bar{g}^{ii}\bar{R}_{i\alpha i\alpha}=0
\end{align*}
We denote
\[
L'(v)=g^{\alpha\alpha}\bar{\nabla}_\alpha\bar{\nabla}_\alpha v_{ii}+ \frac{1}{2}\delta_{i1}g^{kk}g^{kk}\bar{\nabla}_1v_{kk}\bar{\nabla}_1v_{kk}
\]
and $2 g^{\alpha\alpha}g_{ii}\bar{g}^{ii}\bar{R}_{i\alpha i\alpha}=F$.\\

So we can compute directly and get the evolution equation for $|v|^2= \bar{g}^{ij}\bar{g}^{kl}v_{ik}v_{jl}$:
\begin{align*}
\frac{\partial}{\partial t}|v|^2&= 2\langle \frac{\partial}{\partial t}v,v\rangle =2\langle L'(v), v\rangle\\
&=2 \langle g^{\alpha\alpha}\bar{\nabla}_\alpha\bar{\nabla}_\alpha v,v\rangle + (\bar{g}^{11})^2 g^{kk}g^{kk}\bar{\nabla}_1v_{kk}\bar{\nabla}_1v_{kk}v_{11}+ \langle F,v\rangle
\end{align*}
We notice that
\[
2\langle g^{\alpha\alpha}\bar{\nabla}_\alpha\bar{\nabla}_\alpha v,v\rangle= g^{\alpha\alpha}\bar{\nabla}_\alpha\bar{\nabla}_\alpha |v|^2- 2g^{\alpha\alpha}\langle \bar{\nabla}_{\alpha}v,\bar{\nabla}_{\alpha}v\rangle.
\]
So by theorem 5.3, we can choose $T_0>0$ depending only on $|Rm(\dot{g})|$ such that 
\[
(\bar{g}^{11})^2 g^{kk}g^{kk}\bar{\nabla}_1v_{kk}\bar{\nabla}_1v_{kk}v_{11}-
2g^{\alpha\alpha}\langle \bar{\nabla}_{\alpha}v,\bar{\nabla}_{\alpha}v\rangle\leq 0
\]
for all $t\in [t_0,t_0+T_0]$. Therefore we have
\begin{align*}
\frac{\partial}{\partial t}|v|^2\leq L(|v|^2)+ \langle F, v\rangle
\end{align*}
for $L=g^{\alpha\alpha}\partial_\alpha\partial_\alpha$.\\

Because $v$ is radially symmetric (by remark 4.10),  $|v|$ depends only on $r$. We can change the variables by taking
\begin{align}
s=\begin{cases}&(1-r^{-1})^{\frac{1}{2}}\mbox{ for } r\leq 2\\
                              &r \mbox{ for } r\geq 3\\
                              &\mbox{a smooth increasing function for } r\in(2,3).
                              \end{cases}
\end{align}
 Then $L$ is uniformly parabolic parabolic by using this coordinate and $|v|^2$ is a function defined on $(s,t)\in (0,\infty)\times [t_0,t_0+T_0]$.\\

Since we want to the use comparison theorem, we should quote the following maximum principle first. The proof of this lemma follows the idea of theorem 6 in section 2.3 of [6].
\begin{lemma}
Suppose $u\in C^{2}(U\times[0,T])$ with an unbounded $U\subset \mathbb{R}^n$ is a solution of the following parabolic equation
\begin{align*}
\frac{\partial}{\partial t} u &\leq a^{ij}(x,t) \frac{\partial^2}{\partial x_i\partial x_j}u\mbox{ in }U\times[0,T];\\
&u(x,0)\geq 0 \mbox{ on } (\partial U\times [0,T])\cup (U\times \{0\})
\end{align*}
where $a^{ij}$ are smooth, bounded functions defined on $U\times [0,T]$ and $(a^{ij})$ is uniformly elliptic, say $\Lambda^{-1}|\xi|^2\leq a^{ij}\xi_i\xi_j\leq \Lambda |\xi|^2$ for all $\xi \in \mathbb{R}^n$. Moreover, if $u$ satisfies
\[
|u|\leq Ae^{a|x|^2}
\]
for some constants $A$, $a$. Then we have
\[
\max_{U\times[0,T]}u=\max_{(\partial U\times [0,T])\cup (U\times \{0\})}u
\]
\end{lemma}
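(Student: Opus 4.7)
The plan is to adapt the classical Widder–Tychonoff argument for the heat equation (Evans, \S 2.3, Thm 6) to the uniformly parabolic operator $Lu := \partial_t u - a^{ij}(x,t)\partial_i\partial_j u$. The growth assumption $|u|\le A e^{a|x|^2}$ will be defeated by subtracting a Gaussian supersolution of $L$, reducing to the elementary maximum principle on bounded cylinders.

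First I would construct the barrier. Fix a time horizon $T_1>0$ with $4\Lambda T_1 a<1$ and a small $\tau>0$, and set
\[
\phi(x,t)=(T_1+\tau-t)^{-n/2}\exp\!\left(\frac{|x|^2}{4\Lambda(T_1+\tau-t)}\right),\qquad 0\le t\le T_1.
\]
A direct computation, using only the ellipticity bound $a^{ij}\xi_i\xi_j\le\Lambda|\xi|^2$, gives $\partial_t\phi - a^{ij}\partial_i\partial_j\phi \ge 0$, i.e.\ $L\phi\ge0$; this is the only calculation in the proof and is a routine exercise modelled on the classical verification that the backward heat kernel is a supersolution.

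Next I would define, for each $\varepsilon>0$, the perturbed function $v_\varepsilon(x,t):=u(x,t)-\varepsilon\phi(x,t)$. Since $Lu\le0$ and $L\phi\ge0$, linearity yields $Lv_\varepsilon\le 0$ in $U\times[0,T_1]$. Now I restrict to a bounded truncation $(U\cap B_R)\times[0,T_1]$ and examine $v_\varepsilon$ on its parabolic boundary, which splits into three pieces: the initial slice $(U\cap B_R)\times\{0\}$, the lateral piece $(\partial U\cap B_R)\times[0,T_1]$, and the outer cap $(U\cap\partial B_R)\times[0,T_1]$. On the first two pieces $v_\varepsilon\le u$ is controlled by the parabolic-boundary values of $u$ on $U$. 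On the outer cap the growth hypothesis gives $|u|\le A e^{aR^2}$ while
\[
\varepsilon\phi\ge \varepsilon\,(T_1+\tau)^{-n/2}\exp\!\left(\frac{R^2}{4\Lambda(T_1+\tau)}\right),
\]
and since $1/(4\Lambda(T_1+\tau))>a$ (provided $\tau$ is small enough), $\varepsilon\phi$ dominates $|u|$ for all $R$ larger than some $R_0=R_0(\varepsilon,\tau,A,a)$; hence $v_\varepsilon\le 0$ on the outer cap. The classical bounded-domain maximum principle (standard for $Lv\le0$ on a bounded cylinder with $a^{ij}$ smooth and bounded) then gives
\[
\sup_{(U\cap B_R)\times[0,T_1]} v_\varepsilon \le \sup_{\partial_p U\times[0,T_1]} u
\]
for every $R\ge R_0$, where $\partial_p U:=(\partial U\times[0,T_1])\cup(U\times\{0\})$. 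Letting first $R\to\infty$ and then $\varepsilon\to0$ yields the conclusion on the subinterval $[0,T_1]$.

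Finally I would iterate. Cover $[0,T]$ by the finite chain $[0,T_1],[T_1,2T_1],\dots$; on each subsequent step, take the newly-obtained values at time $kT_1$ as initial data, noting that by the previous step these are themselves bounded by $\sup_{\partial_p U\times[0,T]} u$, so that the growth hypothesis $|u|\le A e^{a|x|^2}$ (which depends only on the ambient $u$, not on the restart) still applies with the same $a$ and $A$. After finitely many iterations we conclude $\max_{U\times[0,T]} u = \max_{\partial_p U\times[0,T]} u$. The main technical obstacle, and the only place one can slip, is step one: choosing the barrier so that $L\phi\ge0$ uniformly in $(x,t)$ depends crucially on matching the coefficient $1/(4\Lambda)$ in the exponent of $\phi$ to the upper ellipticity bound $\Lambda$, and the length $T_1$ of the subinterval must then be taken strictly below $1/(4\Lambda a)$ so that $\phi$ genuinely beats the $e^{a|x|^2}$ growth of $u$. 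Everything else is bookkeeping.
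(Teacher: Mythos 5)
Your proposal follows essentially the same route as the paper's own proof: both construct the backward Gaussian barrier $(T'+\tau-t)^{-n/2}\exp\bigl(|x|^2/(4\Lambda(T'+\tau-t))\bigr)$ (the paper writes the exponent as $\Lambda^{-1}|x|^2/(4(T+\varepsilon-t))$, which is the same thing), verify it is a supersolution using the upper ellipticity bound, subtract a small multiple of it from $u$, show negativity on the outer cap of a truncated cylinder for large radius, invoke the bounded-domain maximum principle, send the radius to infinity and the small parameter to zero, and finally iterate over subintervals of length less than $1/(4\Lambda a)$. The only differences are cosmetic (you use $\varepsilon$ and $\tau$ where the paper uses $\mu$ and $\varepsilon$).
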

\begin{proof}  First of all, we assume
\[
4aT<\Lambda^{-1}.
\]
So there exists $\varepsilon>0$ such that
\[
4(a+\varepsilon)(T+\varepsilon)<\lambda
\]

 The function
\begin{align*}
\varphi(x,t)=\mu (T+\varepsilon-t)^{-\frac{n}{2}}e^{\frac{ \Lambda^{-1}|x|^2}{4(T+\varepsilon-t)}}
\end{align*}
satisfies
\begin{align*}
&\frac{\partial}{\partial t}\varphi- a^{ij}(x,t) \frac{\partial^2}{\partial x_i\partial x_j}\varphi
\\
=&(\frac{n}{2}+\frac{\Lambda^{-1} |x|^2}{4(T-t)}-a^{ij}x_ix_j\frac{\Lambda^{-2}}{4(T+\varepsilon-t)}-\sum_{i}a^{ii}\frac{\Lambda^{-1}}{2})\mu (T+\varepsilon-t)^{-\frac{n}{2}+1}e^{-\frac{\Lambda^{-1}|x|^2}{4(T+\varepsilon-t)}}\\
\geq& 0
\end{align*}
for all $\mu\geq 0$.\\

Now, let $\nu=u-\varphi$. We have
\begin{align*}
\frac{\partial}{\partial t}\nu- a_{ij}(x,t) \frac{\partial^2}{\partial x_i\partial x_j}\nu
\leq 0.
\end{align*} 
for all $(x,t)\in U\times [0,T]$. We also have
\[
\max_{U\times \{0\}}\nu \leq \max_{U\times \{0\}}u.
\]

Finally, let $U_r=B_r(0)\cap U$. We have for all $x$, $|x|=r$
\begin{align*}
\nu(x,t)&\leq u(x,t)-\mu (T+\varepsilon-t)^{-\frac{n}{2}}e^{\frac{\Lambda^{-1} |x|^2}{4(T+\varepsilon)}}\\
&\leq Ae^{a|x^2|}-\mu \varepsilon^{-\frac{n}{2}}e^{(a+\varepsilon) |x|^2}\\
&\leq 0
\end{align*}
for $r$ large enough and all $t\in[0,T]$. Therefore by the maximum principle on bounded domain [11], we have
\[
\max_{U_r\times[0,T]}\nu=\max_{(\partial U_r\times [0,T])\cup (U_r\times \{0\})}u.
\]
for all $r$ large. This implies
\[
\max_{U\times[0,T]}\nu=\max_{(\partial U\times [0,T])\cup (U\times \{0\})}u.
\]
by taking $r\rightarrow \infty$ and $\mu\rightarrow 0$.\\

If it is not the case that $4aT<\Lambda^{-1}$, we can apply the result above on $[0,T_1]$, $[T_1,2T_1]$,... such that $T_1\leq \frac{\Lambda^{-1}}{8a}$.
\end{proof}

Now we can prove our proposition
\begin{pro} 
$|v|\rightarrow 0$ as $r\rightarrow \infty$ on $t\in [t_0,t_0+T_0]$ 
\end{pro}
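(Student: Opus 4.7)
The plan is to apply the maximum principle (Lemma 5.5) to $u=|v|^2$, which by the computations immediately preceding the proposition satisfies
\[
\partial_t u \leq Lu + \langle F, v\rangle,
\]
with $L=g^{\alpha\alpha}\partial_\alpha\partial_\alpha$ uniformly parabolic in the $s$-coordinate introduced above. I shall construct a supersolution $\varphi$ on the half-cylinder $\{r>R\}\times[t_0,t_0+T_0]$ that dominates $u$ on the parabolic boundary but approaches an $O(\epsilon)$ constant as $r\to\infty$, and then invoke Lemma 5.5 to conclude $u\leq\varphi$, whence the claim on letting $\epsilon\to 0$.

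First, I claim $|\langle F,v\rangle|\to 0$ as $r\to\infty$, uniformly in $t$. Indeed $|Rm(\dot g)|=O(r^{-3})$, and $|\nabla^{(l)}h|\to 0$ at infinity by Proposition 4.6, so Proposition 4.11 (or a direct expansion in $\varepsilon$) gives $|Rm(\bar g)|\to 0$; since $|v|$ is bounded by Theorem 5.3, $|\langle F,v\rangle|$ decays uniformly in $t\in[t_0,t_0+T_0]$. Moreover the initial data $v(\cdot,t_0)$ vanishes at infinity: trivially at the base step $t_0=0$ where $v\equiv 0$, and at subsequent iterations by the induction hypothesis inherited from the previous time interval. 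Therefore, for any $\epsilon>0$, I can fix $R$ large enough that both $|\langle F,v\rangle|\leq\epsilon$ and $|v(x,t_0)|^2\leq\epsilon$ whenever $r>R$ and $t\in[t_0,t_0+T_0]$.

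For the barrier, set $M=\sup_{M\times[t_0,t_0+T_0]}|v|^2$, which is finite by Theorem 5.3; let $s_R$ be the $s$-value at $r=R$; let $K$ be a uniform upper bound for the principal coefficient of $L$ in $s$-coordinates; and pick $\alpha>0$ arbitrarily. Define
\[
\varphi(s,t) = \epsilon\bigl(1+(t-t_0)\bigr) + M\exp\bigl(\alpha^2 K(t-t_0)-\alpha(s-s_R)\bigr).
\]
A straightforward calculation gives $\partial_t\varphi - L\varphi\geq \epsilon\geq|\langle F,v\rangle|$ for $r>R$; on $\{s=s_R\}$ one has $\varphi\geq M\geq u$; and at $t=t_0$ one has $\varphi\geq\epsilon\geq u(x,t_0)$ for $r>R$. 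Since $\varphi$ is bounded on the cylinder, the difference $u-\varphi$ trivially satisfies the growth hypothesis of Lemma 5.5 and is nonpositive on the parabolic boundary, so the lemma yields $u\leq\varphi$ throughout. Sending $s\to\infty$ gives $\limsup_{r\to\infty}|v|^2\leq\epsilon(1+T_0)$, and since $\epsilon$ was arbitrary, $|v|\to 0$ as $r\to\infty$.

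The main technical obstacle is the barrier construction: $\varphi$ must simultaneously be a supersolution of the forced parabolic inequality, dominate the only coarsely controlled $|v|^2$ on the inner boundary $\{r=R\}$, and decay to an arbitrarily small constant at infinity. The exponential factor $\exp(\alpha^2 K(t-t_0)-\alpha(s-s_R))$ accomplishes the first two requirements, while the linear-in-$t$ term $\epsilon(t-t_0)$ absorbs the source $\langle F,v\rangle$; Lemma 5.5 then converts pointwise supremum control on the parabolic boundary into control at infinity.
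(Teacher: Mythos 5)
Your proof is correct and takes a genuinely different route from the paper's. Where the paper builds the fundamental solution $\Phi$ of $L$ by the parametrix method, writes $W=\Phi\ast|\langle F,v\rangle|$, and invokes the comparison lemma to bound $|v|^2\leq W$, you bypass the fundamental solution entirely by constructing an explicit supersolution barrier $\varphi=\epsilon(1+(t-t_0))+M\exp(\alpha^2K(t-t_0)-\alpha(s-s_R))$ on the outer half-cylinder. The barrier computation is right, the growth hypothesis of the maximum-principle lemma is trivially met because both $|v|^2$ and $\varphi$ are bounded, and sending $s\to\infty$ and then $\epsilon\to0$ gives the desired $|v|\to0$. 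Your approach is more elementary and arguably cleaner for the statement as written.

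The trade-off, however, is worth flagging. The paper's Duhamel argument delivers not just qualitative decay but the \emph{quantitative} bound $|v|^2\leq Cs^{-3}$ from the Gaussian upper bound on $\Phi$ together with $|\langle F,v\rangle|\leq Cs^{-3}$. This quantitative rate is exactly what gets bootstrapped in Remark 5.8 (iterating $|v|\leq Cs^{-3/2}\Rightarrow|\langle F,v\rangle|\leq Cs^{-6}\Rightarrow|v|\leq Cs^{-3}\Rightarrow\cdots$) to yield super-polynomial decay $|v|\leq C_l s^{-l}$ for every $l$, which in turn feeds Lemma 5.9 and the appendix. Your barrier, built from the qualitative $|\langle F,v\rangle|\leq\epsilon$, only produces $\limsup_{s\to\infty}|v|^2\leq\epsilon(1+T_0)$ and cannot be bootstrapped in the same way without a substantial redesign (e.g.\ a family of polynomial barriers, each of which would need careful absorption of the forcing term). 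So your argument proves the proposition but does not substitute for the paper's proof in the larger program.

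Two small items to tidy: you cite ``Lemma 5.5'' for the maximum principle and ``Theorem 5.3'' for the bound on $|v|$; in the paper's numbering these are Lemma 5.6 and Theorem 5.4 respectively. Also, the discussion of ``subsequent iterations by the induction hypothesis'' for the initial data is a non-issue here: by construction $g(\cdot,t_0)=\bar g$, so $v(\cdot,t_0)\equiv0$ on the nose, independently of any induction.
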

\begin{proof}
We change our coordinate by taking
\begin{align*}
s=\begin{cases}&(1-r^{-1})^{\frac{1}{2}}\mbox{ for } r\leq 2\\
                              &r \mbox{ for } r\geq 3\\
                              &\mbox{a smooth increasing function for } r\in(2,3).
                              \end{cases}
\end{align*}
So we can write
\[
L(u)=g^{11}(x,t)\partial_s\partial_su.
\]

Using the parametric method [11], we can find the fundamental solution  of $L$, say $\Phi(s,\xi, t, \tau)$. Define
\[
W(s,t)=\Phi \ast |\langle F, v\rangle|=\int_{t_0}^{t_0+t}\int_0^\infty \Phi(s,\xi,t, \tau)|\langle F, v\rangle (\xi, \tau)| .
\]
So we have $W\geq0$ on $(\partial U\times [t_0,t_0+T_0])\cup (U\times \{t_0\})$ and
\[
\frac{\partial}{\partial t}W= L(W)+|\langle F, v\rangle|.
\]

Since $|v|^2=0$ on $(\partial U\times [t_0,t_0+T])\cup (U\times \{t_0\})$ and 
\[
\frac{\partial}{\partial t}|v|^2= L(|v|^2)+\langle F, v\rangle,
\]
we have
\[
\frac{\partial}{\partial t}(W-|v|^2)- L(W-|v|^2)\geq 0
\]
and $(W-|v|^2)\geq 0$.  Therefore by lemma 5.6, we have $|v|^2\leq W$.\\

Because $|\langle F,v\rangle| $ is bounded and $|\langle F,v\rangle| \leq C s^{-3}$ as $s\rightarrow \infty$ (recall that  $|Rm(\dot{g})|\leq Cr^{-3}$), by the definition of $W$ and the estimates of fundamental solutions [11]
\[
|\Phi(s,\xi,t,\tau)|\leq C (t-\tau)^{-\frac{1}{2}}e^{\frac{-c|s-\xi|^2}{t-\tau}},
\]
we can get $W\leq C s^{-3} $. So $|v|^2\leq Cs^{-3}$ which implies $|v|\rightarrow 0$ as $r\rightarrow \infty$ on $t\in [t_0,t_0+T_0]$.
\end{proof}

\begin{remark}
We notice that
\begin{align*}
F&=2g^{\alpha\alpha}g_{ii}\bar{g}^{ii}\bar{R}_{i\alpha i\alpha}=2(g^{\alpha\alpha}-\bar{g}^{\alpha\alpha})g_{ii}\bar{g}^{ii}\bar{R}_{i\alpha i\alpha}+2g_{ii}\bar{g}^{ii}\bar{R}_{ii}\\
&=2g^{\alpha\alpha}\bar{g}^{\alpha\alpha}v_{\alpha\alpha}g_{ii}\bar{g}^{ii}\bar{R}_{i\alpha i\alpha}+2g_{ii}\bar{g}^{ii}\bar{R}_{ii}.
\end{align*}

Now we fix a $t\in [0,T_0]$. By proposition 4.6, for any $l\in \mathbb{N}$, there exists a constant $C_l>0$ such that $|\bar{R}_{ii}|\leq C_l s^{-l}$. Since we have $|v|\leq Cs^{-\frac{3}{2}}$, so we have $|\langle F, v\rangle|\leq C s^{-6}$. This implies $ W\leq C s^{-6}$, which means $|v|\leq C s^{-3}$. Therefore, we can prove inductively that for any $l\in \mathbb{N}$, there exists a constant $C_l>0$ such that $|v|\leq C_l s^{-l}$. Now we use $p$-coordinate, we will have $|v|(p)\leq C_l (1-p)^{l}$ for all $l\in\mathbb{N}$.
\end{remark}
 In fact, we can prove the following lemma:
\begin{lemma}
Let $v$ be a solution of equation (5.1) and $T_0$ be the constant given by proposition 5.7. Then  for any $t\in [t_0,t_0+T_0]$ fixed, we have
\begin{align}
(|\bar{\nabla}^{(l)}v|\sqrt{\bar{g}})(p,t)\rightarrow 0
\end{align}
as $p\rightarrow 1$ for all $l\in \mathbb{N}$.
\end{lemma}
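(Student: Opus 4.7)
The plan is to proceed by induction on $l$, generalizing Proposition 5.7 and Remark 5.8 from $v$ to each higher covariant derivative $\bar\nabla^{(l)} v$. The base case $l=0$ is immediate from Remark 5.8: we already have $|v|(p,t)\le C_k(1-p)^k$ for every $k$, and the $p$-coordinate expression of $g_0$ gives $\sqrt{\bar g}\sim C(1-p)^{-3}$ as $p\to 1$, so $|v|\sqrt{\bar g}\to 0$ with arbitrary polynomial rate. The real task is therefore to establish the analogous arbitrary polynomial decay $|\bar\nabla^{(l)} v|(p,t)\le C_{l,k}(1-p)^k$ at each order $l\ge 1$.

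Assuming the conclusion for orders $m\le l-1$, I would differentiate equation (5.1) $l$ times, commuting $\bar\nabla^{(l)}$ with the principal operator $g^{\alpha\alpha}\bar\nabla_\alpha\bar\nabla_\alpha$, to obtain a parabolic equation of the schematic form
\[
\frac{\partial}{\partial t}\bar\nabla^{(l)} v = g^{\alpha\alpha}\bar\nabla_\alpha\bar\nabla_\alpha(\bar\nabla^{(l)} v)+F_l,
\]
where $F_l$ is polynomial in $\bar\nabla^{(m)} v$ ($m\le l$), in $\bar\nabla^{(m)} g$ (which are $L^\infty$-bounded by Shi's derivative estimates), and in $\bar\nabla^{(m)}\bar R$ ($m\le l$). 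The key bookkeeping point is that any top-order factor $\bar\nabla^{(l)} v$ in $F_l$ comes multiplied either by a curvature tensor of $\bar g$ (from the commutator of $\bar\nabla^{(l)}$ with $\bar\nabla_\alpha\bar\nabla_\alpha$) or by a single factor $\bar\nabla v$ (from differentiating the quasi-linear gradient term in $A_{11}$). Forming the evolution of $|\bar\nabla^{(l)} v|^2$ and using the identity $2\langle g^{\alpha\alpha}\bar\nabla_\alpha\bar\nabla_\alpha w,w\rangle = g^{\alpha\alpha}\bar\nabla_\alpha\bar\nabla_\alpha|w|^2-2g^{\alpha\alpha}|\bar\nabla_\alpha w|^2$ exactly as in Proposition 5.7, the gradient drift is absorbed into the negative $-2g^{\alpha\alpha}|\bar\nabla_\alpha w|^2$ after possibly shrinking $T_0$ (using the Shi bound on $|\bar\nabla v|$), yielding a uniformly parabolic inequality
\[
\frac{\partial}{\partial t}|\bar\nabla^{(l)} v|^2 \le L|\bar\nabla^{(l)} v|^2+2\langle F_l,\bar\nabla^{(l)} v\rangle.
\]

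By the inductive hypothesis every sub-top-order $\bar\nabla^{(m)} v$ has arbitrary polynomial decay in $(1-p)$, and by Proposition 4.6 so does every $\bar\nabla^{(m)}\bar R$; the remaining top-order contributions to $F_l$ carry small curvature or gradient-of-$v$ coefficients. Hence $|\langle F_l,\bar\nabla^{(l)} v\rangle|$ is bounded by $C_k\,s^{-k}$ for every $k$ in the coordinate of (5.2). The initial data $\bar\nabla^{(l)} v(\cdot,t_0)=0$ vanishes (since $v(\cdot,t_0)=0$), and the inner boundary behaviour as $s\to 0$ follows from Remark 4.10 together with a standard Shi-type argument at the axis. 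Convolving with the fundamental solution of $L$, applying the comparison lemma 5.6, and bootstrapping exactly as in Remark 5.8 then gives $|\bar\nabla^{(l)} v|(p,t)\le C_{l,k}(1-p)^k$ for every $k$. Combined with $\sqrt{\bar g}\sim(1-p)^{-3}$ this closes the induction.

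The main obstacle is the careful organisation of $F_l$: one must check that every top-order factor $\bar\nabla^{(l)} v$ appears with a coefficient that is either a decaying curvature tensor or a factor that is small over short time, so it can be absorbed into the good dissipative term or into the bootstrap. The commutator contributions are straightforward, but the contribution from differentiating the quasi-linear gradient term $(\bar\nabla_1 v_{kk})^2$ is the delicate one and relies on precisely the same short-time absorption trick used in Proposition 5.7; beyond this, the argument is essentially a bookkeeping induction that mirrors the $l=0$ case step by step.
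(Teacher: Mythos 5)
Your strategy is genuinely different from the paper's, and I'll compare the two — but there is also a real gap in your treatment of $F_l$ that needs to be flagged.

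The paper does not re-run the parabolic comparison of Proposition 5.7 at each derivative level. Its proof (in the appendix, Section 7.2) is an almost entirely \emph{calculus} argument, using only two ingredients already in hand: (i) Shi's higher derivative estimates for the de Turck flow (Proposition 7.2) give a \emph{uniform bound} on $|\bar\nabla^{(l)}v|$ for every $l$ at each fixed $t\in(t_0,t_0+T_0]$, and (ii) Proposition 5.7 / Remark 5.8 give the arbitrary polynomial decay $|v|(p)\le C_k(1-p)^k$ for every $k$. The paper then combines these via the Leibniz-type identity
\[
2\binom{2k}{k}\,|\bar\nabla^{k+1}v|^2=\bar\nabla^{2(k+1)}|v|^2-2\sum_{l\le k-1}\binom{2k}{l}\langle\bar\nabla^{2k-l}v,\bar\nabla^{l}v\rangle,
\]
reducing the claim to showing $(\partial_p)^m|v|^2(p)\to 0$ as $p\to1$. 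This is established by a finite-difference quotient plus mean value theorem argument: the $m$-th divided difference of $|v|^2$ at $p=1$ tends to $0$ because $|v|^2$ decays faster than any polynomial, so there is a sequence $\xi_n\to1$ with $(\partial_p)^m|v|^2(\xi_n)\to0$; the uniform bound on $(\partial_p)^{m+1}|v|^2$ then fills in the values between the $\xi_n$ and $1$. In other words, the paper isolates and exploits the elementary fact that a smooth function on $(0,1]$ with bounded derivatives of all orders and arbitrary polynomial decay at $p=1$ extends smoothly by zero and is flat there. This completely sidesteps the PDE machinery at each derivative level, so no further commutators, boundary conditions, or absorption arguments are needed beyond what is already in Propositions 5.7 and 7.2. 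Your route, by contrast, aims to re-establish the decay directly from a parabolic equation for each $\bar\nabla^{(l)}v$; it is a heavier approach, and if it worked it would be more self-contained in the sense of not relying on the interpolation trick, but the paper's approach is both shorter and buys more (it yields the quantitative flatness $|\bar\nabla^{(k)}v|\sqrt{\bar g}\le C'_k(1-p)^k$ essentially for free once $|v|^2$ is extended smoothly by zero).

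The concrete gap in your proposal is the claim that ``$|\langle F_l,\bar\nabla^{(l)}v\rangle|$ is bounded by $C_k\,s^{-k}$ for every $k$.'' As you set it up, $F_l$ contains the commutator $[\bar\nabla^{(l)},g^{\alpha\alpha}\bar\nabla_\alpha\bar\nabla_\alpha]v$, which produces terms of the schematic form $\bar\nabla^{(m)}v\cdot\bar\nabla^{(l-m+2)}v$ for $1\le m\le l$. The piece $m=l$ gives $\bar\nabla^{(l)}v\cdot\bar\nabla^{(2)}v$, so the pairing $\langle F_l,\bar\nabla^{(l)}v\rangle$ contains $\bar\nabla^{(2)}v\,|\bar\nabla^{(l)}v|^2$. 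The coefficient $\bar\nabla^{(2)}v$ is only \emph{bounded} (by Shi), not decaying, and $|\bar\nabla^{(l)}v|^2$ is exactly the quantity whose decay you are trying to prove; so this term is a reaction term, not a forcing term, and cannot be swept into the $C_k\,s^{-k}$ bound. To salvage the strategy you would need to reorganise the inequality into the form $\partial_t|\bar\nabla^{(l)}v|^2\le L\,|\bar\nabla^{(l)}v|^2+c(x,t)\,|\bar\nabla^{(l)}v|^2+G_l$ with $c$ merely bounded and $G_l$ the decaying part, and then apply the comparison to $e^{-Ct}|\bar\nabla^{(l)}v|^2$ or build the fundamental solution of the operator $L+c$ rather than $L$. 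In addition, the behaviour at the inner boundary $p\to 0$ for the higher derivatives $\bar\nabla^{(l)}v$ cannot simply be cited from Remark 4.10 (which only addresses $|g-\bar g|\to 0$); one would need an analogue of Proposition 4.2/4.5 to control $|\bar\nabla^{(l)}v|(p)$ near $p=0$ in the parabolic setting. Both issues are avoidable, but the paper's interpolation argument avoids them altogether.
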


For the proof of this lemma, see appendix.

\subsection{Energy estimate and improved regularity:} We want to achieve two goals in this subsection. First, we shall prove that the Ricci-de Turck flow will stay in a estimate cone. Second, we want to show that the deviation of the Ricci-de Turck flow from initial data $\bar{g}$ will satisfy (\ref{cod1}). All these facts rely on the energy estimate of parabolic PDEs.\\

We use $|\cdot|_0$, $|\cdot|_\varepsilon$,  and $|\cdot|$ to denote the norms of tenser w.r.t $\dot{g}$, $\bar{g}$ and $g$. Also we define $\|w\|^{(0)}=(\int_M |w|_0^2 dVol^{(0)})^{\frac{1}{2}}$ and $\|w\|^{(\varepsilon)}_2=(\int_M |w|_{\varepsilon}^2 dVol^{(\varepsilon)})^{\frac{1}{2}}$.\\

Moreover, we introduce several norms for matrix valued functions on the $M\times [t_0, t_0+T]$.\\

\begin{definition}
Suppose that $(x,t)\mapsto k=k(x,t)$ defines a section over $M\times [t_0,t_0+T]$ of the bundle $Sym^2(T^*M)$. We define
\[
\|k\|^{(0)}_{L^2([t_0,t_0+T];L^2(M))}=(\int_{t_0}^{t_0+T}\int_M |k|^2_{0}(t) dt)^{\frac{1}{2}};
\] 
\[
\|k\|^{(0)}_{L^{\infty}([t_0,t_0+T];L^2(M))}=\sup_{s\in[t_0,t_0+T]}(\int_M |k|^2_{0}(s) )^{\frac{1}{2}}
\]
and
\[
\|k\|^{(0)}_{H^j([t_0,t_0+T];H^k(M))}=\sum_{p\leq j; q\leq k}(\int_{t_0}^{t_0+T}\int_M |(\partial_t)^p\dot{\nabla}^{(q)} k|^2_0(t) dt)^{\frac{1}{2}}.
\]
We can define the corresponding $\varepsilon$-version of these norms by replacing $\dot{g}$-norm by $\bar{g}$-norm and $\dot{\nabla}$ by $\bar{\nabla}$.
\end{definition}

Finally, we define the following notations.

\begin{definition}
Suppose that $(x,t)\mapsto A=A(x,t)$ defines a section over $M\times [s,s']$ of the bundle $Sym^2(T^*M)$ for some closed interval $[s,s']\in (-\infty,0]$ and $n\in \mathbb{N}$. We denote $A=O(\delta^n)$ iff $|A|(t)\leq C \delta^n(t)$ for some universal constant $C$ and all $t\in[s,s']$.
\end{definition}

Now we can prove the following theorem.
\begin{theorem}
There are universal constants $\bar{T}>0$, $C_0>0$ and $\iota>0$ such that if we fix  $\varepsilon\leq \iota$ and $T\leq \bar{T}$, then
\[
\|w\|^{(0)}_{2}(t), \|\bar{\nabla}w\|^{(0)}_{2}(t)\leq C_0T^{\frac{1}{2}} (\varepsilon+\varepsilon\delta(t)+\delta(t)^2+\varepsilon^2)
\]
for all $t\in[t_0,t_0+T]$.
\end{theorem}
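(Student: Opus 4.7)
The plan is to treat equation (5.2) as a parabolic equation for $w$ with small, explicitly controlled forcing, and derive the two bounds via energy estimates at the $L^2$ and $H^1$ levels combined with Gronwall's inequality. A key preliminary observation is that, once the time shift is calibrated so that $\delta(t_0) = \varepsilon$ (i.e.\ the initial value $g(x,t_0) = \bar g$ forces $v(t_0) = 0$ and hence $w(t_0) = 0$), Gronwall need only propagate the forcing; over $[t_0, t_0+T]$ with $T \leq \bar T$ small, the ratio $\delta(t)/\varepsilon$ stays bounded by a universal constant, so in particular $|\delta(t) - \varepsilon|$ is absorbed by $\varepsilon + \varepsilon\delta(t) + \delta(t)^2 + \varepsilon^2$.

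For the $L^2$ estimate, I take the $\dot g$-inner product of both sides of (5.2) with $w$ and integrate over $M$. The principal term $\int \langle g^{\alpha\alpha}\bar\nabla_\alpha\bar\nabla_\alpha w, w\rangle$, after integration by parts (the boundary contributions vanish by the decay in Lemma 5.9), yields essentially $-\int g^{\alpha\alpha}|\bar\nabla w|^2$; combined with Shi's $C^1$-control from Theorem 5.4 and the smallness of $|v|$ for short time, this is comparable to $-c\,\|\bar\nabla w\|^{(0)\,2}_2$. The three source terms in (5.2) are then bounded in $L^2$ as follows. The term $F_1 := (g^{\alpha\alpha}\bar\nabla^2 - \dot g^{\alpha\alpha}\dot\nabla^2)(\delta-\varepsilon)h$ vanishes when $g = \dot g$ and so is controlled by $|\delta-\varepsilon|(|v|+\varepsilon)$ times derivatives of $h$ (bounded by Corollary 4.4), producing pieces of size $\varepsilon|\delta-\varepsilon|$, $(\delta-\varepsilon)^2$, and $|\delta-\varepsilon|\|w\|_2$. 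The term $F_2 := A - 2(\delta-\varepsilon)\dot R^{i\,i}_{\,k\,k}h_{ii}$ equals, at $v = 0$, $-2\,\mathrm{Ric}(\bar g) - 2(\delta-\varepsilon)\dot R h$; Taylor-expanding $\mathrm{Ric}(g_0 + \varepsilon h)$ around $g_0$, using $\mathrm{Ric}(g_0) = 0$ and the eigenvector identity $\Delta_L h = -\lambda h$, extracts the $O(\varepsilon)$ principal piece, which combines with $F_3 = \lambda\varepsilon h$ to leave only the gauge-like residue $\varepsilon(\nabla^2 H + 2\nabla(\zeta h)) + O(\varepsilon^2)$; the $v$-dependent remainder of $A$ contributes $O(|v||\delta-\varepsilon|) + O(|v|^2) + O(|\bar\nabla v|^2)$, which after substituting $v = w + (\delta-\varepsilon)h$ produces quantities of the claimed type plus terms linear in $w$ absorbable by Gronwall. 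Cauchy--Schwarz with a small absorption constant then yields
\[
\frac{d}{dt}\|w\|^{(0)\,2}_2 + c\,\|\bar\nabla w\|^{(0)\,2}_2 \leq C\|w\|^{(0)\,2}_2 + C\bigl(\varepsilon + \varepsilon\delta(t) + \delta(t)^2 + \varepsilon^2\bigr)^2,
\]
and Gronwall from $w(t_0)=0$ over $[t_0,t_0+T]$ gives the first half of the estimate.

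For the gradient bound, I differentiate (5.2) once by $\bar\nabla$ and pair with $\bar\nabla w$. The commutator $[\bar\nabla, g^{\alpha\alpha}\bar\nabla^2]$ contributes curvature times $\bar\nabla w$, and $\bar\nabla$ applied to the source terms enjoys the same structural $L^2$ bound thanks to the $C^2$ control on $h$ (Corollary 4.4 together with the $l=2$ cases of Propositions 4.5--4.6). This leads to the analogous differential inequality
\[
\frac{d}{dt}\|\bar\nabla w\|^{(0)\,2}_2 + c\,\|\bar\nabla\bar\nabla w\|^{(0)\,2}_2 \leq C\|\bar\nabla w\|^{(0)\,2}_2 + C\|w\|^{(0)\,2}_2 + C\bigl(\varepsilon + \varepsilon\delta(t) + \delta(t)^2 + \varepsilon^2\bigr)^2,
\]
and a second Gronwall, starting from $\bar\nabla w(t_0)=0$ and using the $\|w\|_2$ bound just obtained, closes the argument. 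The main obstacle throughout is the careful bookkeeping for $F_2$: a crude estimate would see $A$ as $O(1)$, so one must exploit the eigenvector relation $\Delta_L h = -\lambda h$ and the precise form of $A$ in (5.1) to cancel the leading-order pieces and arrive at the claimed size $\varepsilon + \varepsilon\delta + \delta^2 + \varepsilon^2$.
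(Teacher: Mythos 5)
Your proposal is correct and the key structural moves match the paper's: the same decomposition of the forcing into a ``difference of Laplacians'' piece (your $F_1$, the paper's $B_{kk}$), a ``curvature coupling'' piece (your $F_2$, the paper's $C_{kk}$), and $\lambda\varepsilon h$; the same cancellations coming from $\mathrm{Ric}(g_0)=0$ and $\Delta_L h=-\lambda h$ that knock $F_2+F_3$ down to the size $\varepsilon + \varepsilon\delta + \delta^2 + \varepsilon^2$; and the same crucial observation that $w(t_0)=0$ so there is no data contribution. The one genuine difference is in how the energy estimate is packaged. The paper, having shown $w$ solves a linear parabolic system $\partial_t w = Lw + F$ with $\|F\|_2^{(\varepsilon)}(t)\lesssim \varepsilon + \varepsilon\delta + \delta^2 + \varepsilon^2$, invokes the standard linear-parabolic energy estimate (the Evans-style $\sup_t\|w\|_{H^1}(t) + \|\partial_t w\|_{L^2L^2}\lesssim \|F\|_{L^2L^2} + \|w\|_{H^1}(t_0)$) once to obtain $\|w\|_2$ and $\|\bar\nabla w\|_2$ simultaneously, with the decay of Lemma 5.9 justifying the integrations by parts on the noncompact $M$. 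You instead do the estimate by hand: pair (5.2) with $w$ and run Gronwall for the $L^2$ bound, then differentiate, pair with $\bar\nabla w$, and run a second Gronwall for the gradient bound. The two routes give the same $T^{1/2}$ factor (yours from $\int_{t_0}^{t_0+T}(\cdots)^2\,ds \leq T\sup(\cdots)^2$, the paper's from $\|F\|_{L^2L^2}\leq T^{1/2}\sup_t\|F\|_2$). Your version is more self-contained and elementary, at the mild cost of needing a little extra care in the gradient step --- after differentiating the equation, the forcing term $\bar\nabla F$ should either be controlled in $L^2$ directly (which your $C^2$ control on $h$ supplies) or thrown back onto $\bar\nabla\bar\nabla w$ by another integration by parts and absorbed --- whereas the paper's packaged estimate extracts the $H^1$ control of $w$ from only $L^2$-in-time control of $F$, exploiting the one-derivative parabolic gain automatically. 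Both are valid; the paper's is shorter once the cited energy estimate is accepted, yours is more transparent about where each term goes.
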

\begin{proof} First, by remark 4.12, we can choose $\iota$ small such that $|Rm(\bar{g})|_\varepsilon\leq 2|Rm(g_0)|$ for all $\varepsilon\leq \iota$. Now take $\rho=\min\{\frac{1}{4},\frac{1}{8C_1}\}$ and $k_0=3|Rm(g_0)|$ in theorem 5.4, we will have $T_1(\rho,k_0)$ such that
\[
|g(x,t)-\bar{g}|_\varepsilon\leq \rho
\]
for all $t\in [t_0,t_0+T_1]$. Here we do some estimates of $|g-\bar{g}|_\varepsilon$ and $|g-\dot{g}|_0$, which can help us understand the different between norms $|\cdot|$, $|\cdot|_0$ and $|\cdot|_\varepsilon$. We first notice that our solution $g$ is defined on a small interval $[t_0,t_0+T_1]$ such that $|g-\bar{g}|_\varepsilon \leq \frac{1}{4}$. Secondly, recall that we have estimate (4.4) and $\rho \leq \frac{1}{8C_1}$. Combining all these estimates, we have $|\dot{g}-g|_0\leq \frac{1}{2}$. Therefore, all these norms are equivalent.\\

By theorem 5.5, there exists $T_2(k_0)$ such that $\sup_{M\times [t_0,t_0+T_2(k_0)]} |\bar{\nabla }g|\leq C(k_0)$ and $\sup_{M\times [t_0,t_0+T_2(k_0)]} |\partial_tg|\leq C(k_0)$.
We can also find a $T_3(k_0)$ which is given by theorem 4.9. which is the length of the time interval of the existence.\\

Finally, we take $T_0$ as the one in proposition 5.7. We set $\bar{T}= \min\{T_0,T_1,T_2, T_3\}$. Now we fix a $T\leq \bar{T}$.\\

Equation (5.2) tells us that
\begin{align*}
\frac{\partial}{\partial t}w_{kk}=g^{\alpha\alpha}\bar{\nabla}_{\alpha}\bar{\nabla}_{\alpha} w_{kk}&+(g^{\alpha\alpha}\bar{\nabla}_{\alpha}\bar{\nabla}_{\alpha}-\dot{g}^{11}\dot{\nabla}_1\dot{\nabla}_1)(\delta(t)-\varepsilon) h_{kk}\\
&+ (A_{kk}-2(\delta(t)-\varepsilon)\dot{R}^{i\mbox{ }i\mbox{ }}_{\mbox{ }k\mbox{ }k}h_{ii})-\lambda \varepsilon h_{kk}
\end{align*}
for $k=0,1,2,3$. If we set $B_{kk}= (g^{\alpha\alpha}\bar{\nabla}_\alpha\bar{\nabla}_\alpha-\dot{g}^{\alpha\alpha}\dot{\nabla}_\alpha\dot{\nabla}_\alpha)\delta h$ and $C_{kk}=(A_{kk}-2(\delta(t)-\varepsilon)\dot{R}^{i\mbox{ }i\mbox{ }}_{\mbox{ }k\mbox{ }k}h_{ii})$, we can write this equation as
\begin{align}\label{peq}
\frac{\partial}{\partial t}w_{kk}=g^{\alpha\alpha}\bar{\nabla}_{\alpha}\bar{\nabla}_{\alpha} w_{kk} +B_{kk}+C_{kk}-\lambda \varepsilon h_{kk}.
\end{align}
We will estimate each term of equation (\ref{peq}) in the following paragraphs.\\

Here we start with proving $\|B_{kk}\|^{(\varepsilon)}_{2}\leq C(\varepsilon\delta(t)+\varepsilon^2)$. Since we have
\begin{align*}
g^{\alpha\alpha}\bar{\nabla}_{\alpha}\bar{\nabla}_{\alpha}(\delta(t)-\varepsilon) h_{kk}=g^{\alpha\alpha}\partial_{\alpha}(\bar{\nabla}_{\alpha}(\delta(t)-\varepsilon) h_{kk})
&-g^{\alpha\alpha}\bar{\Gamma}^{\gamma}_{\alpha \alpha}(\bar{\nabla}_{\gamma} (\delta(t)-\varepsilon) h_{kk})\\
&-2g^{\alpha\alpha}\bar{\Gamma}^k_{\alpha k}(\bar{\nabla}_1 (\delta(t)-\varepsilon) h_{kk})
\end{align*}
and
\begin{align*}
\dot{g}^{\alpha\alpha}\dot{\nabla}_{\alpha}\dot{\nabla}_{\alpha}((\delta(t)-\varepsilon) h)_{kk}=\dot{g}^{\alpha\alpha}\partial_{\alpha}(\dot{\nabla}_{\alpha}(\delta(t)-\varepsilon) h_{kk})
&-\dot{g}^{\alpha\alpha}\dot{\Gamma}^{\gamma}_{\alpha\alpha}(\dot{\nabla}_{\gamma} (\delta(t)-\varepsilon) h_{kk})\\
&-2\dot{g}^{\alpha\alpha}\dot{\Gamma}^k_{\alpha k}(\dot{\nabla}_{\alpha} (\delta(t)-\varepsilon) h_{kk}),
\end{align*}
the $B_{kk}$ can be expressed as
\begin{align*}
(g^{\alpha\alpha}\bar{\nabla}_{\alpha}\bar{\nabla}_{\alpha}-\dot{g}^{\alpha\alpha}\dot{\nabla}_{\alpha}\dot{\nabla}_{\alpha})(\delta(t)-\varepsilon) h(x)_{kk}&\\
=(\dot{g}^{\alpha\alpha}-g^{\alpha\alpha})\partial_{\alpha}(\dot{\nabla}_{\alpha}((\delta(t)-\varepsilon) h)_{kk})-g^{\alpha\alpha}&\partial_{\alpha}(\bar{\nabla}_{\alpha}-\dot{\nabla}_{\alpha})((\delta(t)-\varepsilon) h)_{kk}\\
-(\dot{g}^{\alpha\alpha}-g^{\alpha\alpha})\dot{\Gamma}^{\gamma}_{\alpha\alpha}(\dot{\nabla}_{\gamma}((\delta(t)-\varepsilon) h)_{kk})-&g^{\alpha\alpha}(\bar{\Gamma}_{\alpha\alpha}^{\gamma}-\dot{\Gamma}_{\alpha\alpha}^{\gamma})(\dot{\nabla}_{\gamma} (\delta(t)-\varepsilon) h)_{kk}\\
-2(\dot{g}^{\alpha\alpha}-g^{\alpha\alpha})\dot{\Gamma}^k_{\alpha k}(\dot{\nabla}_{\alpha}((\delta(t)-\varepsilon) h)_{kk})&-2g^{\alpha\alpha}(\bar{\Gamma}_{\alpha k}^k-\dot{\Gamma}_{\alpha k}^k)(\dot{\nabla}_{\alpha} (\delta(t)-\varepsilon) h)_{kk}\\
&+g^{\alpha\alpha}\bar{\Gamma}_{\alpha\alpha}^{\gamma}(\bar{\nabla}_{\gamma}\dot{\nabla}_{\gamma})(\delta(t)-\varepsilon) h_{kk}\\
&+2g^{\alpha\alpha}\bar{\Gamma}_{\alpha k}^k(\bar{\nabla}_{\alpha}\dot{\nabla}_1)(\delta(t)-\varepsilon) h_{kk}
\end{align*}
\begin{align*}
=\mbox{ }&(\dot{g}^{\alpha\alpha}-g^{\alpha\alpha})\dot{\nabla}_1\dot{\nabla}_1((\delta(t)-\varepsilon) h)_{kk}+g^{\alpha\alpha}\bar{\nabla}_{\alpha}(\dot{\nabla}_{\alpha}-\bar{\nabla}_{\alpha})((\delta(t)-\varepsilon) h)_{kk}\\
&-g^{\alpha\alpha}(\bar{\Gamma}_{\alpha\alpha}^{\gamma}-\dot{\Gamma}_{\alpha\alpha}^{\gamma})(\bar{\nabla}_{\gamma} (\delta(t)-\varepsilon) h)_{kk}-2g^{\alpha\alpha}(\bar{\Gamma}_{\alpha k}^k-\dot{\Gamma}_{\alpha k}^k)(\bar{\nabla}_{\alpha} (\delta(t)-\varepsilon) h)_{kk}
\end{align*}
Therefore we prove
\[
|B_{kk}|_\varepsilon\leq C (\varepsilon \delta(t)+\varepsilon^2)(|h|_\varepsilon+|\nabla h|_\varepsilon)
\]
by using inequalities (4.4), (4.5) and (4.6). We get our estimate after integration.
\\

Next, we estimate $C_{kk}=(A_{kk}-2(\delta(t)-\varepsilon)\dot{R}^{i\mbox{ }i\mbox{ }}_{\mbox{ }k\mbox{ }k}h_{ii})$. First, we consider $A_{kk}$. When $k\neq 1$, we have
\begin{equation}\label{kn1}
\begin{array}{rl}
A_{kk}&=-2g^{ii}g_{kk}\bar{g}^{kk}\bar{R}_{ikik}\\
&=2(\bar{g}^{ii}-g^{ii})g_{kk}\bar{g}^{kk}\bar{R}_{ikik}
-2g_{kk}\bar{g}^{kk}\bar{R}_{kk}\\
&= 2g^{ii}\bar{g}^{ii}(w_{ii}+\delta(t)h_{ii}-\varepsilon h_{ii})g_{kk}\bar{g}^{kk}\bar{R}_{ikik}-2g_{kk}\bar{g}^{kk}\bar{R}_{kk}\\
&=2g^{ii}\bar{g}^{ii}(w_{ii}+\delta(t)h_{ii}-\varepsilon h_{ii})(w_{kk}+\delta(t)h_{kk}-\varepsilon  h_{kk})\bar{g}^{kk}\bar{R}_{ikik}\\
&\mbox{ }+2g^{ii}\bar{g}^{ii}(w_{ii}+\delta(t)h_{ii}-\varepsilon h_{ii})\bar{R}_{ikik}-2g_{kk}\bar{g}^{kk}\bar{R}_{kk}.\\
\end{array}
\end{equation}

Here we have
\begin{align*}
&2g^{ii}\bar{g}^{ii}(w_{ii}+\delta(t)h_{ii}-\varepsilon h_{ii})(w_{kk}+\delta(t)h_{kk}-\varepsilon h_{kk})\bar{g}^{kk}\bar{R}_{ikik}]\\
=&\sum_{i}(b^{ii} w_{ii}w_{kk}+ c^{ii}_{\mbox{ }\mbox{ }kk} w_{ii})+(d\delta^2(t)+e\varepsilon\delta(t)+f\varepsilon^2)_{kk}
\end{align*}
for some $b$ bounded, $c=O(\delta)$ and $d$, $e$, $f$ in $L^{\infty}([t_0,T_0+T];L^2(M))$. So we have
\begin{align*}
C_{kk}= &\sum_{i}(b^{ii} w_{ii}w_{kk}+ c^{ii}_{\mbox{ }\mbox{ }kk} w_{ii})+(d\delta^2(t)+e\varepsilon\delta(t)+f\varepsilon^2)_{kk}\\
&\mbox{ }+2g^{ii}\bar{g}^{ii}(w_{ii}+\delta(t)h_{ii}-\varepsilon h_{ii})\bar{R}_{ikik}-2(\delta(t)-\varepsilon)\dot{R}^{i\mbox{ }i\mbox{ }}_{\mbox{ }k\mbox{ }k}h_{ii}-2g_{kk}\bar{g}^{kk}\bar{R}_{kk}.\\
\end{align*}

 Therefore, we have to estimate the following terms:
\[
2g^{ii}\bar{g}^{ii}(w_{ii}+\delta(t)h_{ii}-\varepsilon h_{ii})\bar{R}_{ikik}-2\dot{R}^{i\mbox{ }i\mbox{ }}_{\mbox{ }k\mbox{ }k}(\delta(t)-\varepsilon)h_{ii}-2g_{kk}\bar{g}^{kk}\bar{R}_{kk}.
\]
First of all, we have $g^{ii}\bar{g}^{ii}w_{ii}\bar{R}_{ikik}=\bar{R}^{i\mbox{ }i}_{\mbox{ }k\mbox{ }k}w_{ii}+c^{ii}_{\mbox{ }\mbox{ }kk}w_{ii}$ for some $c=O(\delta)$. Secondly,
 since we have
\begin{align*}
&2g^{ii}\bar{g}^{ii}\bar{R}_{ikik}(\delta(t)-\varepsilon)h_{ii}-2\dot{R}^{i\mbox{ }i\mbox{ }}_{\mbox{ }k\mbox{ }k}(\delta(t)-\varepsilon)h_{ii}\\
=& 2(g^{ii}\bar{R}^{\mbox{ }\mbox{ }i}_{ik\mbox{ }k}(\delta(t)-\varepsilon) h_{ii}-\dot{g}^{ii}\dot{R}^{\mbox{ }\mbox{ }i}_{ik\mbox{ }k}(\delta(t)-\varepsilon) h_{ii})\\
=& 2(g^{ii}\dot{R}^{\mbox{ }\mbox{ }i}_{ik\mbox{ }k}(\delta(t)-\varepsilon) h_{ii}-\dot{g}^{ii}\dot{R}^{\mbox{ }\mbox{ }i}_{ik\mbox{ }k}(\delta(t)-\varepsilon) h_{ii})+2g^{ii}(\bar{R}^{\mbox{ }\mbox{ }i}_{ik\mbox{ }k}-\dot{R}^{\mbox{ }\mbox{ }i}_{ik\mbox{ }k})(\delta(t)-\varepsilon) h_{ii}\\
=& 2(g^{ii}-\dot{g}^{ii})\dot{R}^{\mbox{ }\mbox{ }i}_{ik\mbox{ }k}(\delta(t)-\varepsilon) h_{ii}+2g^{ii}(\bar{R}^{\mbox{ }\mbox{ }i}_{ik\mbox{ }k}-\dot{R}^{\mbox{ }\mbox{ }i}_{ik\mbox{ }k})(\delta(t)-\varepsilon) h_{ii}\\
=&2g^{ii}\dot{g}^{ii}(w_{ii}+(\delta(t)-\varepsilon) h_{ii})\dot{R}^{\mbox{ }\mbox{ }i}_{ik\mbox{ }k}(\delta(t)-\varepsilon) h_{ii}+2g^{ii}(\bar{R}^{\mbox{ }\mbox{ }i}_{ik\mbox{ }k}-\dot{R}^{\mbox{ }\mbox{ }i}_{ik\mbox{ }k})(\delta(t)-\varepsilon) h_{ii}
\end{align*}
and $|\bar{R}^{\mbox{ }\mbox{ }i}_{ik\mbox{ }k}-\dot{R}^{\mbox{ }\mbox{ }i}_{ik\mbox{ }k}|\leq C \varepsilon$, we have
\[
2g^{ii}\bar{g}^{ii}(w_{ii}+\delta(t)h_{ii}-\varepsilon h_{ii})\bar{R}_{ikik}-2\dot{R}^{i\mbox{ }i\mbox{ }}_{\mbox{ }k\mbox{ }k}(\delta(t)-\varepsilon)h_{ii}=\sum_{i}c^{ii}_{\mbox{ }\mbox{ }kk} w_{ii}+(d\delta^2(t)+e\varepsilon\delta(t)+f\varepsilon^2)_{kk}
\]
for some $c=O(\delta)$ and $d$, $e$, $f$ in $L^{\infty}([t_0,t_0+T];L^2(M))$. Finally we consider the term $2g_{kk}\bar{g}^{kk}\bar{R}_{kk}$. Since $\dot{R}_{kk}=0$ (Ricci flat), we have $|\bar{R}_{kk}|\leq |\bar{R}_{kk}-\dot{R}_{kk}|\leq \varepsilon$. So we can easily get the estimate of this term:
\[
2g_{kk}\bar{g}^{kk}\bar{R}_{kk}=cw_{kk}+\varepsilon\bar{f}_{kk}.
\]
for some $c=O(\delta)$ and $\bar{f}\in L^{\infty}([t_0,T_0+T];L^2(M))$. Therefore we can write $C_{kk}$ as following
\[
C_{kk}=2\bar{R}^{i\mbox{ }i}_{\mbox{ }k\mbox{ }k}w_{ii}+\sum_{i}(b^{ii} w_{ii}w_{kk}+ c^{ii}_{\mbox{ }\mbox{ }kk} w_{ii})+(d\delta^2(t)+e\varepsilon\delta(t)+f\varepsilon^2+\bar{f}\varepsilon)_{kk}
\]
for some $b$ bounded, $c=O(\delta)$ and $d$, $e$, $f$, $\bar{f}$ in $L^{\infty}([t_0,t_0+T];L^2(M))$.\\

For $k=1$, we will have an extra term 
\[
\frac{1}{2}g^{ll}g^{ll}((\bar{\nabla}_1w_{ll}+\bar{\nabla}_1\delta h_{ll})(\bar{\nabla}_1w_{ll}+\bar{\nabla}_1\delta h_{ll}))
\]
which can be expressed as 
\[
\sum_{l}((\bar{b}^{ll}\bar{\nabla}_1w_{ll})^2+\bar{c}^{ll}_{\mbox{ }\mbox{ }1}\bar{\nabla}_1w_{ll})+(d\delta^2(t))_{11}
\]
with $\bar{b}$ bounded, $\bar{c}=O(\delta)$ and $d$ in $L^{\infty}([t_0,t_0+T];L^2(M))$. We finish the estimate of $C_{kk}$ here.\\

Finally, $\lambda \varepsilon h_{kk}$ is bounded by $(\varepsilon\bar{f})_{kk}$ for some $\bar{f}\in L^{\infty}([t_0,t_0+T];L^2(M))$.\\

We can summarize that $w=(w_{kk})_{k=0,1,2,3}$ will satisfy a system of parabolic equations
\begin{align}
\frac{\partial}{\partial t}w = E( w) + F;
\end{align}
with $w=0 \mbox{ on } M\times\{t_0\}$ and $\partial M\times [t_0,t_0+T]$, where $F=d\delta^2(t)+e\varepsilon\delta(t)+f\varepsilon^2+\bar{f}\varepsilon$ and
\begin{align*}
E( w_{kk})=g^{\alpha\alpha}\bar{\nabla}_\alpha\bar{\nabla}_\alpha w_{kk} &+2\bar{R}^{i\mbox{ }i}_{\mbox{ }k\mbox{ }k}w_{ii}\\
&+\delta_{k1}((\bar{b}^{ll} \bar{\nabla}_1w_{ll})^2+\bar{c}^{ll}_{\mbox{ }\mbox{ }1} \bar{\nabla}_1w_{ll})+ \sum_{i}b^{ii}w_{ii}w_{kk}+c^{ii}_{\mbox{ }\mbox{ }kk}w_{ii}
\end{align*}
($\delta_{k1}$ is the Kronecker's delta).\\

 Because $T\leq \min\{T_1,T_2\}$, this implies $w_{kk}$, $\bar{\nabla}w_{kk}$ are bounded. So $w$ can be regarded as a solution of linear equations
\begin{align}
\frac{\partial}{\partial t}\mathbf{v} = L \mathbf{v} + F
\end{align}
where
\[
L v_k=g^{\alpha\alpha}\bar{\nabla}_\alpha\bar{\nabla}_\alpha v_{k} +\delta_{k1}( (\bar{b}^{ll})^2\bar{\nabla}_1w_{ll}\bar{\nabla}_1v_l+\bar{c}^{ll}_{\mbox{ }\mbox{ }1} \bar{\nabla}_1v_l)+ \sum_{i}(b^{ii}w_{ii})v_k + c^{ii}_{kk}v_i+2\bar{R}^{i\mbox{ }i}_{\mbox{ }k\mbox{ }k}v_{i}
\]
and $\|F\|^{(\varepsilon)}_{2}\leq C (\varepsilon+ \varepsilon\delta(t)+\delta(t)^2+\varepsilon^2)$. Therefore, by lemma 5.9, we can apply the energy estimate of linear parabolic equation from [6] on $M$. We will have
\begin{align*}
(\| w \|^{(\varepsilon)}_2+\|\bar{\nabla} w \|^{(\varepsilon)}_2)(t)&+\|\partial_t w \|_{L^2([t_0,t_0+T];L^2(M))}^{(\varepsilon)}\\&\leq C_0 (\|F\|^{(\varepsilon)}_{L^2([t_0,t_0+T];L^2(M))}+\|w\|^{(\varepsilon)}_{H^1(M)}(t_0))\\
&\leq C_0T^{\frac{1}{2}}(\varepsilon+\varepsilon\delta(t_0+T)+\delta(t_0+T)^2+\varepsilon^2)
\end{align*}
for all $t\in [t_0,t_0+T]$. So
\begin{align*}
(\| w \|^{(\varepsilon)}_2+\|\bar{\nabla} w \|^{(\varepsilon)}_2)(t)& \leq C_0T^{\frac{1}{2}} e^{-\lambda T} (\varepsilon+\varepsilon\delta(t_0)+\delta(t_0)^2+\varepsilon^2)\\
&\leq C_0T^{\frac{1}{2}}e^{-\lambda \bar{T}} (\varepsilon+\varepsilon\delta(t)+\delta(t)^2+\varepsilon^2).
\end{align*}
for all $t\in [t_0,t_0+T]$. Because $\|\cdot\|^{(0)}$ and $\|\cdot\|^{(\varepsilon)}$ are equivalent, we have completed the proof.
\end{proof}
\begin{remark}
Here we notice that $\varepsilon \bar{f}_{kk}=2\bar{R}_{kk}-\lambda \varepsilon h_{kk}=(\frac{\partial}{\partial \delta}R_{kk}(\dot{g}+\delta h)|_{\delta=0}-\lambda h_{kk})\varepsilon^2+O(\varepsilon^2)$. In section 3, we already know that 
\[
\frac{\partial}{\partial \delta}R_{kk}(\dot{g}+\delta h)|_{\delta=0}=-(\Delta_L h)_{kk}-\nabla_k\nabla_k H-2\nabla_k(\zeta h)_k
\]
for all $k$. However, since $\dot{g}$ and $h$ are radially symmetric and diagonal, we have
$\nabla_k\nabla_k H+2\nabla_k(\zeta h)_k=0$ for all $k\neq 1$. This means that $\varepsilon\bar{f}_{kk}=O(\varepsilon^2)$ for $k\neq 1$.
\end{remark}
We have the following corollary immediately.
\begin{cor}
There are universal constants $\bar{T}>0$, $C_0>0$ and $\iota$ such that if we fix  $\varepsilon\leq \iota$ and $T\leq\bar{T}$, then
\[
\|w\|^{(0)}_{2}(t), \|\bar{\nabla}w\|^{(0)}_{2}(t)\leq C_0T^{\frac{1}{2}} \delta(t) 
\]
for $\varepsilon$ is small enough and all $t\in[t_0,t_0+T]$. 
\end{cor}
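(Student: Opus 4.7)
The plan is to derive this corollary as a direct bookkeeping consequence of Theorem 5.10, using the time convention built into the set-up of Section 5.3. Recall that the initial datum $\bar g = g_0 + \varepsilon h$ is meant to lie on the axis $\{g_0+\delta(t)h\}$ of the estimate cone at time $t_0$, i.e.\ $\varepsilon=\delta(t_0)$ (or at worst $\varepsilon\leq\delta(t_0)$). Since $\lambda<0$, the function $\delta(t)=e^{-\lambda t}=e^{|\lambda|t}$ is strictly increasing, so for every $t\in[t_0,t_0+T]$ one has $\varepsilon\leq\delta(t)$; this is the single inequality that drives the whole argument.

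Given this, I would shrink $\iota$ (and if needed the $\bar T$ furnished by Theorem 5.10) so that
\[
\delta(t)\;\leq\;\delta(t_0+\bar T)\;\leq\;\iota\, e^{|\lambda|\bar T}\;\leq\;1
\]
for every $t\in[t_0,t_0+T]$ and every $\varepsilon\leq\iota$. Combined with $\varepsilon\leq\delta(t)$, this yields the elementary estimate
\[
\varepsilon+\varepsilon\delta(t)+\delta(t)^2+\varepsilon^2\;\leq\;\delta(t)+\delta(t)^2+\delta(t)^2+\delta(t)^2\;\leq\;4\,\delta(t).
\]
Plugging into Theorem 5.10 and absorbing the factor $4$ into $C_0$ produces exactly the claimed bound
\[
\|w\|^{(0)}_2(t)\;,\;\|\bar\nabla w\|^{(0)}_2(t)\;\leq\; C_0 T^{\frac12}\delta(t).
\]

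The one point worth being careful about is that the hypothesis $\varepsilon\leq\delta(t_0)$ is really needed, and cannot be replaced by the improvement in Remark 5.11 alone. That remark shows that the linear-in-$\varepsilon$ forcing term $\varepsilon\bar f_{kk}$ from Theorem 5.10 is in fact $O(\varepsilon^2)$ for $k\neq 1$ (because $2\partial_\delta R_{kk}|_{\delta=0}=\lambda h_{kk}$ once the de Turck correction $\nabla_k\nabla_k H+2\nabla_k(\zeta h)_k$ vanishes by radial symmetry); however for $k=1$ this correction does not vanish, so $\varepsilon\bar f_{11}$ remains genuinely $O(\varepsilon)$. It is therefore the growth hypothesis $\varepsilon\leq\delta(t)$ supplied by the choice of $t_0$, rather than any finer analysis of the forcing, that closes the argument. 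No additional PDE work beyond Theorem 5.10 is required.
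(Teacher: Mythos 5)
Your proof is correct and is essentially the argument the paper has in mind: since $t_0$ is defined so that $\delta(t_0)=\varepsilon$ (this is made explicit later, $t_0:=\frac{1}{-\lambda}\log\varepsilon$ in the long-time existence theorem), and $\delta$ is increasing because $\lambda<0$, one gets $\varepsilon\leq\delta(t)$ on $[t_0,t_0+T]$, whence $\varepsilon+\varepsilon\delta(t)+\delta(t)^2+\varepsilon^2\leq 4\delta(t)$ after shrinking $\iota$ and $\bar T$ to ensure $\delta(t)\leq 1$, and the corollary follows directly from the preceding energy-estimate theorem with the factor $4$ absorbed into $C_0$. Your observation that the remark about $\varepsilon\bar f_{kk}=O(\varepsilon^2)$ for $k\neq 1$ does not, by itself, close the argument (the $k=1$ component stays $O(\varepsilon)$) is also accurate; that remark is invoked elsewhere in the paper, not here. (Minor: the paper's numbering for the theorem and remark you cite is 5.12 and 5.13 rather than 5.10 and 5.11, but the content you reference is the right one.)
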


We can improve the regularity and get the following estimate.

\begin{theorem}
Let $w$ be defined as above. We have
\begin{align}
|w|_0(t), |\bar{\nabla}w|_0 (t), \mbox{ } |\bar{\nabla}\bar{\nabla}w|_{0}(t)\mbox{ and } |\bar{\nabla}^{(3)}w|_{0}(t) \leq C \delta(t)
\end{align}
for all $x\in M$, $t\in [t_0,t_0+T]$, $T\leq \bar{T}$ and a universal constant $C$.
\end{theorem}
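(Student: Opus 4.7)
The plan is to bootstrap the $L^2$ bound of Corollary 5.11 into the pointwise $C^3$ estimate stated here by combining higher-order parabolic energy estimates on spatial derivatives of $w$ with the divergence-theorem argument of Proposition 4.2, which converts $L^2$ control into $L^\infty$ control for radially symmetric tensors. The key structural fact I would exploit is that the linear system (5.6) has bounded coefficients on $[t_0, t_0+T]$ by Theorem 5.5, inhomogeneity $F$ of size $O(\delta(t))$ (after applying Corollary 5.11 to absorb the $w$-dependent coefficients into the linear part), and a source term $h$ whose derivatives are controlled by universal constants via Propositions 4.5--4.6.

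First I would differentiate the linear system (5.6) in space $k$ times, for $k = 1, 2, 3, 4$, producing
\[
\frac{\partial}{\partial t}\bar{\nabla}^{(k)} w = g^{\alpha\alpha}\bar{\nabla}_\alpha\bar{\nabla}_\alpha \bar{\nabla}^{(k)} w + \mathcal{C}_k(w) + \bar{\nabla}^{(k)} F,
\]
where $\mathcal{C}_k(w)$ collects commutator terms involving $\bar{R}$ and $\bar{\nabla}^{(j)}\bar{g}$ together with lower-order factors $\bar{\nabla}^{(j)} w$, $j \le k$. Because Shi's Theorem 5.5 already gives $|\bar{\nabla} g| \le C$ on $[t_0, t_0+T]$, the apparently quadratic pieces in the original system (the products $b^{ii} w_{ii} w_{kk}$ and the gradient-squared term $\delta_{k1}(\bar{b}^{ll} \bar{\nabla}_1 w_{ll})^2$) remain linear with bounded coefficients after differentiation, so the differentiated system retains the same parabolic structure as (5.6).

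Next I would apply the standard parabolic energy estimate of [6] inductively, pairing the differentiated equation with $\bar{\nabla}^{(k)} w$ and integrating by parts on $M$. Taking Corollary 5.11 as the base case $k = 0$ and using $|\bar{\nabla}^{(\ell)} h| \le C_\ell$ to bound $\bar{\nabla}^{(k)} F$ in $L^2$, I expect to obtain
\[
\|\bar{\nabla}^{(k)} w\|^{(0)}_{L^\infty([t_0, t_0+T]; L^2(M))} \le C_k T^{1/2} \delta(t), \quad k = 0, 1, 2, 3, 4.
\]
The boundary flux contributions in each energy identity vanish at $p = 1$ by Lemma 5.8 (applied to $v$, together with the analogous decay of $(\delta(t)-\varepsilon)h$ from Proposition 4.6) and at $p = 0$ by Proposition 4.5.

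Finally, to convert these $L^2$ bounds into the pointwise statement, I would run the divergence-theorem computation of Proposition 4.2 with the vector field $Y = |\bar{\nabla}^{(k)} w|(\bar{g}^{11})^{1/2} \partial_1$. Radial symmetry makes $|\bar{\nabla}^{(k)} w|$ a function of $p$ alone, so the same argument yields
\[
|\bar{\nabla}^{(k)} w|(x, t) \le C \|w\|^{(0)}_{W^{k+1, 2}(M)}(t) \le C \delta(t)
\]
for $k = 0, 1, 2, 3$, which is the stated estimate. The main obstacle is controlling the commutator and nonlinear remainders $\mathcal{C}_k(w)$ uniformly in $\varepsilon$ and $t_0$, so that the $\delta(t)$ scaling inherited from the inhomogeneity is not lost at any step of the induction; the $\varepsilon$-independent bounds on $|\bar{\nabla}^{(\ell)} h|$ from Propositions 4.5--4.6 and the pointwise control of $|\bar{\nabla} w|$ from Theorem 5.5 are precisely what makes this closure possible.
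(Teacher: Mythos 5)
Your plan differs from the paper's in a structural way, and the difference matters. The paper does not differentiate the system (5.6) in space four times. Instead it takes a single \emph{time} derivative, obtaining the equation $\partial_t w_t = L'(w_t) + K + F_t$ with $|K|_\varepsilon \le O(\delta)(|\bar\nabla\bar\nabla w|_\varepsilon + |\bar\nabla w|_\varepsilon + |w|_\varepsilon)$, runs one energy estimate on that to control $\|w_t\|_2$ and $\|\bar\nabla w_t\|_2$, passes to sup-norm bounds on $w$, $\bar\nabla w$, $w_t$ via Proposition 4.2, and then recovers $\bar\nabla\bar\nabla w$ \emph{algebraically} from the PDE at fixed $t$ (solving $g^{\alpha\alpha}\bar\nabla_\alpha\bar\nabla_\alpha w = w_t - \ldots$), and $\bar\nabla^{(3)}w$ from the first derivative estimate of Ladyzhenskaya--Solonnikov--Uraltseva applied to the $w_t$-equation. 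The reason for this choice is exactly what your plan glosses over.

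The gap is in the sentence ``the apparently quadratic pieces\ldots remain linear with bounded coefficients after differentiation, so the differentiated system retains the same parabolic structure as (5.6).'' This is true only for $k=1$. For $k \ge 2$, $\mathcal{C}_k(w)$ contains terms of the form $(\bar\nabla^{(j)}g^{\alpha\alpha})(\bar\nabla^{(k+2-j)}w)$ with $j \ge 2$, and since $\bar\nabla^{(j)}g = (\delta(t)-\varepsilon)\bar\nabla^{(j)}h + \bar\nabla^{(j)}w$ with $\bar\nabla\bar g \equiv 0$, these coefficients involve precisely the quantity $\bar\nabla^{(j)}w$ that you are still trying to estimate. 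Theorem 5.5 supplies a uniform bound only on $|\bar\nabla g|$; there is no $\varepsilon$-uniform, $t_0$-uniform sup-norm bound on $|\bar\nabla^{(2)}g|$ available at this stage (Proposition 7.2 gives one, but with a $(t-t_0)^{-1/2}$ blow-up, which would destroy the clean $\delta(t)$ scaling you want to propagate). So the differentiated equations are not linear systems with bounded coefficients; they are nonlinear in a way that requires either careful absorption of the problematic products via Young's inequality and Gronwall, or interleaving the $L^2\to L^\infty$ conversion of Proposition 4.2 between orders in a precise sequence that you do not specify. (Note also the induction is not well-ordered as stated: at step $k$ you would want $\|\bar\nabla^{(2)}w\|_\infty$ from Proposition 4.2, which needs $\|\bar\nabla^{(3)}w\|_2$, i.e.\ step $3$, yet step $2$ already needs to bound a product like $(\bar\nabla^{(2)}g)(\bar\nabla^{(2)}w)$.) Differentiating in time avoids all of this: $\partial_t$ commutes with $\bar\nabla$, $\partial_t g$ is bounded by Theorem 5.5, and the resulting $K$ is manifestly lower order in space, so no new unbounded coefficients appear. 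If you insist on spatial differentiation you would need to argue the nonlinear closure and the order of Proposition 4.2 applications explicitly; as written the step ``retains the same parabolic structure'' does not hold.
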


\begin{proof} Because $\|\cdot\|^{(0)}$ and $\|\cdot\|^{(\varepsilon)}$ are equivalent, we can just prove these inequalities using the $\varepsilon$-norm. To get the improved regularity, we notice that $|h|$, $|\nabla h| \in L^2(M)\cap L^{\infty}(M)$. The energy estimate tells us that
\begin{align}
\|w_t\|^{(\varepsilon)}_{L^2([t_0, t_0+T];L^2(M))}\leq C_0(\varepsilon+\varepsilon\delta(t_0+T)+\delta(t_0+T)^2+\varepsilon^2)
\end{align}
which implies $\partial_td$, $\partial_te$, $\partial_tf$ and $\partial_t\bar{f}$ are in $L^2([t_0, t_0+T];M)$. Therefore we have $F\in H^1([t_0,t_0+T];L^2(M))$. To get the improved regularity, we consider the equation for $w_t$ by differentiate equation (5.5) with respect to $t$ on the both sides. We will have a parabolic equation of the form 
\begin{align}
\frac{\partial}{\partial t}w_t=L'(w_t)+K+F_t
\end{align}
where $|K|_{\varepsilon}\leq O(\delta)(|\bar{\nabla}\bar{\nabla}w|_\varepsilon+|\bar{\nabla}w|_\varepsilon+|w|_\varepsilon)$. By lemma 5.9, we can apply the energy estimate 
\begin{align*}
(\| w_t \|^{(\varepsilon)}_2+\| \bar{\nabla} w_t\|^{(\varepsilon)}_2)(t)\leq C_0(\|K+F_t\|^{(\varepsilon)}_{L^1([t_0,t_0+T];L^2(M))}+\|w\|^{(\varepsilon)}_{H^3(M)}(t_0))
\end{align*}
which implies that $w_t$ and $\bar{\nabla}w_t$ are also in $L^2(M)$. By equation (5.5) and proposition 4.2, the sup norms of $w$, $\bar{\nabla}w$ and $w_t$ are bounded. Thus we have proved that the asserted bound for the sup norm of the boundedness of sup norm of $\bar{\nabla}\bar{\nabla}w$.\\

To estimate $\bar{\nabla}^{(3)}w$, we should prove that $\bar{\nabla}w_t$ is bounded first. We consider the equation $\frac{\partial}{\partial t}w_t=L'(w_t)+F_t$ again. We use the first derivative estimate in [2] (Theorem 6.1 in VII) to get the sup norm of $\bar{\nabla}w_t$. We use this bound to bound $|\bar{\nabla}^{(3)}w|$ from the equation $\partial_t\bar{\nabla}w=\bar{\nabla}L w+\bar{\nabla}F$ and the bounds for $w$, $\bar{\nabla}w$, $\bar{\nabla}\bar{\nabla}w$ and $\bar{\nabla}w_t$.
\end{proof}

\subsection{Inductive estimates}
In this subsection, we will prove a lemma which is essential in our proof of the long-time existence.
\begin{lemma}Let $\iota$ be the constant given by theorem 5.12. There exist universal constants $M_1,M_2,M_3,M_4,T>0$ and $\bar{N}\leq 0$ such that if $w$ is the solution of equation (5.8) defined on $M\times [t_0,\bar{N}]$ with $\varepsilon\leq\iota$ and  \\
1. $w$ satisfies
\begin{align}
&\|w\|^{(\varepsilon)}_{2}(t)\leq M_1\delta(t),\nonumber\\
 &\|\bar{\nabla}w\|^{(\varepsilon)}_{2}(t)\leq M_2\delta(t), \\
 &\|w_t\|^{(\varepsilon)}_{2}(t)\leq M_3\delta(t), \nonumber\\
 &\|\bar{\nabla}w_t\|^{(\varepsilon)}_{2}(t)\leq M_4\delta(t), \nonumber
\end{align}
\ \ \ for all $t\in [t_0,t_0+(k-1)T]$ and\\
2. $t_0+kT<\bar{N}$,\\
then the inequalities (5.12) hold for $t\in [t_0,t_0+kT]$.
\end{lemma}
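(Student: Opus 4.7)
The plan is to run the short-time energy estimate of Theorem~5.10, together with the improved-regularity step from Theorem~5.12, one more time on the successor interval $[s_0, s_0+T]$, where $s_0 = t_0 + (k-1)T$, treating $w(s_0)$ as non-zero initial data rather than zero. Since the flow is already assumed to exist on $[t_0, \bar{N}]$, there is no short-time existence issue; everything reduces to quantitative estimates that track the initial-data and forcing contributions.

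First I would re-derive the system $\partial_t w = Lw + F$ with $F = d\delta^2 + e\varepsilon\delta + f\varepsilon^2 + \bar{f}\varepsilon$, exactly as in the proof of Theorem~5.10, and verify that its coefficients still satisfy the same structural bounds on the translated interval. Using Remark~5.12.1 for the $\bar{f}$ term, the inductive $L^2$-bounds on $w$ and $\bar{\nabla} w$ from~(5.12), and the $C^0$--$C^3$ sup-norm bounds guaranteed by Theorem~5.12 once $\varepsilon$ is small, each summand of $F$ is controlled pointwise; after using $\delta(t) \ge \varepsilon$ on $[t_0,\bar{N}]$ (which holds once $t_0$ is chosen so that $\delta(t_0) = \varepsilon$) and taking $\delta(\bar{N})$ small, one obtains $\|F(\tau)\|^{(\varepsilon)}_2 \le C\delta(\tau)$ uniformly on $[s_0, s_0+T]$, so $\|F\|^{(\varepsilon)}_{L^2([s_0,s_0+T];L^2(M))} \le CT^{1/2}\delta(s_0+T)$.

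Next I would apply the linear parabolic energy estimate to~(5.8) on $[s_0, s_0+T]$, now with nonzero initial data, giving
\begin{align*}
(\|w\|_2 + \|\bar{\nabla} w\|_2)(t) + \|w_t\|_{L^2L^2}
\le C_1\|w(s_0)\|_{H^1} + C_2\|F\|_{L^2L^2}.
\end{align*}
Plugging in the inductive bound $\|w(s_0)\|_{H^1} \le (M_1 + M_2)\delta(s_0)$ and dividing by $\delta(t)$ (using $\delta(s_0) \le \delta(t) \le e^{-\lambda T}\delta(s_0)$ on $[s_0,s_0+T]$), this rearranges to
\begin{align*}
\|w\|_2(t),\ \|\bar{\nabla} w\|_2(t) \le \bigl[C_1(M_1+M_2) + C_2 T^{1/2} e^{-\lambda T}\bigr]\delta(t).
\end{align*}
The $w_t$ and $\bar{\nabla} w_t$ bounds come from differentiating (5.8) in $t$ and repeating the same argument on the equation for $w_t$, as in the improved-regularity part of Theorem~5.12, producing the analogous inequality involving $M_3, M_4$.

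The principal obstacle is closing the induction with universal constants: one needs $C_1(M_1+M_2) + C_2 T^{1/2} e^{-\lambda T} \le M_1$ and the analogous inequalities for $M_2, M_3, M_4$ simultaneously, for a single choice of $T$, $M_i$ and $\bar{N}$. The parameters should be chosen in the order: first $T \le \bar{T}$ small enough that the Gronwall constant $C_1 = 1 + O(T)$ is close to $1$ and $C_2 T^{1/2}$ is small; then $\bar{N}$ so negative that $\delta(\bar{N})$ is small enough for the nonlinear cross-terms $b^{ii} w_{ii} w_{kk}$ to be absorbed into the linear estimate; finally $M_1,\ldots,M_4$ large enough that the forcing contribution is a small fraction of $M_i$. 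The delicate point is that subtracting $(\delta-\varepsilon)h$ in the definition of $w$ has removed the dominant growing mode of the linearization, so that the Gronwall amplification $C_1$ on the remainder of $L$ is compatible with the $\delta$-weight $e^{-\lambda T}$ and the induction closes.
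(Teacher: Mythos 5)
Your proposal has a genuine gap at the step where you try to close the induction. You apply the linear parabolic energy estimate once on the successor interval $[s_0, s_0+T]$ with \emph{nonzero} initial data $w(s_0)$ satisfying $\|w(s_0)\|_{H^1} \leq (M_1+M_2)\delta(s_0)$, and hope to land back inside the same bounds. But there is no universal choice of constants for which $C_1(M_1+M_2) + C_2 T^{1/2}e^{-\lambda T} \leq M_1$: even if the Gronwall constant $C_1$ were exactly $1$, the left side already contains the extra $M_2 > 0$, so the inequality fails. Worse, $C_1$ is not $1 + O(T)$ in the way you need. The zeroth-order coefficient in the linear operator $L$ is $2\bar R^{i\ i}_{\ k\ k}$, whose bottom eigenvalue (via the functional $\mathbf a$) is precisely $\lambda < 0$. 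Subtracting $(\delta-\varepsilon)h$ does not project $w$ off the unstable direction --- $w$ can have a nonzero component along $h$ at time $s_0$ --- so the sharpest Gronwall constant available is $e^{-\lambda T} > 1$. A per-step bootstrap of the form $\|w(t_0+kT)\| \leq C_1\|w(t_0+(k-1)T)\| + \text{(forcing)}$ with $C_1 \geq 1$ necessarily degrades over $k$ steps, and since the required estimate must be universal in $\varepsilon$ (hence in $k$, which can be arbitrarily large as $t_0 \to -\infty$), the induction cannot close this way.

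The paper avoids this by never restarting the energy estimate at $s_0$. It forms the differential inequality
\[
\frac{\partial}{\partial t}(\|w\|^{(\varepsilon)}_2)^2 \leq -2\mathbf a(w) + \text{(error)} \leq -2\lambda(\|w\|^{(\varepsilon)}_2)^2 + A\delta^3(t) + B\varepsilon\delta(t)
\]
on the \emph{entire} interval $[t_0, t_0+kT]$ and integrates it via Gronwall anchored at the exact initial condition $w(t_0)=0$. The inductive hypothesis (5.12) and the short-time estimate (5.18) are used only to control the nonlinear error terms and the forcing $F$ (hence the constants $A$, $B$), not to bound $w$ at an intermediate time. Because the Gronwall growth rate $-2\lambda$ exactly matches the decay of the weight $\delta^2(t) = e^{-2\lambda t}$, and because the initial-data term vanishes, one obtains $\|w\|_2^2 \leq M_1^2\delta^2(t)$ with a choice of $M_1$ that depends only on $\lambda$ and the forcing size $V$, not on $k$. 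This global-in-time Gronwall with zero initial data and the sharp eigenvalue constant is the mechanism that makes the constants universal; your per-step restart throws that structure away.
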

\begin{proof} By equation (5.8), $w$ satisfies
\begin{align}
\frac{\partial}{\partial t}w_{kk}=g^{\alpha\alpha}\bar{\nabla}_{\alpha}\bar{\nabla}_{\alpha}w_{kk} &+2\bar{R}^{i\mbox{ }i}_{\mbox{ }k\mbox{ }k}w_{ii}\\
&+\delta_{k1}((b^{kk} \bar{\nabla}_1w_{kk})^2+\bar{c}^{ii}_{\mbox{ }\mbox{ }1} \bar{\nabla}_1w_{ii})\nonumber\\&+ \sum_{i}b^{ii}w_{ii}w_{kk}+c^{ii}_{\mbox{ }\mbox{ }kk}w_{ii}\nonumber\\
&+d_{kk}\delta^2(t)+e_{kk}\varepsilon\delta(t)+f_{kk}\varepsilon^2+\bar{f}_{kk}\varepsilon\nonumber
\end{align}
with $b$ bounded, $\bar{c}=O(\delta)$, $c=O(\delta)$ and $d$, $e$, $f$, $\bar{f}$ in $L^{\infty}([t_0,t_0+(k-1)T];L^2(M))$ for all $t\in[t_0,t_0+(k-1)T]$.\\

Now let $\bar{T}$ be the constant given by theorem 5.12. We fix the constants $M_1$, $M_2$, $M_3$, $M_4$, $T<\bar{T}$ and $\bar{N}\leq \min_{i=1,2,3,4}\{\frac{1}{-\lambda}\log(M_i^{-1})\}$ which will be specified later. This implies that $M_i\delta(t)\leq 1$ for all $i=1,2,3,4$ and $t\in[t_0,\bar{N}]$.\\

In what follows, $\bar{C}$ denotes a constant that depends only on $M_1$, $M_2$ and $M_3$; its precise value can be assumed to increase between each successive appearance. Similarly $\bar{D}$ denotes a constant that depending only on $M_1$, $M_2$, $M_3$ and $M_4$.\\

First of all, by (5.12) and $M_i\delta(t)\leq 1$ for all $t\in[t_0,t_0+(k-1)T]$, we can find a universal constant $V$ such that
\begin{align}
&\|d\|_2^{(\varepsilon)}(t),\|e\|_2^{(\varepsilon)}(t),\|f\|_2^{(\varepsilon)}(t),\|\bar{f}\|_2^{(\varepsilon)}(t),\nonumber\\
&\|d_t\|_2^{(\varepsilon)}(t),\|e_t\|_2^{(\varepsilon)}(t),\|f_t\|_2^{(\varepsilon)}(t),\|\bar{f}_t\|_2^{(\varepsilon)}(t)\leq V
\end{align}
for all $t\in [t_0,t_0+(k-1)T]$.\\

Secondly, by (5.12) and proposition 4.2, we have
\begin{align}
\sup_{x\in M}|\bar{\nabla}^{(n)}w|_{\varepsilon}(x,t)\leq C\delta(t)
\end{align}
for all $t\in [t_0,t_0+(k-1)T]$, $0\leq n\leq 3$ and a constant $C$ depending on $M_1,\cdots ,M_{n+1}$. Using (5.15) and the fact that $\bar{c}=O(\delta)$, $c=O(\delta)$, we have 
\begin{align}
\|\delta_{k1}(b^{kk} (\bar{\nabla}_1w_{kk})^2+\bar{c}^{ii}_{\mbox{ }\mbox{ }1} \bar{\nabla}_1w_{ii})+ \sum_{i}b^{ii}w_{ii}w_{kk}+c^{ii}_{\mbox{ }\mbox{ }kk}w_{ii}\|^{(\varepsilon)}_2(t)\leq \bar{C}\delta^2(t)
\end{align}
for all $t\in[t_0,t_0+(k-1)T]$. Therefore we have
\begin{align*}
\int_M \langle\frac{\partial}{\partial t}w, w\rangle \leq\int_M\langle g^{\alpha\alpha}\bar{\nabla}_{\alpha}\bar{\nabla}_{\alpha}w,w\rangle &+2\int_M\langle\bar{R}^{i\mbox{ }i}_{\mbox{ }k\mbox{ }k}w_{ii}, w_{kk}\rangle\\
&+\bar{C}\delta^2(t)\|w\|_2^{(\varepsilon)}\\
&+\int_M\langle d_{kk}\delta^2(t)+e_{kk}\varepsilon\delta(t)+f_{kk}\varepsilon^2+\bar{f}_{kk}\varepsilon, w\rangle\\
\leq\int_M\langle g^{\alpha\alpha}\bar{\nabla}_{\alpha}\bar{\nabla}_{\alpha}w,w\rangle &+2\int_M\langle\bar{R}^{i\mbox{ }i}_{\mbox{ }k\mbox{ }k}w_{ii}, w_{kk}\rangle\\
&+\bar{C}\delta^3(t)+3M_1V\delta^3(t)+M_1V\varepsilon\delta(t)
\end{align*}

Recall that $g-\bar{g}=w+(\delta(t)-\varepsilon)h$. Because 
\begin{align*}
g^{\alpha\alpha}\bar{\nabla}_{\alpha}\bar{\nabla}_{\alpha}w_{kk}&=\bar{\nabla}_{\alpha}(g^{\alpha\alpha}\bar{\nabla}_{\alpha}w_{kk})-(\bar{\nabla}_{\alpha}g^{\alpha\alpha})\bar{\nabla}_{\alpha}w_{kk}\\
&=\bar{\nabla}_{\alpha}(g^{\alpha\alpha}\bar{\nabla}_{\alpha}w_{kk})-(\bar{\nabla}_{\alpha}(g^{\alpha\alpha}-\bar{g}^{\alpha\alpha}))\bar{\nabla}_{\alpha}w_{kk},
\end{align*}
by (5.12), we do the integration by parts to get
\begin{align}
\int_M\langle g^{\alpha\alpha}\bar{\nabla}_{\alpha}\bar{\nabla}_{\alpha}w,w\rangle \leq -\int_M |\dot{\nabla}w|^2+ \bar{C} \delta^3(t).
\end{align}
We also have
\begin{align*}
2\int_M\langle\bar{R}^{i\mbox{ }i}_{\mbox{ }k\mbox{ }k}w_{ii}, w_{kk}\rangle&\leq 2\int_M \dot{R}^{ikik}w_{ii}w_{kk}+O(\varepsilon)\bar{C}\delta^2(t)\\
&\leq 2\int_M \dot{R}^{ikik}w_{ii}w_{kk}+ \bar{C}\delta^3(t)
\end{align*}
 by remark 4.12.\\

 Now we recall the definition of $\mathbf{a}$ and $\lambda$ in section 3.2. We can conclude that 
\begin{align*}
\frac{\partial}{\partial t}(\|w\|^{(\varepsilon)}_2)^2\leq -(\int_M |&\dot{\nabla}w|^2-2\dot{R}^{ikik}w_{ii}w_{kk})+\bar{C}\delta^3(t)
+3M_1V\delta^3(t)+M_1V\varepsilon\delta(t))\\
\leq -2\mathbf{a}(w)&+\bar{C}\delta^3(t)
+3M_1V\delta^3(t)+M_1V\varepsilon\delta(t))\\
\leq -2\lambda(\|w&\|^{(\varepsilon)}_2)^2+\bar{C}\delta^3(t)+3M_1V\delta^3(t)+M_1V\varepsilon\delta(t))\\
\leq -2\lambda(\|w&\|^{(\varepsilon)}_2)^2+(\bar{C}+3M_1V)\delta^3(t)+M_1V\varepsilon\delta(t))
\end{align*}
for all $t\in[t_0,t_0+(k-1)T]$.\\

 For $t\in [t_0+(k-1)T,t_0+kT]$, by using the energy estimate on equation (5.7), we have
\begin{align}
\|w\|^{(\varepsilon)}_2(t)+\|\bar{\nabla}w\|^{(\varepsilon)}_2(t)&\leq C_0(\|F\|^{(\varepsilon)}_{L^2([t_0+(k-1)T,t_0+kT];L^2(M))}+\|w\|_{H^1}^{(\varepsilon)}(t_0+(k-1)T))\\
&\leq (4C_0VT^{\frac{1}{2}}+C_0(M_1+M_2))\delta(t)\nonumber
\end{align}
where $C_0$ is the constant given by theorem 5.12. So if we replace (5.12) by (5.18), we will have the estimate
\begin{align*}
\frac{\partial}{\partial t}(\|w\|^{(\varepsilon)}_2)^2\leq &-2\lambda(\|w\|^{(\varepsilon)}_2)^2\\&+(\bar{C}+3(4C_0VT^{\frac{1}{2}}+C_0(M_1+M_2))V)\delta^3(t)+M_1V\varepsilon\delta(t)
\end{align*}
for all $t\in[t_0+(k-1)T,t_0+kT]$.\\

Therefore, if we choose $M_1= -2\lambda V$, $A=\bar{C}+2M_1V+3(4C_0VT^{\frac{1}{2}}+C_0(M_1+M_2))V$,
 we will have
\begin{align*}
\frac{\partial}{\partial t}&(\|w\|^{(\varepsilon)}_2)^2\leq -2\lambda(\|w\|^{(\varepsilon)}_2)^2+A\delta^3(t)-\frac{\lambda}{2} M^2_1\varepsilon\delta(t)
\end{align*}
for all $t\in [t_0,t_0+kT]$. Now by Grownwall's inequality, we have
\begin{align*}
(\|w\|^{(\varepsilon)}_2)^2&\leq e^{-2\lambda t}(\int_0^{t-t_0}A\delta^3(s)e^{2\lambda s}ds+\int_0^{t-t_0}-\frac{\lambda}{2} M^2_1\varepsilon\delta(s)e^{2\lambda s}ds)\\
&\leq \frac{2}{-\lambda}A\delta^3(t)+\frac{1}{2}M^2_1\delta^2(t)
\end{align*}
for all $t\in [t_0,t_0+kT]$. So
\[
(\|w\|^{(\varepsilon)}_2)^2 \leq M^2_1\delta^2(t)
\]
for all $t\in [t_0,t_0+kT]$ provided $t_0+kT\leq \frac{1}{-\lambda}\log(\frac{-\lambda M_1^2}{4A})$. We use $N_1$ in what follows to denote $\frac{1}{-\lambda}\log(\frac{-\lambda M_1^2}{4A})$.\\

Next, we define $M_2$. By multiplying both sides of (5.13) by $w_t$ then integrating both sides, it follows from Cauchy's inequality that
\begin{align*}
(\|w_t\|_2^{(\varepsilon)})^2(t)=\int_M\langle w_t,w_t\rangle \leq& \int_M\langle g^{\alpha\alpha}\bar{\nabla}_{\alpha}\bar{\nabla}_{\alpha}w,w_t\rangle +\int_M 2\bar{R}^{ikik}w_{ii}(w_t)_{kk} \\
&+\bar{C}M_2\delta^4(t)+\frac{1}{2}(\|w_t\|_2^{(\varepsilon)})^2(t)\\
&+ 4(\|d\|_2^{(\varepsilon)}\delta^2(t)+\|e\|_2^{(\varepsilon)}\varepsilon\delta(t)+\|f\|_2^{(\varepsilon)}\varepsilon^2+\|\bar{f}\|^{(\varepsilon)}_2\varepsilon)^2.
\end{align*}
for $t\in[t_0,t_0+(k-1)T]$. Using integration by parts and then integrating from $t=t_0$ to $t=(k-1)T$, we have
\begin{align*}
\frac{1}{2}(\|w_t\|_{L^2([t_0,t_0+(k-1)T];L^2(M))}^{(\varepsilon)})^2\leq& -(\|\bar{\nabla}w\|_2^{(\varepsilon)})^2(t_0+(k-1)T)\\
&+\int_M \bar{R}^{ikik}w_{ii}w_{kk}(t_0+(k-1)T)\\
&+\int_{t_0}^{t_0+(k-1)T}\bar{C}M_2\delta^4(s)ds\\
+ \int_{t_0}^{t_0+(k-1)T}4(\|d\|_2^{(\varepsilon)}\delta^2(s)&+\|e\|_2^{(\varepsilon)}\varepsilon\delta(s)+\|f\|_2^{(\varepsilon)}\varepsilon^2+\|\bar{f}\|^{(\varepsilon)}_2\varepsilon)^2ds.
\end{align*}
So
\begin{align}
&(\|w_t\|_{L^2([t_0,t_0+(k-1)T];L^2(M))}^{(\varepsilon)})^2+2(\|\bar{\nabla}w\|_2^{(\varepsilon)})^2(t_0+(k-1)T)\\
\leq &
4M_1^2\delta^2(t_0+(k-1)T)
+ \bar{C}\delta^3(t_0+(k-1)T)+|\lambda|^{-1}V^2\delta^2(t_0+(k-1)T)\nonumber.
\end{align}
\\

For $t\in[t_0+(k-1)T,t_0+kT]$, if we replace (5.12) by (5.18) and follow the computation to derive (5.19), we will have
\begin{align*}
&(\|w_t\|_{L^2([t_0+(k-1)T,t];L^2(M))}^{(\varepsilon)})^2+2(\|\bar{\nabla}w\|_2^{(\varepsilon)})^2(t)\\
\leq &2(\|\bar{\nabla}w\|_2^{(\varepsilon)})^2(t_0+(k-1)T)+ \int_{t_0+(k-1)T}^{t} \hat{C}\delta^2(s)ds\\
\leq &2(\|\bar{\nabla}w\|_2^{(\varepsilon)})^2(t_0+(k-1)T)+ \hat{C}T\delta^2(t)
\end{align*}
for  all $t\in[t_0+(k-1)T,t_0+kT]$ and some $\hat{C}$ depending on $M_1$ and $M_2$. By (5.19), we have
\begin{align*}
2(\|w_t\|_{L^2([t_0,t ];L^2(M))}^{(\varepsilon)})^2+(\|\bar{\nabla}w\|_2^{(\varepsilon)})^2(t)
\leq 
4M_1^2\delta^2(t)
+ \bar{C}\delta^3(t)&+|\lambda|^{-1}V^2\delta^2(t)\\&+\hat{C}T\delta^2(t)\nonumber
\end{align*}
for all $t\in [t_0+(k-1)T,t_0+kT]$.\\

Now we choose $M_2= \max\{2|\lambda|^{-\frac{1}{2}}V,8M_1\}$. Then we have
\begin{align*}
2(\|w_t\|_{L^2([t_0,t ];L^2(M))}^{(\varepsilon)})^2+(\|\bar{\nabla}w\|_2^{(\varepsilon)})^2(t)
\leq M_2^2\delta^2(t)
\end{align*}
for all $t\in [t_0,t_0+kT]$ provided $T\leq  \frac{M_2^2}{4\hat{C}}$ and $t_0+kT\leq \frac{1}{-\lambda}\log(\frac{M_2^2}{4\bar{C}})$. We use $T_1$ and $N_2$ in what follows to denote $\frac{M_2^2}{4\hat{C}}$ and $\frac{1}{-\lambda}\log(\frac{M_2^2}{4\bar{C}})$ respectively.\\

We still need to define $M_3$ and $M_4$. We consider the equation of $w_t$ which can be written as
\begin{align*}
\frac{\partial}{\partial t}(w_t)_{kk}=g^{\alpha\alpha}\bar{\nabla}_\alpha\bar{\nabla}_\alpha (w_t)_{kk}&+2\bar{R}^{i\mbox{ }i}_{\mbox{ }k\mbox{ }k}(w_t)_{ii}\\
&+ 2\delta_{1k}((g^{ll})^2\bar{\nabla}_1w_{ll}\bar{\nabla}_1(w_t)_{ll}+ c^{ii}_{\mbox{ }\mbox{ }1}\bar{\nabla}_1(w_t)_{ii})\\
&+ O(\delta)(w_t)_{kk}+ K+F_t
\end{align*}
where $\|K\|_2^{(\varepsilon)}(t)\leq \bar{C}\delta^2(t)$ for all $t\in [t_0,t_0+(k-1)T]$ and $\|F_t\|_2^{(\varepsilon)}\leq \bar{V}\delta^2(t)+\bar{V}\varepsilon$ with $\bar{V}=(1-2\lambda)V$. We can get
\begin{align*}
\frac{\partial}{\partial t}(\|w_t\|^{(\varepsilon)}_2)^2
\leq -2\lambda(\|w_t\|^{(\varepsilon)}_2)^2&+(\bar{C}+3M_3\bar{V})\delta^3(t)+M_3\bar{V}\varepsilon\delta(t)
\end{align*}
for all $t\in[t_0,t_0+(k-1)T]$.\\

 For $t\in[t_0+(k-1)T,t_0+kT]$, using the energy estimate again
\begin{align}
\|w_t\|^{(\varepsilon)}_2(t)+\|\bar{\nabla}w_t\|^{(\varepsilon)}_2(t)\leq (4C_0\bar{V}T^{\frac{1}{2}}+C_0(M_3+M_4))\delta(t),
\end{align} 
 we have 
\begin{align*}
\frac{\partial}{\partial t}(\|w_t\|^{(\varepsilon)}_2)^2
\leq &-2\lambda(\|w_t\|^{(\varepsilon)}_2)^2\\&+(\bar{C}+3(4C_0\bar{V}T^{\frac{1}{2}}+C_0(M_3+M_4))\bar{V})\delta^3(t)+M_3\bar{V}\varepsilon\delta(t).
\end{align*}

 Therefore if we set $M_3= -2\lambda \bar{V}$ and $B=\bar{C}+3M_3\bar{V}+3(4C_0\bar{V}T^{\frac{1}{2}}+C_0(M_3+M_4))\bar{V}$, 
then we can use Grownwall's inequality to get
\[
(\|w_t\|^{(\varepsilon)}_2)^2 \leq M^2_3\delta^2(t)
\]
for all $t\in[t_0,t_0+kT]$ provided $t_0+kT<\frac{1}{-\lambda}\log(\frac{-\lambda M_3^2}{4B})$. We use $N_3$ in what follows to denote $\frac{1}{-\lambda}\log(\frac{-\lambda M_3^2}{4B})$.\\

Finally, we consider the integration
\begin{align*}
(\|w_{tt}\|_2^{(\varepsilon)})^2(t)=\int_M\langle w_{tt},w_{tt}\rangle \leq& \int_M\langle g^{\alpha\alpha}\bar{\nabla}_{\alpha}\bar{\nabla}_{\alpha}w_t,w_{tt}\rangle +\int_M 2\bar{R}^{ikik}(w_t)_{ii}(w_{tt})_{kk} \\
&+\bar{C}M_4\delta^4(t)+\frac{1}{2}(\|w_t\|_2^{(\varepsilon)})^2(t)\\
&+ 4(\|F_t\|_2^{(\varepsilon)})^2(t).
\end{align*}
for $t\in[t_0,t_0+(k-1)T]$. By using the integration by parts and then integrating from $t=t_0$ to $t=(k-1)T$ we get
\begin{align*}
\frac{1}{2}(\|w_{tt}\|_{L^2([t_0,t_0+(k-1)T];L^2(M))}^{(\varepsilon)})^2\leq& -(\|\bar{\nabla}w\|_2^{(\varepsilon)})^2(t_0+(k-1)T)+(\|\bar{\nabla}w_t\|_2^{(\varepsilon)})^2(t_0)\\
&+\int_M \bar{R}^{ikik}w_{ii}w_{kk}(t_0+(k-1)T)\\
&+\int_{t_0}^{t_0+(k-1)T}\bar{C}M_4\delta^4(s)ds\\
&+ 3\bar{V}^2\delta^3(t_0+(k-1)T)+|\lambda|^{-1}\bar{V}^2\delta^2(t_0+(k-1)T).
\end{align*}
So we have
\begin{align}
&(\|w_{tt}\|_{L^2([t_0,t_0+(k-1)T];L^2(M))}^{(\varepsilon)})^2+(\|\bar{\nabla}w_t\|_2^{(\varepsilon)})^2(t_0+(k-1)T)\\
\leq &
4M_3^2\delta^2(t_0+(k-1)T)
+ \bar{D}\delta^3(t_0+(k-1)T)+|\lambda|^{-1}\bar{V}^2\delta^2(t_0+(k-1)T)\nonumber.
\end{align}
for all $t\in [t_0,t_0+(k-1)T]$.\\

 For $t\in[t_0+(k-1)T,t_0+kT]$, if we replace (5.12) by (5.20) and follow the computation to derive (5.21), we have
\begin{align*}
&(\|w_{tt}\|_{L^2([t_0+(k-1)T,t];L^2(M))}^{(\varepsilon)})^2+2(\|\bar{\nabla}w_t\|_2^{(\varepsilon)})^2(t)\\
\leq &2(\|\bar{\nabla}w_t\|_2^{(\varepsilon)})^2(t_0+(k-1)T)+ \int_{t_0+(k-1)T}^{t} \hat{D}\delta^2(s)ds\\
\leq &2(\|\bar{\nabla}w_t\|_2^{(\varepsilon)})^2(t_0+(k-1)T)+ \hat{D}T\delta^2(t)
\end{align*}
for all $t\in[t_0+(k-1)T,t_0+kT]$ and some $\hat{D}$ depending only on $M_3$ and $M_4$. By (5.21), we have
\begin{align*}
2(\|w_t\|_{L^2([t_0,t ];L^2(M))}^{(\varepsilon)})^2+(\|\bar{\nabla}w\|_2^{(\varepsilon)})^2(t)
\leq 
4M_3^2\delta^2(t)
+ \bar{D}\delta^3(t)&+|\lambda|^{-1}\bar{V}^2\delta^2(t)\nonumber\\
&+\hat{D}T\delta^2(t)\nonumber.
\end{align*}
Now we choose $M_4= \max\{2|\lambda|^{-\frac{1}{2}}\bar{V},8M_3 \}$, then we have
\begin{align*}
2(\|w_{tt}\|_{L^2([t_0,t ];L^2(M))}^{(\varepsilon)})^2+(\|\bar{\nabla}w_t\|_2^{(\varepsilon)})^2(t)
\leq M_4^2\delta^2(t)
\end{align*}
for all $t\in [t_0,t_0+kT]$ provided $T\leq \frac{M_4^2}{4\hat{D}}$ and $t_0+kT\leq \frac{1}{-\lambda}\log\frac{M_4}{4\bar{D}}$. We use $T_2$ and $N_4$ in what follows to denote $\frac{M_4^2}{4\hat{D}}$ and $\frac{1}{-\lambda}\log\frac{M_4}{4\bar{D}}$ respectively.\\

We set $T:=\min\{T_1,T_2\}$ and $\bar{N}$ be any constant smaller than $\min\{N_1,N_2,N_3,N_4\}$ and $\min_{i=1,2,3,4}\{\frac{1}{-\lambda}\log(M_i^{-1})\}$.\\

Therefore we have completed our proof.
\end{proof}
\begin{remark}
For the sup norm estimate of $|\bar{\nabla}^{(3)}w|$, we can follow the first derivative estimate in [2] again. Therefore the inequality (5.9) can be extended if our solution is solvable on each $[t_0+(k-1)T,t_0+kT]$ for all $k\leq \frac{1}{T}(\bar{N}-t_0)$.
\end{remark}

\subsection{Long-time existence}
Here is the trick we use to prove the long-time existence: first we have the short time existence of our solution on $[t_0,t_0+T]$. We apply lemma 5.16 and proposition 4.13 inductively on $[t_0+kT,t_0+(k+1)T]$. Then we use the argument of short-time existence to extend our solution.
\begin{theorem} Let $\iota$ be the constant given by theorem 5.12. There exists a universal $N>0$ such that for any $\varepsilon\leq\iota$, the Ricci-de Turck equation
\begin{align}
\frac{\partial}{\partial t}g_{ij}&=-2R_{ij}+\nabla_i V_j^{(\varepsilon)}+\nabla_j V_i^{(\varepsilon)}\\
g(x,0)&=(g_0+\varepsilon h)\nonumber
\end{align}
with $V_i^{(\varepsilon)}=g_{ik}g^{pl}(\Gamma^k_{pl}-\Gamma_{pl}^{k(\varepsilon)})$ is solvable for $t\in [t_0,N]$ where $t_0:=\frac{1}{-\lambda}\log(\varepsilon)$.
\end{theorem}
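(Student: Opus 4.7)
The plan is to construct $g$ on $[t_0, N]$ for some universal $N < \bar{N}$ by concatenating short Ricci--de Turck flows of length $T$, where $T$ and $\bar{N}$ are the universal constants produced by Lemma 5.16. At each stage I will simultaneously extend the flow by $T$ using the Shi-type short-time existence (Theorem 4.9) and then promote the estimates (5.12) over the enlarged time interval via Lemma 5.16.

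For the base case, the initial metric $\bar{g} = g_0 + \varepsilon h$ at time $t_0$ satisfies $|Rm(\bar{g})| \leq 2|Rm(g_0)| \leq k_0$ by Remark 4.12, for a universal constant $k_0$. Theorem 4.9 then provides a smooth Ricci--de Turck solution on $[t_0, t_0 + T]$, and since $w(t_0) = 0$, Theorem 5.12 together with Theorem 5.14 yield the bounds (5.12) on $[t_0, t_0 + T]$ (shrinking $T$ if needed so that $T \leq T(n, k_0)$).

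For the inductive step, assume $g$ has been constructed on $[t_0, t_0 + kT]$ with (5.12) holding, and $t_0 + (k+1)T \leq \bar{N}$. From (5.12) and Proposition 4.2 (with its higher-derivative variant used in Theorem 5.14), the sup-norm bounds $|\bar{\nabla}^{(m)} w|_0 \leq C\delta(t)$ hold for $m = 0,1,2,3$ on the interval. Writing $g = g_0 + \delta(t) h + w$ and using Corollary 4.4 to bound $|\bar{\nabla}^{(m)}(\delta(t) h)|_0 \leq C\delta(t)$, I conclude that $|g - g_0|_0$ and its derivatives up to order three are bounded by $C\delta(t) \leq C e^{-\lambda \bar{N}}$, a universal constant. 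Proposition 4.13 then supplies a universal curvature bound $|Rm(g(t_0 + kT))| \leq k_0$ (after shrinking $\iota$ if necessary so that the smallness hypothesis of Proposition 4.13 is met). Applying Theorem 4.9 with initial data $g(t_0 + kT)$ extends $g$ to $[t_0 + kT, t_0 + (k+1)T]$ with the same universal time step $T$, and Lemma 5.16 (applied with $k$ replaced by $k+1$) promotes (5.12) to the full interval $[t_0, t_0 + (k+1)T]$. Iterating over all $k$ with $t_0 + (k+1)T \leq \bar{N}$ yields existence on $[t_0, N]$ for any $N < \bar{N}$.

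The main obstacle is ensuring that the Shi-type time step $T$ can be chosen uniformly in $k$, so that the induction reaches $\bar{N}$ in finitely many steps rather than stalling. This is exactly the role played by Proposition 4.13 at each stage: it converts the uniform-in-$k$ $W^{1,2}$ and derivative bounds on $w$ supplied by Lemma 5.16 into a uniform curvature bound on $g(t_0 + kT)$, which in turn feeds back into Theorem 4.9 to keep the short-time existence interval constant. Once this uniform curvature control is in place, the remainder is a routine finite iteration.
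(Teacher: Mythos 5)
Your inductive scheme (uniform curvature bound via Proposition 4.13, uniform time step via the Shi-type short-time result, promotion of the estimates (5.12) via Lemma 5.16) is the same architecture as the paper, and the base case is handled correctly. But the key extension step contains a genuine gap that the paper explicitly flags and you overlook: you propose to ``apply Theorem 4.9 with initial data $g(t_0+kT)$'' to extend the flow. In Theorem 4.9, the de Turck vector field $V_i = g_{ik}g^{pl}(\Gamma^k_{pl} - \bar{\Gamma}^k_{pl})$ is built from the Christoffel symbols $\bar{\Gamma}$ of the \emph{initial} metric. If you feed it the initial metric $g(t_0+kT)$, Shi's theorem produces a Ricci--de Turck flow whose gauge-fixing term uses $\Gamma(g(t_0+kT))$, not $\Gamma^{(\varepsilon)} = \Gamma(g_0 + \varepsilon h)$. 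That is a different parabolic system from the one in the statement you are proving, and it is also different from the one that equation (5.2) (and hence Lemma 5.16) governs, so the estimates do not transfer. The reference metric for the de Turck term must remain $g_0 + \varepsilon h$ throughout $[t_0, N]$.

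The paper circumvents precisely this obstruction: instead of invoking Theorem 4.9 at the restart time $t_1 = t_0 + T$, it invokes the pure Ricci flow existence result (Theorem 4.8) with initial data $g(t_1)$ to produce $\hat{g}$ on $[t_1, t_1+T]$, and then solves a first-order ODE/PDE for a family of diffeomorphisms whose pull-back of $\hat{g}$ is a Ricci--de Turck flow \emph{with the original reference metric} $g_0 + \varepsilon h$; the solvability of that diffeomorphism system requires the boundedness of $|\hat{g}|$ and $|\nabla\hat{g}|$ from Shi's estimates. This detour is the essential content of the extension step. To repair your proof, you would either need to carry out this Ricci-flow-plus-diffeomorphism construction, or else prove a strengthened short-time existence theorem for the Ricci--de Turck system in which the de Turck reference metric is allowed to differ from the initial metric (under suitable closeness hypotheses); as written, your step does not establish existence for the correct equation.
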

\begin{proof} Since we can write $g=g_0+\delta(t)h+w$ with $w$ satisfies equation (5.2), we only need to prove $w$ exists for $t\in[t_0,N]$.\\

 By proposition 4.13, if we set $k_0=3|Rm(g_0)|$, we can apply theorem 5.15 and remark 4.12 to obtain 
\begin{align*}
|Rm(g)|&\leq |Rm(g_0)|+|Rm(g_0+\delta(t) h)-Rm(g_0)|+|Rm(g)-Rm(g_0+\delta(t) h)|\\
&\leq|Rm(g_0)|+C\delta(t)
\end{align*}
for some constant $C$.
This implies that we can define $A:=\min\{\frac{1}{-\lambda}\log(\bar{N}),\frac{|Rm(g_0)|}{2C}\}$, such that
\[
|Rm(g)|\leq \frac{1}{2}k_0
\] 
provided $\delta(t) \leq A$.\\

Let $T$ be the constant given by lemma 5.16. By theorem 4.8, if we have
\begin{align*}
t_0+T &\leq  \frac{1}{-\lambda}\log(A):=N,
\end{align*}
 we can solve the Ricci flow $\hat{g}$ satisfies
\begin{align*}
&\frac{\partial\hat{g}}{\partial t}=-2Ric({\hat{g}});\\
&\hat{g}(t_1)=g(t_1).
\end{align*}
on $(x,t)\in M\times [t_1,t_1+T]$ with $t_1:=t_0+T$. Now we can extend the Ricci-de Turck flow on $[t_1,t_1+T]$, too. Here we need to be careful: the Ricci-de Turck flow we want to extend is the one starts from $t_0$, not the Ricci-de Turck flow starts from $t_1$. So our diffeomorphism will be
\begin{align*}
&\frac{\partial x^\alpha}{\partial t} =\frac{\partial x^\alpha}{\partial y^k}\hat{g}^{jl}(\Gamma_{jl}^k-\Gamma_{jl}^{k(\varepsilon)});\\
&x^\alpha(y,t_1)=y^\alpha
\end{align*}
which is solvable on the same interval of short time existence of the Ricci flow (Here we need the boundedness of $|\hat{g}|$ and $|\nabla \hat{g}|$, which have been proved in [1]). This implies that we can extend the Ricci-de Turck flow for a period of time $T$. By lemma 5.16, we can prove inductively that $g$ is solvable on $[t_0+(k-1)T,t_0+kT]$ for all $k\leq \frac{1}{T}(N-t_0)$.\\

Hence, we have proved the long time existence theorem.
\end{proof}
\begin{cor} Let $\iota$ be the constant given by theorem 5.12 and $N$ be the constant given by theorem 5.18. There is an universal constant $C>0$ such that if $w$ is the solution of equation (5.6) with $\varepsilon\leq \iota$, then
\begin{align}
\|w\|^{(0)}_{2}(t), \|\bar{\nabla}w\|^{(0)}_{2}(t) \leq C \delta(t);\\
|w|_0(t), |\bar{\nabla}w|_0 (t), |\bar{\nabla}\bar{\nabla}w|_0(t) \mbox{ and }|\bar{\nabla}^{(3)}w|_0(t)\leq C\delta(t)
\end{align}
for all $t\in[t_0,N]$.
\end{cor}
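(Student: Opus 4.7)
The plan is to combine Theorem 5.18's long-time existence of $g$ on $[t_0, N]$ with the closed inductive estimate of Lemma 5.16, iterating across a partition of $[t_0, N]$ into intervals of length $T$, and then to convert the resulting $L^2$ control on $w$, $\bar{\nabla}w$, $w_t$, $\bar{\nabla}w_t$ into the stated sup-norm control via Proposition 4.2 and the first-derivative parabolic estimate invoked in Theorem 5.15 and Remark 5.17. Norm equivalence between $\|\cdot\|^{(0)}$ and $\|\cdot\|^{(\varepsilon)}$ (established in the proof of Theorem 5.12 from $|g-\bar g|_\varepsilon \leq 1/4$) lets me work entirely with the $\varepsilon$-norm and convert at the end.

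First I would settle the base case of the induction on $[t_0, t_0+T]$. Since $\delta(t_0) = e^{-\lambda t_0} = \varepsilon$, the initial condition $g(x,t_0) = g_0 + \varepsilon h = \bar g$ forces $v(t_0) = 0$ and hence $w(t_0) = v(t_0) - (\delta(t_0)-\varepsilon)h = 0$. Theorem 5.12 then yields
\[
\|w\|_2^{(\varepsilon)}(t) + \|\bar{\nabla}w\|_2^{(\varepsilon)}(t) \leq C_0 T^{1/2}(\varepsilon + \varepsilon\delta(t) + \delta(t)^2 + \varepsilon^2) \leq C\delta(t)
\]
on $[t_0, t_0+T]$, using $\varepsilon = \delta(t_0) \leq \delta(t)$ and $\delta(t) \leq A \leq 1$. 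The same energy argument applied to the equation for $w_t$ (as in the proof of Theorem 5.12) supplies the companion bounds $\|w_t\|_2^{(\varepsilon)}(t), \|\bar{\nabla}w_t\|_2^{(\varepsilon)}(t) \leq C\delta(t)$. Fixing the universal constants $M_1, M_2, M_3, M_4$ at least this large, the hypothesis (5.12) of Lemma 5.16 is satisfied on $[t_0, t_0+T]$.

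Second, I would iterate. Assume (5.12) on $[t_0, t_0+(k-1)T]$ with $t_0+kT \leq N$. Theorem 5.18 guarantees that $g$ is defined on $[t_0, t_0+kT]$, and since $N \leq \bar N$ the second hypothesis of Lemma 5.16 is met, so the lemma upgrades (5.12) to $[t_0, t_0+kT]$ \emph{with the same constants $M_i$}. After at most $\lceil (N-t_0)/T \rceil$ steps the bounds hold on all of $[t_0, N]$; norm equivalence then yields (5.22).

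For (5.23), I would apply Proposition 4.2 to $w$ and to $\bar{\nabla}w$. This requires $L^2$ control on one extra derivative in each case: the bound $\|\bar{\nabla}\bar{\nabla}w\|_2^{(\varepsilon)} \leq C\delta(t)$ comes from solving the PDE (5.8) algebraically for $g^{\alpha\alpha}\bar{\nabla}_\alpha\bar{\nabla}_\alpha w$ in terms of the already-controlled $w, \bar{\nabla}w, w_t$ and the forcing; differentiating (5.8) once and using $\bar{\nabla}w_t \in L^2$ similarly controls $\|\bar{\nabla}^{(3)}w\|_2^{(\varepsilon)}$. The sup-norm estimate for $|\bar{\nabla}^{(3)}w|_0$ then follows as in Remark 5.17, via the first-derivative parabolic estimate (Chapter VII, Theorem 6.1 of [2]) applied to $\partial_t\bar{\nabla}w = \bar{\nabla}Lw + \bar{\nabla}F$ together with the already-established sup-norm bounds on $w, \bar{\nabla}w, \bar{\nabla}\bar{\nabla}w, w_t$. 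The main obstacle is essentially bookkeeping: one must verify that the constants $M_1,\dots,M_4$ produced by Lemma 5.16 can be fixed once, independently of the iteration index $k$, so that the induction genuinely closes on $[t_0, N]$ rather than accumulating errors — but this uniformity is precisely the content of Lemma 5.16.
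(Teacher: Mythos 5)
Your assembly of the preceding results is exactly what the paper intends: Corollary 5.19 is stated without an explicit proof, with the understanding that one combines the short-time energy estimate (Theorem 5.12), the improved regularity (Theorem 5.15 and Remark 5.17), the closed inductive bound (Lemma 5.16), and the long-time existence (Theorem 5.18) across the partition of $[t_0,N]$, then converts norms using the equivalence from $|g-\bar g|_\varepsilon \leq 1/4$. One small slip worth noting: the constants $M_1,\dots,M_4$ are \emph{fixed} by Lemma 5.16 and cannot be chosen "at least this large" to match Theorem 5.12's output; however this does not affect the argument, because the induction starts at $k=1$, where hypothesis (5.12) is required only at the single time $t=t_0$, and there it holds trivially since $\delta(t_0)=\varepsilon$ forces $w(t_0)=v(t_0)-(\delta(t_0)-\varepsilon)h=0$, so no comparison between the Theorem 5.12 constant and the $M_i$ is ever needed.
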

Recall our definition of estimate cones. We have
\begin{cor}
Let $\iota$ be the constant given by theorem 5.12 and $N$ be the constant given by theorem 5.18. There is an universal constant $M>0$ such that if $g$ is the solution of equation (5.22) with $\varepsilon\leq \iota$, then $g\in C_{h,M}(g_0)$.
\end{cor}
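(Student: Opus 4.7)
The plan is to take $\delta = \delta(t)$ in the definition of the cone $C_{h,M}(g_0)$ and appeal directly to Corollary 5.19. Recall from Section 5.3 that $w(x,t) = g(x,t) - \bar{g}(x) - (\delta(t)-\varepsilon)h(x)$, and since $\bar{g} = g_0 + \varepsilon h$ this gives the clean decomposition
\[
g(x,t) - g_0(x) = \delta(t)\,h(x) + w(x,t).
\]
Therefore, bounding the cone-quotient reduces to estimating $\|w\|_{W^{1,2}}/\delta(t)$; the candidate opening $M$ will be essentially the constant $C$ appearing in Corollary 5.19.

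Concretely, I would proceed as follows. By Corollary 5.19, one has $\|w\|_2^{(0)}(t) \le C\delta(t)$ and $\|\bar\nabla w\|_2^{(0)}(t) \le C\delta(t)$ uniformly for $t \in [t_0,N]$. The norm appearing in the definition of $\mathbb{H}$ is $\|\cdot\|_{W^{1,2}} = \|\cdot\|_2 + \|\dot\nabla\cdot\|_2$, with the covariant derivative taken with respect to $\dot g = g_0$ rather than $\bar g$, so I must convert between the two. Since $\bar g - \dot g = \varepsilon h$ and the connection difference $\bar\Gamma - \dot\Gamma$ is a tensorial expression linear in $\dot\nabla(\varepsilon h)$, Corollary 4.4 bounds $|\bar\Gamma - \dot\Gamma|_0 \le C\varepsilon$, hence
\[
|\dot\nabla w - \bar\nabla w|_0 \le C\varepsilon |w|_0,
\]
which, after taking the $L^2$-norm and using $\varepsilon \le \iota$, yields $\|\dot\nabla w\|_2^{(0)}(t) \le (1+C\iota)\,C\delta(t)$. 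Combining, $\|w\|_{W^{1,2}}(t) \le M\delta(t)$ for a universal constant $M$.

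To finish, I must check that $g-g_0$ actually lies in the Banach space $\mathbb{H}$ where the cone is defined. Radial symmetry and diagonality are guaranteed by Remark 4.10 (for $g$) and the fact that $g_0$ itself is diagonal and radially symmetric, so $g - g_0 = \delta(t)h + w$ inherits both properties. The decay needed to approximate $g-g_0$ by compactly supported tensors in the $W^{1,2}$-norm follows from Propositions 4.5 and 4.6 for $h$, and from Lemma 5.9 together with the argument of Section 5.4 for $w$. Taking $\delta = \delta(t)$ in Definition 5.1 then gives
\[
\inf_{\delta \ge 0} \frac{\|(g-g_0) - \delta h\|_{W^{1,2}}}{\delta} \le \frac{\|w\|_{W^{1,2}}}{\delta(t)} \le M,
\]
so $g \in C_{h,M}(g_0)$. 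There is no serious obstacle here — the entire content has been pre-packaged by Corollary 5.19 — the only point requiring care is the comparison of the $\dot\nabla$- and $\bar\nabla$-norms, which is routine provided $\iota$ was chosen small enough in Theorem 5.12.
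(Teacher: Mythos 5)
Your argument is correct and follows the paper's own (one-line) proof, which observes that Corollary 5.19 bounds $\|w\|/\delta(t)$ and then simply plugs $\delta = \delta(t)$ into the cone definition using the decomposition $g - g_0 = \delta(t)h + w$. The only extra content you supply — the comparison of $\bar\nabla$- and $\dot\nabla$-norms via $|\bar\Gamma - \dot\Gamma|_0 \le C\varepsilon$ — is a detail the paper leaves implicit under its earlier assertion (in the proof of Theorem 5.12) that the $\dot g$-, $\bar g$-, and $g$-norms are equivalent; spelling it out is fine but does not change the route.
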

\begin{proof} By corollary 5.19, we can choose this cone with its opening depending on $C$.
\end{proof}

\section{Existence of Ancient Solutions}
Finally, we can prove the existence of the ancient solution. We start with finding the ancient solution of the Ricci-de Turck flow. Then we prove the existence of the de Turck diffeomorphisms.
\subsection{Existence of ancient Ricci-de Turck flow}
\begin{theorem}
Let $(M,\dot{g})$ be the Euclidean Schwarzschild manifold. Then there exists a constant $N>0$ such that the Ricci-de Turck equation
\begin{align*}
&\frac{\partial}{\partial t}g_{ij}(x,t)=-2R_{ij}(x,t)+\nabla_iV_j+\nabla_jV_i\mbox{  }x\in M;\\
&g_{ij}(x,t)\rightarrow \dot{g}(x) \mbox{ uniformly as }t\rightarrow -\infty
\end{align*}
has a solution on $(-\infty, N]$, where $V_i:=g_{ik}g^{pl}(\Gamma^k_{pl}-\dot{\Gamma}^k_{pl})$.
\end{theorem}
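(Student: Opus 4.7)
The plan is to construct $g$ as a limit of the finite-time Ricci--de Turck flows produced by Theorem 5.18 as the initial time is sent to $-\infty$. Concretely, choose a sequence $\varepsilon_n \downarrow 0$ with $\varepsilon_n \leq \iota$, set $t_n := \frac{1}{-\lambda}\log(\varepsilon_n) \to -\infty$, and let $g_n$ denote the Ricci--de Turck solution given by Theorem 5.18 with $g_n(x,t_n) = g_0 + \varepsilon_n h$. Each $g_n$ is defined on $M \times [t_n, N]$ for the universal $N$ of Theorem 5.18, and decomposes as $g_n = g_0 + \delta(t) h + w_n$ where $w_n$ satisfies the parabolic system (5.8).

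Next I would invoke the uniform control from Corollary 5.19: on the common time interval $[t_n, N]$ we have $|w_n|_0, |\bar{\nabla} w_n|_0, |\bar{\nabla}\bar{\nabla} w_n|_0, |\bar{\nabla}^{(3)} w_n|_0 \leq C\delta(t)$ with $C$ independent of $n$. In particular, on any fixed $[S,N] \subset (-\infty, N]$ these bounds are uniform, and combining them with Shi's derivative estimates (Theorem 5.5, applied to the shifted flow after enough time has passed so that $|Rm(g_n)| \leq k_0$ by the argument of Theorem 5.18) gives uniform $C^k$ bounds for $g_n$ on every compact subset of $M \times [S,N]$ and every $k$. A standard Arzel\`a--Ascoli plus diagonal extraction then produces a subsequence (still called $g_n$) converging in $C^\infty_{\mathrm{loc}}$ on $M \times (-\infty, N]$ to a smooth metric $g$; passing to the limit in the Ricci--de Turck equation shows $g$ solves that equation on $M \times (-\infty, N]$.

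It remains to establish the $t \to -\infty$ boundary behavior. Writing $g = g_0 + \delta(t) h + w$ with $w$ the $C^\infty_{\mathrm{loc}}$-limit of $w_n$, the uniform bound $|w_n|_0 \leq C\delta(t)$ passes to the limit to give $|w|_0(t) \leq C\delta(t)$. Since $\delta(t) = e^{-\lambda t} \to 0$ as $t \to -\infty$ and $|h|_0$ is bounded by Corollary 4.4, we get $|g - g_0|_0(t) \leq |w|_0(t) + \delta(t)|h|_0 \to 0$ uniformly on $M$, which is precisely the required convergence $g(x,t) \to g_0(x)$.

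The main obstacle I anticipate is the compactness step on the noncompact manifold $M$: one must ensure that the uniform estimates of Corollary 5.19 combined with Shi's bounds are genuinely strong enough (including near the tips $p=0$ and $p=1$, and as $r\to\infty$) to extract a $C^\infty_{\mathrm{loc}}$ limit, and that the limiting $w$ retains the decay $|w|_0 \leq C\delta(t)$ after passing to a subsequence. The vanishing results (Proposition 4.6, Lemma 5.10) and the weighted $\mathbb{H}^1$ bounds feeding into Proposition 4.2 are precisely what handle the behaviour near $p=0$ and $p=1$, while the $\delta(t)$-decay of $w_n$ together with the fall-off of $h$ at infinity controls the behaviour as $r\to\infty$; the diagonal selection then produces a limit defined on all of $(-\infty, N]$.
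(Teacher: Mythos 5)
Your overall strategy — taking the finite-time Ricci--de Turck solutions $g^{(\varepsilon_n)}$ from Theorem 5.18 with initial times $t_n=\frac{1}{-\lambda}\log\varepsilon_n\to-\infty$, and extracting a limit via uniform estimates plus a diagonal argument over an exhaustion of spacetime — is the same as the paper's. The mechanism of the compactness step differs. You propose to obtain uniform $C^k$ bounds for all $k$ by combining Corollary 5.19 with Shi-type higher-order derivative estimates and then apply Arzel\`a--Ascoli; note, however, that Theorem 5.5 only bounds $|\bar{\nabla}g|$ and $|\partial_t g|$, so the reference you actually need for the all-order bounds is Proposition 7.2 (or Lemma 7.1) in the appendix, together with the observation that $t-t_n\to\infty$ on a fixed compact time window so those bounds are uniform in $n$. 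The paper instead rewrites the equation in quasilinear form $\partial_t g_{ij}=\bar\nabla_\alpha g^{\alpha\alpha}\bar\nabla_\alpha g_{ij}+B_{ij}\bar\nabla g_{ij}+C_{ij}$, uses the $C^1$-level bounds of Corollaries 4.4 and 5.19 to extract uniform convergence of $g^{(\varepsilon_{n_l})}$ and $\bar\nabla g^{(\varepsilon_{n_l})}$ on the compact regions $P_i$, passes to a weak $L^2(P_i)$ limit (using Corollary 5.20 and inequality (5.24)), identifies the limit as a weak solution, and upgrades by parabolic regularity. Both routes are viable; yours is more direct once the higher-order Shi estimates are in hand, while the paper's avoids establishing uniform bounds at every derivative order up front.

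The one substantive omission in your proposal is the nontriviality of the limit. The theorem as literally stated is satisfied by the constant $g\equiv\dot g$ (which is Ricci-flat, has $V_i=0$, and trivially converges to $\dot g$), and nothing in your extraction rules out that the subsequential limit collapses to $\dot g$: the bound $|w|_0\le C\delta(t)$ you propagate is also satisfied by $w=-\delta(t)h$ (i.e.\ $g\equiv g_0$), since $|w|_0=\delta(t)|h|_0\le C_1\delta(t)$. The paper explicitly addresses this at the end of its proof: using Remark 5.13 it shows that for $k\neq 1$ the components $w^{(\varepsilon_n)}_{kk}$ satisfy a single uniformly parabolic equation with $O(\delta^2)$ forcing, and an energy estimate yields $|w^{(\varepsilon_n)}_{kk}|\le C\dot g_{kk}\delta^2(t)$, hence $g^{(\varepsilon_n)}_{kk}=\dot g_{kk}+\delta(t)h_{kk}+O(\delta^2)\neq\dot g_{kk}$, which persists in the limit. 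Without this step (or some equivalent lower bound separating the limit from $\dot g$), your construction supports the theorem's literal statement but not its intended content — namely, the ancient flow feeding into Main Theorem 1.1, which must genuinely move away from $g_0$.
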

\begin{proof}
Let $\{\varepsilon_n\}$ be a decreasing sequence which tends to 0. For each $n\in \mathbb{N}$, we have a corresponding solution $g^{(\varepsilon_n)}$ which satisfies
\begin{align*}
\frac{\partial}{\partial t}g_{ij}&=-2R_{ij}+\nabla_i V_j^{(\varepsilon_n)}+\nabla_j V_i^{(\varepsilon_n)}\\
g(t_n)&=(g_0+\varepsilon_n h); \mbox{ }t_n:=\frac{\log \varepsilon_n}{-\lambda}
\end{align*}
where $V_i^{(\varepsilon_n)}=g_{ik}g^{pl}(\Gamma^k_{pl}-\Gamma_{pl}^{k(\varepsilon_n)})$.\\

We can rewrite the equation $\frac{\partial}{\partial t}g_{ij}=-2R_{ij}+\nabla_i V_j^{(\varepsilon_n)}+\nabla_j V_i^{(\varepsilon_n)}$ as
\[
\frac{\partial}{\partial t} g_{ij}= \bar{\nabla}_\alpha g^{\alpha\alpha}\bar{\nabla}_\alpha g_{ij} + B_{ij}(\bar{\nabla}g, g) \bar{\nabla} g_{ij}+ C_{ij}(\bar{\nabla}g, g).
\]
for some smooth functions $B_{ij}$ and $C_{ij}$. Therefore we can regarded $g^{(\varepsilon_n)}$ as a solution of
\begin{align*}
\frac{\partial}{\partial t} g_{ij}&= \bar{\nabla}_\alpha(g^{(\varepsilon_n)})^{\alpha\alpha}\bar{\nabla}_\alpha g_{ij} + B_{ij}(\bar{\nabla}g^{(\varepsilon_n)}, g^{(\varepsilon_n)}) \bar{\nabla} g_{ij}+ C_{ij}(\bar{\nabla}g^{(\varepsilon_n)}, g^{(\varepsilon_n)})\\
g(t_n)&=(g_0+\varepsilon_n h); \mbox{ }t_n:=\frac{\log \varepsilon_n}{-\lambda}.
\end{align*}

 Now we choose a sequence of compact subset $D_i\subset M$ with $\cup_{i\in \mathbb{N}} D_i=M$. We then have a sequence of subsets in the space-time $\{P_i:=D_i\times(t_i, N]\subset M\times (-\infty,N]| i\in \mathbb{N}\}$. By corollary 4.4 and corollary 5.19, we can find a subsequence $\{g^{(\varepsilon_{n_l})}\}$ such that $g^{(\varepsilon_{n_l})}$ and $\bar{\nabla}g^{(\varepsilon_{n_l})}$ converge uniformly on $P_1$. By standard diagonal process, we have a subsequence, say $\{g^{(\varepsilon_{n_l})}\}$ again, such that $g^{(\varepsilon_{n_l})}$ and $\bar{\nabla}g^{(\varepsilon_{n_l})}$ converge uniformly on $P_i$ for each $i\in \mathbb{N}$.\\

 Let $g^{(\varepsilon_{n_l})}\rightarrow g^{(\infty)}$ as $l\rightarrow \infty$. Therefore $B_{ij}(\bar{\nabla}g^{(\varepsilon_{n_l})}, g^{(\varepsilon_{n_l})})$ and $C_{ij}(\bar{\nabla}g^{(\varepsilon_{n_l})}, g^{(\varepsilon_{n_l})})$ will converge uniformly to $B_{ij}(\bar{\nabla}g^{(\infty)}, g^{(\infty)})$ and $C_{ij}(\bar{\nabla}g^{(\infty)}, g^{(\infty)})$ respectively on $P_i$ for each $i\in \mathbb{N}$.\\

  Moreover, by corollary 5.20, all $g^{(\varepsilon_n)}$ belong to the estimate cone $C_{h,M}(g_0)$. Combining with inequality (5.24), we have that $g^{(\varepsilon_n)}$ are bounded $L^2(P_1)$. Therefore we will have a convergent subsequence converge weakly in $P_1$. By standard diagonal process, we will get a subsequence
\[
g^{(\varepsilon_{n_l})}\rightharpoonup g^{(\infty)},
\]
on $P_k$ for all $k$. By the definition of weak limit, we conclude that $g^{(\infty)}$ is a weak solution of equation
\[
\frac{\partial}{\partial t} g_{ij}= \bar{\nabla}_\alpha(g^{(\infty)})^{\alpha\alpha}\bar{\nabla}_\alpha g_{ij} + B_{ij}(\bar{\nabla}g^{(\infty)}, g^{(\infty)}) \bar{\nabla} g_{ij}+ C_{ij}(\bar{\nabla}g^{(\infty)}, g^{(\infty)}).
\]
By the regularity theorem of parabolic PDE's, it is an ancient Ricci-de Turck flow, too.\\

Finally, we should prove the this solution is not a trivial, i.e. $g\neq \dot{g}$.\\

We write $g^{(\varepsilon_n)}=\dot{g}+\delta(t)h+w^{(\varepsilon_n)}$. Using remark 5.13,  $w^{(\varepsilon_n)}_{kk}$ will satisfy a equation of the form $\frac{\partial}{\partial t}w^{(\varepsilon_n)}_{kk}=L(w^{(\varepsilon_n)}_{kk})+O(\delta^2)$ with some elliptic operator $L$ and for all $t\in [t_n, N]$ and $k\neq 1$. We change the coordinate by taking $s$ as defined by (5.4). Then $L$ will be uniformly parabolic. In this case, we can fix this coordinate and applying the energy estimate of single equation to get 
\[
|w^{(\varepsilon_n)}_{kk}|\leq C\dot{g}_{kk}\delta^2(t)
\]
for all $t\in [t_n,N]$ and $k\neq 1$. So we have $\dot{g}_{kk}+\delta h_{kk} + w^{(\varepsilon_n)}_{kk}=\dot{g}_{kk}+\delta h_{kk}+O(\delta^2)$ for all $k\neq 1$ which is not equal to $\dot{g}_{kk}$ for all $t$.\\

Therefore we have completed this proof.
\end{proof}

\subsection{Solvablity of de Turck diffeomorphisms} Finally, we need to show that the 1st order PDE of the de Turck deffeomorphism can be solve. Recall that in section 3, we define the following PDE
\begin{align*}
&\frac{\partial y^\alpha}{\partial t}=\frac{\partial y^\alpha}{\partial x^k}g^{jl}(\Gamma^k_{jl}-\dot{\Gamma}^k_{jl})\\
&y^\alpha(x,-\infty)=x^\alpha
\end{align*}
Since our metric is radially symmetric, our equation can be reduced to a single equation
\begin{align*}
&\frac{\partial y^1}{\partial t}=\frac{\partial y^1}{\partial x^1}g^{jj}(\Gamma^1_{jj}-\dot{\Gamma}^1_{jj})\\
&y^1(x,-\infty)=x^1
\end{align*}
If we change our variable by defining $\delta(t)$ to be $e^{-\lambda t}$, we will have a new equation
\begin{align*}
&(-\lambda \delta)\frac{\partial y^1}{\partial \delta}=\frac{\partial y^1}{\partial x^1}g^{jj}(\Gamma^1_{jj}-\dot{\Gamma}^1_{jj})\\
&y^1(x,0)=x^1,
\end{align*}
or we can say
\begin{align}
&\frac{\partial y^1}{\partial \delta}=[\frac{1}{-\lambda \delta}g^{jj}(\Gamma^1_{jj}-\dot{\Gamma}^1_{jj})]\frac{\partial y^1}{\partial x^1}\\
&y^1(x,0)=x^1.
\end{align}

Now, by corollary 4.4 and corollary 5.19, we have $|\frac{1}{\delta}g^{jj}(\Gamma^1_{jj}-\dot{\Gamma}^1_{jj})|$ and its derivative are uniformly bounded on $\delta\in (0,e^{-\lambda N}]$. Therefore by standard first order PDE theory, we can find the characteristic curves and solve this equation of $\delta\in[0, e^{-\lambda N}]$.\\

Therefore, we finish our argument and prove theorem 1.1.

\section{Appendix}
Here we follow the notations of section 5.4 and prove lemma 5.9 

\subsection{ Higher derivative estimates} Here we prove the higher derivative estimates of $g$. This estimate is essential for the proof of lemma 5.9.\\

Let $g$ be the solution of the Ricci-de Turck equation
\begin{align}
&\frac{\partial}{\partial t}g_{ij}(x,t)=-2R_{ij}(x,t)+\nabla_iV_j+\nabla_jV_i\mbox{  }\mbox{ for all }(x,t)\in M\times[0,T];\\
&g_{ij}(x,0)=\bar{g}(x) \mbox{ }\mbox{ for all }x\in M\nonumber
\end{align}
with $|Rm(\bar{g})|\leq k_0$ for some $T>0$. By using de Turck method, we can find a family of diffeomorphisms $\varphi_{t}:M\rightarrow M$ such that $\hat{g}=(\varphi_t^{-1})^*(\bar{g})$ is a solution of Ricci flow equation.\\

In [1], Shi proved the following higher derivative estimate: for any $m\in \mathbb{N}$, there exist
constants $C_m$ depending on $m$ and $k_0$ such that
\begin{align}
|\hat{\nabla}^{(m)} Rm(\hat{g}) |\leq \frac{C_m}{t^{\frac{m}{2}}}.
\end{align}
In fact, Shi proved the following local estimate:
\begin{lemma}
Let $g$ be a Ricci flow defined on $M\times [0,T]$ for some $T>0$. There exist constants $\theta$ and $C_l$, $l\in \mathbb{N}$ depending only on the dimension of $M$ such that if 
\begin{align*}
|Rm|\leq K \mbox{ on } B_r(p)\times[0,\frac{\theta}{K}]
\end{align*}
for some $K\geq\frac{\theta}{T}$ where $B_r(p)$ is the geodesic ball centered at $p$ with radius $r$ with respect to the metric at $t=0$, then we have
\begin{align}
|\nabla^l Rm(p,t)|^2\leq C_lK^2(\frac{1}{r^{2l}}+\frac{1}{t^l}+K^l)
\end{align} 
for all $l\in\mathbb{N}$ and $(p,t)\times[0,\frac{\theta}{K}]$.
\end{lemma}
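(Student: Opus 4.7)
The plan is to prove the lemma by induction on $l$ using the Bernstein-Bando-Shi technique: derive evolution inequalities for $|\nabla^l Rm|^2$, combine them into a single auxiliary quantity whose evolution is controlled, introduce a spatial cutoff so that the maximum principle can be run on the compact cylinder $B_r(p)\times[0,\theta/K]$, and read off the pointwise bound at the center.

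First I would record the standard evolution equations under the Ricci flow. A direct computation using $\partial_t g_{ij}=-2R_{ij}$ and the Bianchi identities gives
\begin{equation*}
\partial_t |Rm|^2 \leq \Delta |Rm|^2 - 2|\nabla Rm|^2 + C_0 |Rm|^3
\end{equation*}
and, more generally,
\begin{equation*}
\partial_t |\nabla^l Rm|^2 \leq \Delta |\nabla^l Rm|^2 - 2|\nabla^{l+1} Rm|^2 + C_l \!\!\sum_{i+j=l}\!\! |\nabla^i Rm|\,|\nabla^j Rm|\,|\nabla^l Rm|.
\end{equation*}
Before using these, I would need to verify that the assumption $K\geq \theta/T$ with $\theta$ small forces the metrics $g(0)$ and $g(t)$, and their associated distance functions and Laplacians, to be uniformly comparable on $[0,\theta/K]$. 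This is routine from $|\partial_t g|\leq 2|Rc|\leq CK$, and lets me work throughout with a fixed cutoff $\phi(x)=\eta(d_{g(0)}(x,p)/r)$, where $\eta$ is a standard smooth bump, without worrying about the ball moving.

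For the base case $l=1$ I would consider the Bernstein-type function $G=(\beta K^2+|Rm|^2)\,|\nabla Rm|^2$ with $\beta$ chosen large enough that the bad term $|Rm|^3|\nabla Rm|^2$ produced when $\Delta$ hits the factor $(\beta K^2+|Rm|^2)$ is absorbed by the good gradient terms $-|Rm|^2|\nabla^2 Rm|^2-(\beta K^2+|Rm|^2)|\nabla^2 Rm|^2$. A direct calculation yields an inequality of the schematic form
\begin{equation*}
\partial_t G \leq \Delta G + C K\,G - \frac{G^2}{C K^2}.
\end{equation*}
I would then apply the maximum principle to $\Phi=t\,\phi^4 G$ on $B_r(p)\times[0,\theta/K]$; at an interior maximum the squared-cutoff terms $|\nabla\phi|^2/\phi^2$ and $|\Delta\phi|/\phi$ are $\lesssim 1/r^2$, and after using $K\geq \theta/T$ one obtains $\Phi\leq C K^2(1+t/r^2)$, which rearranges to $|\nabla Rm|^2(p,t)\leq C K^2(r^{-2}+t^{-1}+K)$.

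For the inductive step, assuming the estimate for every order $\leq l-1$, I would form
\begin{equation*}
G_l \;=\; \sum_{j=0}^{l} A_j\, t^j\, |\nabla^j Rm|^2
\end{equation*}
with constants $A_0\gg A_1\gg\cdots\gg A_l$ chosen so that the good term $-2A_l t^l |\nabla^{l+1} Rm|^2$ coming from the top order dominates the positive contributions of lower orders, whose cubic self-interactions are already controlled by the inductive hypothesis. Multiplying by $\phi^{2(l+1)}$ and running the same maximum-principle argument on the localized cylinder yields a pointwise bound on $t^l|\nabla^l Rm|^2(p,t)$ of the desired form. The main obstacle I expect is precisely this combinatorial bookkeeping: one must choose the cutoff exponent large enough that every cross term $|\nabla^a\phi|\,|\nabla^b Rm|$ arising from commuting $\Delta$ past $\phi^{2(l+1)}$ is absorbable into either the good $-|\nabla^{l+1}Rm|^2$ term or into $G_l$ itself, and one must simultaneously track the time weights so that the final constant depends only on $l$ and the dimension. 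Once this is arranged and the metric-comparison step at the outset is in hand, the induction closes and the stated inequality $|\nabla^l Rm(p,t)|^2\leq C_l K^2(r^{-2l}+t^{-l}+K^l)$ follows.
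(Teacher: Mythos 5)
The paper does not actually prove this lemma; it is Shi's local derivative estimate, and the paper simply points the reader to Theorem 1.4.2 of Cao--Zhu [12]. Your proposal therefore supplies a self-contained argument for a result the paper takes as a citation, and it takes a genuinely different route from the source the paper relies on. Cao--Zhu prove the estimate by constructing, for each order $k$, a barrier function $H_k$ that blows up on the parabolic boundary of $B_1(x)\times[0,T]$ and satisfies $\partial_t H_k>\Delta H_k-u^{-k}H_k^2+u^{k+2}$ with $u=r^{-2}+t^{-1}+K$, and then comparing a scaled version of $|\nabla^k Rm|^2$ to $H_k$ by the maximum principle --- this is exactly the barrier that the paper's own Proposition 7.2 imports. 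Your sketch instead uses the multiplicative-cutoff version of the Bernstein--Bando--Shi method: a Bernstein auxiliary quantity $G$ (or a weighted sum $G_l=\sum_j A_j t^j|\nabla^j Rm|^2$) multiplied by $t$ and by powers of a spatial cutoff $\phi$, with the $r^{-2l}$ term emerging transparently from $|\nabla\phi|^2/\phi^2\lesssim r^{-2}$. Both routes are standard and equivalent in strength; your version is closer in spirit to the expositions in Chow--Lu--Ni or Topping and is arguably more elementary in that it does not require constructing an explicit barrier, at the cost of heavier combinatorial bookkeeping in the inductive step (which you yourself flag).

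A few points should be repaired before this is watertight. First, the Riccati-type inequality for $G=(\beta K^2+|Rm|^2)|\nabla Rm|^2$ should read $\partial_t G\le\Delta G+CKG-G^2/(CK^4)$ rather than $G^2/(CK^2)$: since $G\le(\beta+1)K^2|\nabla Rm|^2$, one has $|\nabla Rm|^4\ge G^2/((\beta+1)^2K^4)$, and with the exponent as you wrote it the maximum principle would yield $|\nabla Rm|^2(p,t)\le C(r^{-2}+t^{-1}+K)$ with a missing factor of $K^2$. Second, the cutoff $\phi=\eta(d_{g(0)}(\cdot,p)/r)$ needs more care than ``without worrying about the ball moving'': $d_{g(0)}$ is only Lipschitz, the Hessian/Laplacian comparison for $d_{g(0)}$ under $\Delta_{g(t)}$ holds only off the cut locus and only one-sidedly, so one must invoke Calabi's trick (or a smoothing of the distance) before evaluating $\Delta\Phi\le 0$ at a putative interior maximum. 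Third, in the inductive step the terms $t^j|\nabla^a Rm||\nabla^b Rm||\nabla^j Rm|$ with $1\le a,b\le j-1$ require the inductive pointwise bound together with the observation that mixed products such as $r^{-2a}t^{-b}$ with $a+b=j$ are dominated by $r^{-2j}+t^{-j}$ via Young's inequality, and that $t^jK^j\le\theta^j$ on the allowed time interval; once those reductions are made the absorption into the good terms $-A_{j-1}t^{j-1}|\nabla^j Rm|^2$ and $-A_lt^l|\nabla^{l+1}Rm|^2$ (and into the cutoff error budget) closes as you indicate, provided $A_{j-1}\gg A_j$ and the cutoff power is large enough. None of these is a conceptual obstruction; the approach is sound.
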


Reader can see theorem 1.4.2 in [12] for the proof of this lemma.\\

Base on this lemma, we can prove the following proposition for the Ricci-de Turck flow $g$.
\begin{pro} Let $g$ be the Ricci-de Turck flow given by theorem 4.9. Then for any $m\in \mathbb{N}$, there exists a constant $C_m$ depending on $m$, $T$ and $k_0$ such that
\[
\sup_{x\in M}|\bar{\nabla}^{(m)}g|(x,t)\leq \frac{C_m}{t^{\frac{m-1}{2}}}
\]
for all $t\in[0,T]$.
\end{pro}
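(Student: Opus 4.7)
The plan is to induct on $m$ and run a Bernstein-type argument on the Ricci--de Turck system, which after expanding covariant derivatives reads
\[
\partial_t g_{ij} = g^{\alpha\beta}\bar{\nabla}_\alpha \bar{\nabla}_\beta g_{ij} + Q_{ij}(g, g^{-1}, \bar{R}, \bar{\nabla} g),
\]
a uniformly parabolic quasilinear system: parabolicity comes from theorem 5.3 (which keeps $g$ close to $\bar{g}$), and the coefficients of $Q_{ij}$ are bounded via $|Rm(\bar{g})| \leq k_0$. The case $m = 1$ is exactly theorem 5.5. Since $\bar{\nabla}\bar{g} = 0$, the initial data $g|_{t=0} = \bar{g}$ satisfies $\bar{\nabla}^{(m)} g|_{t=0} = 0$ for every $m \geq 1$, so no initial-data term will enter the inductive estimates.

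For the inductive step, assume the bound for all $k \leq m-1$. Differentiating the equation $m$ times and contracting with $\bar{\nabla}^{(m)} g$ yields an evolution inequality of the schematic form
\[
(\partial_t - g^{\alpha\beta}\bar{\nabla}_\alpha \bar{\nabla}_\beta)|\bar{\nabla}^{(m)} g|^2 \leq -c\,|\bar{\nabla}^{(m+1)} g|^2 + P_m,
\]
where $P_m$ is polynomial in $\bar{\nabla}^{(k)} g$, $k \leq m$, with coefficients bounded in terms of $\bar{R}$ and its derivatives; its highest-order contribution is a bounded multiple of $|\bar{\nabla}^{(m)} g|^2$. I would then consider the auxiliary quantity
\[
F = t^{m-1}|\bar{\nabla}^{(m)} g|^2 + A\,t^{m-2}|\bar{\nabla}^{(m-1)} g|^2,
\]
with $A$ large depending on $m$. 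The good term $-c\,t^{m-2}|\bar{\nabla}^{(m)} g|^2$ generated by the lower-order piece (after using the inductive bound on $|\bar{\nabla}^{(m-1)} g|^2$) absorbs both the $(m-1)\,t^{m-2}|\bar{\nabla}^{(m)} g|^2$ coming from $\partial_t(t^{m-1})$ and the bad $|\bar{\nabla}^{(m)} g|^2$ piece of $P_m$, leaving
\[
(\partial_t - g^{\alpha\beta}\bar{\nabla}_\alpha \bar{\nabla}_\beta) F \leq C'_m,
\]
with $C'_m$ depending only on $m$, $T$, $k_0$ and the previous induction step. A maximum-principle argument then gives $F \leq C'_m T$, equivalent to the advertised estimate on $\bar{\nabla}^{(m)} g$.

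The main obstacle is the non-compactness of $M$: lemma 5.6 demands an a priori Gaussian growth bound on $F$ before it can be applied, and obtaining such a bound on the very quantity we are estimating requires some care. I would handle this in one of two ways. Route (i): install a smooth spatial cutoff $\chi_R$ supported on a geodesic ball of radius $R$, run the Bernstein argument on $\chi_R^2 F$ as in Shi [1], bound the resulting commutator terms using the already-established bound on $|\bar{\nabla}^{(m-1)} g|$, and let $R \to \infty$. Route (ii): transfer to the genuine Ricci flow $\hat{g} = \varphi_t^* g$, apply Shi's higher-derivative estimate $|\hat{\nabla}^{(m)} Rm(\hat{g})| \leq C_m t^{-m/2}$ (lemma 7.2) to $\hat{g}$, and convert back to $\bar{\nabla}$-derivatives of $g$ by inductively controlling derivatives of the de Turck diffeomorphism $\varphi_t$ through its defining ODE, using the boundedness of $|\bar{\nabla} g|$ from theorem 5.5 to keep the two connections comparable.
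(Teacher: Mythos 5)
Your high-level strategy---a Bernstein/Shi-type induction on an interpolation quantity with $t$-weights---is in the right spirit, and your Route~(ii) is close to what the paper actually does, but your primary argument as written has a concrete gap, and the paper's actual proof organizes things differently.

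The gap: take the auxiliary quantity
\[
F = t^{m-1}\lvert\bar{\nabla}^{(m)} g\rvert^2 + A\,t^{m-2}\lvert\bar{\nabla}^{(m-1)} g\rvert^2
\]
and compute $\partial_t F$. You account for $(m-1)t^{m-2}\lvert\bar{\nabla}^{(m)}g\rvert^2$ coming from $\partial_t(t^{m-1})$ and the bad piece of $P_m$, both absorbed by the $-2cA\,t^{m-2}\lvert\bar{\nabla}^{(m)}g\rvert^2$ good term from the diffusion of the second summand. But you do not address the term $A(m-2)t^{m-3}\lvert\bar{\nabla}^{(m-1)}g\rvert^2$ coming from $\partial_t(t^{m-2})$. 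Under the inductive bound $\lvert\bar{\nabla}^{(m-1)}g\rvert^2 \leq C_{m-1}^2\,t^{-(m-2)}$, this term is of order $t^{-1}$ for $m\geq 3$, so it is not integrable at $t=0$ and the maximum-principle conclusion ``$F\leq C_m'T$'' does not close. For $m=2$ the second summand carries no $t$-power and the argument goes through; for $m\geq 3$ you either need a full telescoping sum $\sum_{j=0}^{m-1}A_j t^{m-1-j}\lvert\bar{\nabla}^{(m-j)}g\rvert^2$ (the $j=m-1$ term is unweighted, so no new singularity is created), or the multiplicative form due to Shi, $(B + \lvert\bar{\nabla}^{(m-1)}g\rvert^2)\lvert\bar{\nabla}^{(m)}g\rvert^2$, whose quartic good term dominates. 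The paper uses exactly the multiplicative version, namely $W_k=[M_kt^{-k}+\lvert\nabla^{(k)}V\rvert^2]\lvert\nabla^{(k+1)}V\rvert^2$ with $F_k=bW_kt^k$.

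The paper's route also differs structurally from your main proposal. Rather than run Bernstein directly on $\bar{\nabla}^{(m)}g$, it first pulls back Shi's curvature estimate from the genuine Ricci flow $\hat{g}=(\varphi_t^{-1})^*g$ to obtain $\lvert\nabla^{(m)}Rm(g)\rvert\leq C_m t^{-m/2}$ (no control of $\varphi_t$ is needed for this step: the pullback of the pointwise norm of a tensor is the pointwise norm of the pullback), and then does a simultaneous induction on $\lvert\nabla^{(p)}V\rvert$ and $\lvert\bar{\nabla}^{(p+1)}g\rvert$, where $V$ is the de Turck vector field, converting between the two connections through the tensor $A^k_{ij}=\Gamma^k_{ij}-\bar{\Gamma}^k_{ij}$ whose derivatives are controlled inductively. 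The Bernstein quantity is applied to $V$, not to $g$, and the non-compactness issue is handled with the Cao--Zhu barrier function $H_k$ rather than with cutoffs. The estimate on $\lvert\bar{\nabla}^{(m)}g\rvert$ is then read off from the Ricci--de Turck equation written as $\partial_t g = -2Ric + 2\nabla V$. So the paper's argument combines the ingredients of your Routes~(i) and~(ii), but the object it runs Bernstein on is $V$, and the $Rm$ estimates enter as a pre-established input rather than being re-derived.
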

\begin{proof} First of all, we define the tensor $A^k_{ij}=\Gamma^k_{ij}-\bar{\Gamma}^k_{ij}=\frac{1}{2}g^{kl}(\bar{\nabla}_ig_{jl}+\bar{\nabla}_jg_{il}-\bar{\nabla}_lg_{ij})$. So we have
\[
|A|\leq 2|g^{-1}||\bar{\nabla}g|.
\]
By theorem 5.4, we have $|g^{-1}|$ is bounded. So we have
\begin{align}
|\bar{\nabla}^{(k)}A|\leq C\sum_{l=0}^{k+1}|\bar{\nabla}^{(l)}g||\bar{\nabla}^{(k-l+1)}g| \mbox{ }\mbox{ for all } k\in \mathbb{N}
\end{align}
and a combinatorial constant $C$ depending on $k$.\\

Secondly, by (7.2), since $|\hat{\nabla}^{(m)} Rm(\hat{g}) |\leq C_m t^{-\frac{m}{2}}$, we have
\begin{align}
\varphi_t^*(|\hat{\nabla}^{(m)} Rm(\hat{g}) |)= |\nabla^{(m)} Rm|\leq \frac{C_m}{t^{\frac{m}{2}}}.
\end{align}

In what follows, $N_1$ and $I_1$ denote constants that depending only on $T$ and $K_0$. Inductively, for any $p\in\mathbb{N}$, $N_p$ denotes a constant that depending only on $N_1,\cdots, N_{p-1}$, $I_1,\cdots, I_{p-1}$, $p$, $T$, $k_0$ and $I_p$ denotes a constant that depending only on $N_1,\cdots, N_{p}$, $p$, $T$ and $k_0$; their precise values can be assumed to increase between each successive appearance.\\

Now we prove the result inductively. We claim that
\begin{align}
|\nabla^{(p)} V|\leq \frac{N_p}{t^{\frac{p}{2}}} \mbox{ and } |\bar{\nabla}^{(p+1)} g|\leq \frac{N_p}{t^{p/2}}
\mbox{ }\mbox{ for all } p\in \mathbb{N}.
\end{align}

We start with the case $p=1$. By theorem 5.5, we have $|\bar{\nabla}g|$ and $|\frac{\partial}{\partial t}g|$ are bounded. Since $g$ is radially symmetric, we have $|\bar{\nabla}\bar{\nabla}g|$ is bounded. Meanwhile, because $V_i=g_{ik}g^{pl}A^k_{pl}$, we have
\[
|\nabla V| = |(\bar{\nabla}+A)V|\leq C(|A|^2+|\bar{\nabla}\bar{\nabla}g|+|\bar{\nabla}g|^2)
\]
which is bounded. So the $p=1$ case is ture.\\

Suppose inequality (7.6) is true for $p\leq k$. We consider the evolution equation of $V$, which can be written as
\begin{align*}
\frac{\partial}{\partial t} V=\Delta V &+ g^{-2}\ast Rm \ast V + g^{-1}\ast Ric\ast V\\ &+ g^{-1}\ast \nabla V\ast V +g^{-1}\ast \bar{\nabla}g\ast Ric\ast g+ g^{-1}\bar{\nabla}g\ast \bar{\nabla} V\ast g
\end{align*}
where $\ast$ is the product of tensors. Now by Cauchy's inequality, we have
\begin{align}
\frac{\partial}{\partial t}|\nabla^{(k)} V|\leq \Delta |\nabla^{(k)} V|^2&-|\nabla^{(k+1)}V|\nonumber\\&+C\sum_{l=0}^{k}(\sum_{i_1+i_2+...+i_{l}=l}|\nabla^{(i_1)} g||\nabla^{(i_2)}g|\cdots|\nabla^{(i_l)}Ric|)^2\\
&+C(\sum_{j=0}^{k}|\nabla^{(j)}V|^2
+|\nabla^{(k+1)}g|^2).\nonumber
\end{align}

 Since $\nabla=\bar{\nabla}+A$, we have
\begin{align*}
|\nabla^{(p)} g|\leq C\sum_{j_1+...+j_{p-1}+l=p; l\geq 1}|\bar{\nabla}^{(j_1)}A||\bar{\nabla}^{(j_2)}A|\cdots |\bar{\nabla}^{(j_{p-1})}A||\bar{\nabla}^{(l)}g|.
\end{align*}
for all $p\leq k+1$. Now by (7.4) and the induction hypothesis, we have 
\[
|\bar{\nabla}^{(j_{p-1})}A|\leq N_{j_{p-1}+1}\frac{1}{t^{\frac{j_{p-1}}{2}}}.
\]
So we have
\[
|\nabla^{(p)} g|\leq \frac{I_{p}}{t^{\frac{p-1}{2}}}.
\]
for all $p\leq k$ and $|\nabla^{(k+1)} g|\leq N_{k+1}$\\

 Now we use (7.5) and the induction hypothesis, equation (7.7) will become
\begin{align}
\frac{\partial}{\partial t}|\nabla^{(k)} V|^2\leq \Delta |\nabla^{(k)} V|^2-|\nabla^{(k+1)}V|+\frac{N_{k+1}}{t^k}.
\end{align}

Here we follow the technique in the proof of Shi's derivative estimate in [12]. If we define
\[
W_k=[M_k\frac{1}{t^k}+|\nabla^{(k)}V|^2]|\nabla^{(k+1)}V|^2
\]
for some $M_k$ large enough (depending on $N_{k+1}$), by (7.8) and the Cauchy inequality, we have
\[
\frac{\partial}{\partial}W_k\leq \Delta W_k+\frac{W^2_k}{CM^2_k t^{-2k}}+CM_k^2 t^{2k+1}
\]
for some universal constant $C$. Let $F_k=bW_k t^k$. By the Cauchy inequality, we have
\[
\frac{\partial}{\partial t}F_k\leq \Delta F_k-t^kF_k^2+t^{-(k+2)}
\]
by taking $b$ small enough.\\

Now we use the barrier function constructed in theorem 1.4.2. in [12]. In [12], Zhu and Cao prove that for any $k\in\mathbb{N}$, $x\in M$ there exists a function $H_k$ defined on $B_1(x)\times [0,T]$ which satisfies\\
\ \\
1. $H_k$ increases to $+\infty$ on $(\partial B_1(x)\times [0,T])\cup B_1(x)\times \{0\}$,\\
2. $H_k\leq \alpha_k(t^{-(k+1)}+\beta_k)$ on $B_\frac{1}{2}(x)\times [0,T]$ for some constants $\alpha_k$, $\beta_k$,\\
3.
\[
\frac{\partial}{\partial t}H_k > \Delta H_k-u^{-k}H^2_k+u^{k+2}
\] 
\ \ \ \ where $u:=r^{-2}+t^{-1}$.\\

 Since $u>t^{-1}$, we have
\[
\frac{\partial}{\partial t}F_k\leq \Delta F_k-u^{-k}F_k^2+u^{k+2}
\]
By the maximum principle, we have
\[
F_k\leq H_k
\]
on $B_1(x)\times [0,T]$ which implies
\[
|\nabla^{(k+1)}V|^2(x,t)\leq b^{-1}(M_k+N_k)^{-1}t^k[\alpha_k(t^{-(k+1)}+\beta_k)]^2\leq \frac{N_{k+1}}{t^{k+1}}.
\]

We still need to prove that
\[
|\bar{\nabla}^{(k+2)}g|\leq \frac{N_{k+1}}{t^{\frac{k+1}{2}}}.
\]
By our equation, this estimate can be obtained by proving the inequality
\begin{align}
|-2\bar{\nabla}^{(k)} Ric+2\bar{\nabla}^{(k)}\nabla V|\leq \frac{N_{k+1}}{t^{\frac{k+1}{2}}}.
\end{align}
Since we have
\begin{align*}
&|\bar{\nabla}^{(k)}Ric|\leq |(\nabla-A)^{(k)}Ric|\\ \leq& C\sum_{j_1+...+j_{k-1}+l=k; l\geq 1}|\nabla^{(j_1)}A||\nabla^{(j_2)}A|\cdot\cdot |\nabla^{(j_{k-1})}A||\nabla^{(l)}Ric|\\
\leq& C\sum_{j_1+...+j_{k-1}+l\leq k; l\geq 1}|\bar{\nabla}^{(j_1)}A||\bar{\nabla}^{(j_2)}A|\cdot\cdot |\bar{\nabla}^{(j_{k-1})}A||\nabla^{(l)}Ric|\\
\leq& \frac{N_{k}}{t^{\frac{k}{2}}}
\end{align*}
and
\begin{align*}
|\bar{\nabla}^{(k)}\nabla V|=|(\nabla-A)^{(k)}\nabla V|\leq \frac{N_{k+1}}{t^{\frac{k+1}{2}}},
\end{align*}
we get (7.9) immediately.
\end{proof}

\subsection{ Proof of lemma 5.9} We use the $p$-coordinate in what follows. We fix a $t\in (t_0,t_0+T_0]$ and prove that
\begin{align}
|\bar{\nabla}^kv|(p)\rightarrow 0
\end{align}
as $p\rightarrow 1$.\\

By proposition 7.2, we have $|\bar{\nabla}^kv|$ are bounded for all $k$. Here we prove (7.10) inductively.\\

When $k=1$, we have
\[
2|\bar{\nabla}_iv|^2=\bar{\nabla}_i^2|v|^2-2\langle \bar{\nabla}_i^2v,v\rangle
\]
Since $|\langle \bar{\nabla}_i^2v,v\rangle|\leq |\bar{\nabla}_i^2v||v|$ which tends to zero by proposition 5.7 and boundedness of $ |\bar{\nabla}_i^2v|$, we should prove that $\bar{\nabla}_i^2|v|^2(p)\rightarrow 0$ as $p\rightarrow 1$. Moreover, since $|v|$ is radially symmetric, we only need to prove that $\bar{\nabla}_1^2|v|^2=(\frac{\partial}{\partial p})^2|v|^2$ tends to zero as $p\rightarrow 1$.\\

 Recall that for all $l\in\mathbb{N}$, $|v|\leq C_l (1-p)^l$ for some $C_l$. Therefore we have the difference equation
\begin{align*}
\frac{|v|^2(1-2h)-2|v|^2(1-h)+|v|^2(1)}{h^2}=(\frac{\partial}{\partial p})^2|v|^2(\xi)
\end{align*}
for some $\xi\in (1-2h,1)$ by the mean value theorem. The left hand side
\[
|\frac{|v|^2(1-2h)-2|v|^2(1-h)+|v|^2(1)}{h^2}|\leq C_2^2\frac{(2h)^4+h^4}{h^2}\rightarrow 0
\]
as $h\rightarrow 0$. So we can find a sequence $\{\xi_n|\xi\rightarrow 1\}$ such that
\[
\frac{\partial}{\partial p}^2|v|^2(\xi_n)\rightarrow 0.
\]

Now, since $|\bar{\nabla}^kv|$ is bounded for all $k$, we have $|\frac{\partial}{\partial p}^3|v|^2|$ is bounded by a constant, say $A$. For any $\varepsilon>0$, we can always find $\xi_n$ such that $|\xi_n-1|\leq \frac{\varepsilon}{2A}$ and $|\frac{\partial}{\partial p}^2|v|^2(\xi_n)|\leq \frac{\varepsilon}{2}$. Using the mean value theorem, we get for all $p\in(\xi_n, 1]$ there exists $\eta\in (\xi_n,p)$ such that
\[
|(\frac{\partial}{\partial p})^2|v|^2(p)|\leq |(\frac{\partial}{\partial p})^2|v|^2(\xi_n)|+|(\frac{\partial}{\partial p})^3|v|^2(\eta)||\xi_n-p|\leq \varepsilon.
\]
So we have $\frac{\partial}{\partial p}^2|v|^2(p)\rightarrow 0$ as $p\rightarrow 1$.\\

Suppose that we have $|\bar{\nabla}^lv|(p)\rightarrow 0$ as $p\rightarrow 1$ for all $l\leq k$. We consider the following equality
\[
2C^{2k}_k|\bar{\nabla}_i^{k+1} v|^2=\bar{\nabla}_i^{k+1}|v|^2-2\sum_{l=0}^{k-1}C^{2k}_l\langle \bar{\nabla}^{2k-l}v,\bar{\nabla}^lv\rangle.
\]
Since $|\langle \bar{\nabla}^{2k-l}v,\bar{\nabla}^lv\rangle|\leq |\bar{\nabla}^{2k-l}v||\bar{\nabla}^lv|$ tends to zero as $p\rightarrow 1$, we only need to show
\[
(\frac{\partial}{\partial p})^{k+1}|v|^2(p)\rightarrow 0
\]
as $p\rightarrow 1$. Again, by using the difference equation, we can find a sequence $\{\xi_n|\xi\rightarrow 1\}$ such that
\[
(\frac{\partial}{\partial p})^{k+1}|v|^2(\xi_n)\rightarrow 0.
\]
as $n\rightarrow \infty$. Now using the mean value theorem and boundedness of $(\frac{\partial}{\partial p})^{k+2}|v|^2$ we prove (7.10).\\

By (7.10), we can extend $|v|^2$ as a smooth function defined on $[0,1]$. Since $|v|^2\leq C_l(1-p)^l$ for all $l\in \mathbb{N}$, we can prove that for any $l\in\mathbb{N}$, there exists a constant $C'_l$ such that $|\bar{\nabla}^{(k)}v|\sqrt{\bar{g}}\leq C'_l(1-p)^l$.\\

\bibliographystyle{amsplain}

\end{document}